\theoremstyle{plain}
\newtheorem{thm}{Theorem}[section]
\newtheorem{lem}{Lemma}[section]
\theoremstyle{definition}
\newtheorem{defn}{Definition}[section]
\newtheorem{ass}{Assumption}[section]
\newtheorem{rmk}{Remark}[section]
\makeatletter\@addtoreset{equation}{section} \makeatother
\begin{document}		
	\title{Infinite Horizon Fully Coupled Nonlinear Forward-Backward Stochastic Difference Equations and Their Application to LQ Optimal Control Problems
		\thanks{Q. Meng was supported by the Key Projects of Natural Science Foundation of Zhejiang Province (No.
			Z22A013952) and the
			National Natural Science Foundation of China ( No.12271158 and No. 11871121). X. Ma is supported by the Postgraduate Research
			and Innovation Project of Huzhou University (No.2024KYCX66). Xun Li is supported by RGC of Hong Kong grants 15221621, 15226922 and 15225124, Research Centre for Quantitative Finance grant P0042708, and partially from PolyU 4-ZZP4.}}
	
	\date{}
	
	\author[a]{Xinyu Ma}
	\author[b]{Xun Li}
	\author[a]{Qingxin Meng\footnote{Corresponding author.
			\authorcr
			\indent E-mail address: mxyzfd@163.com (X. Ma), li.xun@polyu.edu.hk (X. Li), mqx@zjhu.edu.cn (Q.Meng)}}

	\affil[a]{\small{Department of Mathematical Sciences, Huzhou University, Zhejiang 313000,PR  China}}
	\affil[b]{\small{Department of Applied Mathematics, The Hong Kong Polytechnic University, Hong Kong 999077, PR China}}
	
	\maketitle

	\begin{abstract}
		This paper focuses on the study of infinite horizon fully coupled nonlinear forward-backward stochastic difference equations (FBS$\bigtriangleup$Es). Firstly, we establish a pair of priori estimates for the solutions to forward stochastic difference equations (S$\bigtriangleup$Es) and backward stochastic difference equations (BS$\bigtriangleup$Es), respectively. Then, to achieve broader applicability, we utilize a set of domination-monotonicity conditions that are more lenient than standard assumptions. Using these conditions, we apply continuation methods to prove the unique solvability of infinite horizon fully coupled FBS$\bigtriangleup$Es and derive a set of solution estimates.
		Furthermore, our results have considerable implications for a variety of related linear quadratic (LQ) problems, especially when the stochastic Hamiltonian system is consistent with FBS$\bigtriangleup$Es satisfying the introduced domination-monotonicity conditions. Thus, by solving the associated stochastic Hamiltonian system, we explicitly characterize the unique optimal control. This is the first work establishing solvability of fully coupled nonlinear FBS$\bigtriangleup$Es
		under domination-monotonicity conditions in infinite horizon discrete-time setting.
	\end{abstract}

	\textbf{Keywords}: Forward-Backward Stochastic Difference Equations;  Domination-Monotonicity Conditions;  Continuation Method; LQ Problem; Hamiltonian System; Infinite horizon
	
	\section{ Introduction}
	
	Backward stochastic differential equations (BSDEs) have drawn attention due to their standard structure and extensive applications. It is well known that the BSDEs trace their origin back to the stochastic control problem in Bismut \cite{2} in 1973. Since the work of Pardoux-Peng \cite{11} in 1990, problems related to BSDEs have witnessed significant development.
	By integrating forward stochastic differential equations (SDEs) and BSDEs into a single system, a class of coupled forward-backward stochastic differential equations (FBSDEs) is obtained. The system is called fully coupled FBSDEs if the initial and terminal values of the state are coupled.
	The coupled  FBSDEs were first introduced in the seminal work of Antonelli \cite{1}, where the existence of solutions was established over sufficiently small time horizons via a contraction mapping argument. Nevertheless, a counterexample was furnished as well to demonstrate that the same inference might not stand for a long-duration time interval despite the presence of the Lipschitz condition. In recent years, a large number of studies on solvability have been carried out, and currently, there exist two main approaches. The first approach is known as the four-step scheme approach. It integrates partial differential equations and probabilistic methods, with the stipulation that all coefficients in the equations must be non-stochastic and the coefficients of the diffusion terms should be non-degenerate. The method was first proposed by Ma-Protter-Yong \cite{9} in 1994 and was subsequently further developed
	by Delarue \cite{3} in 2002 and Zhang \cite{26} in 2006, among others. The second method, namely the method of continuation, originates from the stochastic Hamiltonian system that emerges from the process of solving control problems. It attains the solvability of the random coefficients of the FBSDE over an arbitrary time period by imposing specific monotonicity conditions on the coefficients. The method was first put forward by Hu-Peng \cite{5} in 1995, and afterwards, it was further advanced by Peng-Wu \cite{12} in 1999 and Yong \cite{24} in 1997.

	In this paper, we consider the following fully coupled infinite horizon nonlinear  FBS$\bigtriangleup $E :
	\begin{equation} \label{eq:1.1}
		\left\{\begin{aligned}
			& x_{k+1}  =b\left(k, \theta_k\right)+\sigma(k, \theta_k) \omega_k, \\
			& y_k = -f\left(k+1, \theta_k\right),\quad k \in \{0,1,2,\cdots\}, \\
			& x_0  = \Lambda (y_0),
		\end{aligned}\right.
	\end{equation}
	where $\theta_k:=(x_k,y'_{k+1},z'_{k+1}) $, $z_{k+1}:=y_{k+1}w_k,\ y'_{k+1}:=\mathbb E[y_{k+1}|\mathcal{F}_{k-1}],\ z'_{k+1}:=\mathbb E[z_{k+1}|\mathcal{F}_{k-1}] $. Additionally, $x$ and $y$ take values in some given Euclidean space $\mathbb{H}$.
	Moreover, $b$, $\sigma$ and $f$ are three given mappings which value in $\mathbb{H}$ and $\left\{w_k\right\}_{k\in \mathbb{T}_{\infty}}$ is a $\mathbb{R}$-valued martingale difference sequence. In order to simplify the notation,  for any $ \theta=(x,y',z'),$
	we define
	
	\begin{equation}
		\Gamma (k,\theta )=(f(k+1,\theta) ,b(k,\theta ),\sigma (k,\theta))
	\end{equation}
	and
	\begin{equation}
		\langle\Gamma(k, \theta), \theta\rangle=\langle f(k+1, \theta), x\rangle+\langle b(k, \theta), y'\rangle+\langle\sigma(k, \theta), z'\rangle .
	\end{equation}
	By this means, all coefficients of FBS$\bigtriangleup$E (\ref{eq:1.1}) are gathered together in $(\Lambda,\Gamma)$. For convience, we define FBS$\triangle$E (\ref{eq:1.1}) with coefficients $(\Lambda,\Gamma)$ as FBS$\triangle$E $(\Lambda,\Gamma)$.
	
	It is well known that the solvability of FBS$\bigtriangleup$Es  forms the basis of the stochastic optimal control theory as well as the stochastic game problem \cite{19}, \cite{14}. For instance, in \cite{14}, the time-inconsistent stochastic linear-quadratic control problem was investigated by making use of FBS$\bigtriangleup$Es. The study in \cite{17} on the $H^\infty$ control problem with preview was carried out by working out a Hamilton-Jacob system, which is, in essence, made up of FBS$\bigtriangleup$Es. \cite{20} conducted a study on the open-loop Stackelberg game problem, making use of the solution of FBS$\bigtriangleup$Es. The relevant application of continuous-time FBSDEs in the stochastic control problem can be located in \cite{12} as well as its references. In this paper, we study the solvability problem of the fully coupled infinite horizon nonlinear  FBS$\bigtriangleup $E\eqref{eq:1.1}, which is a very meaningful problem.
	
	There have been certain achievements in the research on linearly fully coupled FBS$\bigtriangleup$Es. Xu-Zhang-Xie \cite{21}\cite{22} investigated a class of fully coupled FBS$\bigtriangleup$Es from the perspective of linear quadratic (LQ) optimal control. In the equations they studied, the backward variables took the form of conditional expectations and fulfilled the condition that the coefficient matrices of the backward variables were degenerate. Regarding the fully coupled infinite horizon FBS$\bigtriangleup$Es, Xu-Xie-Zhang \cite{23} in 2017 and Song-Liu \cite{15} in 2020 utilized a generalized Riccati approach to systematically depict and explicitly formulate the adaptive solution. It should be noted that in 2016, Zhang-Qi \cite{27} promoted the progress of discrete-time FBS$\bigtriangleup$Es as they derived the maximum principle for discrete-time mean-field LQ control problems for the first occasion.
	
	Nevertheless, there are relatively few existing theoretical results for solvability problems related to nonlinearities, especially those in the infinite horizon case.
	In 2024, Ji-Liu \cite{6} carried out a study on the solvability problem of a class of coupled FBS$\bigtriangleup$Es and managed to acquire an existence and uniqueness theorem for nonlinear FBS$\bigtriangleup$Es within the framework of the traditional monotonicity condition.
	With respect to FBSDEs, motivated by different stochastic linear LQ problems, Yu \cite{25} put forward a more comprehensive domination-monotonicity framework. By employing the continuation method, a unique solvability outcome and a couple of estimates for general coupled FBSDEs were achieved. It should be noted that the domination-monotonicity framework constructed by Yu \cite{25} in 2022, through the introduction of diverse matrices, matrix-valued random variables and matrix-valued stochastic processes, was applicable to a wide range of stochastic LQ problems. This novel framework encompasses the prior traditional monotonicity condition and extends the traditional monotonicity condition. Furthermore, Ji-Liu \cite{6} stated that the generator $f$ in BS$\bigtriangleup$E  is only reliant on the solution at the time $k+1$ of (\ref{eq:1.1}). Consequently, the continuation method can be applied to the discrete-time case.
	
	In this paper, a class of discrete-time fully coupled nonlinear FBS$\bigtriangleup$E (\ref{eq:1.1}) is established by means of a discrete method based on the LQ problem. The domination-monotonicity framework proposed by Yu \cite{25} is discretized. The method of continuation is then applied to acquire a pair of estimates for FBS$\bigtriangleup$E (\ref{eq:1.1}) and to prove their unique solvability. We also consider two LQ problems in the application of the results. It is worth noting that there exist several distinctions between prior studies and our work. First of all, the monotonicity conditions we utilize are not the same as those in previous research. The condition presented in this paper is more comprehensive and offers a wider scope in application compared to the traditional one. By enhancing the Lipschitz condition and relaxing the monotonicity condition, it becomes more conducive to the investigation of LQ problems. Secondly, the systems under consideration are distinct. Based on the LQ optimal control problem, this paper puts forward a nonlinear system. In this system, the equations possess inverted variables in the form of conditional expectations. Moreover, these equations ensure that the coefficient matrices of the inverted variables are degenerate, thereby strictly ensuring the measurability of the state. It ought to be emphasized that, in the context of the infinite horizon, a concept of stabilizability was frequently investigated to guarantee the well-posedness of the associated cost functional (\cite{4}\cite{8}\cite{12}\cite{15}). In infinite horizon continuous FBSDEs, as discussed in   \cite{99} and \cite{18}, introducing a weight function, such as \( e^{2Kt} \), is a common method to ensure the existence and uniqueness of the solution. The weight function helps control the growth of the solution, preventing divergence over an infinite time horizon, and ensures that the solution remains bounded. It also facilitates energy estimates and the derivation of inequalities, typically using an exponential decay function like \( e^{2K t} \), which ensures the solution decays over time and avoids explosion. Additionally, the weight function allows the influence of different parts of the equation to vary over time, reducing the impact of the solution in the distant future. By transforming the problem into a finite weighted space, this approach makes it possible to handle the challenges of infinite time horizons effectively, ensuring that the solution behaves in a well-defined and reasonable manner. In summary, the introduction of a weight function, as noted in \cite{99} and \cite{18}, is crucial for guaranteeing the existence and uniqueness of solutions in infinite horizon problems.
	
	In this paper, we develop the weight function approach for discrete-time systems, extending the method introduced in   \cite{99} and  \cite{18} for continuous systems and investigates the existence and uniqueness of solutions for discrete-time FBS$\bigtriangleup $E \eqref{eq:1.1} over an infinite-time horizon.  This method proves to be instrumental in ensuring the stability and tractability of solutions, especially in the context of infinite time horizons. Furthermore, the paper applies this approach to the infinite horizon discrete-time stochastic linear quadratic (LQ) optimal control problem, where the goal is to determine the optimal control policy that minimizes a given cost functional. {\color{blue} To derive the optimal control policy explicitly, we employ the stochastic maximum principle (SMP)(\cite{36}\cite{62}\cite{60}\cite{35}\cite{37}\cite{61}), which extends the continuous-time framework of Wu \cite{50} and Shi-Wu \cite{51} to discrete-time systems. It provides necessary and sufficient conditions for optimality via the Hamiltonian system. Specifically, the optimal control is characterized by minimizing the Hamiltonian function associated with the cost functional, while the adjoint equations govern the evolution of the costate variables. This framework aligns with our domination-monotonicity conditions, ensuring the measurability and stability of the optimal control.} The introduction of a weight function allows us to adaptively manage the influence of various components of the system, ensuring that the cost functional remains well-defined and that the solution to the control problem exists and is unique.
	By leveraging the weight function to handle the infinite time horizon, we establish corresponding theoretical results for the existence and uniqueness of solutions in both the FBSDEs and the stochastic LQ control problem. {\color{blue}Notably, the weight function technique also facilitates the application of the SMP in infinite horizon settings. By ensuring the decay of solutions (via terms like $e^{2Kt}$), the weighted space framework guarantees that the Hamiltonian function and adjoint equations remain well-defined over infinite time, thereby validating the SMP-based optimality conditions.} The results demonstrate that the use of weight functions not only ensures the well-posedness of these problems but also offers a systematic way to address the difficulties introduced by the infinite horizon. This approach lays the foundation for further research into stochastic control problems in infinite time domains, providing insights into the behavior of such systems under long-term dynamic optimization. In recent years, theoretical research on stochastic control problems over infinite time horizons has achieved significant progress. Notably, the seminal work of Ji-Zhang \cite{7} laid a crucial foundation for  recursive control problems within discrete-time frameworks.
	
	The rest of this paper is organized as follows. In Section \ref{sec:2}, we begin by defining a number of notations and providing two lemmas related to infinite horizon S$\bigtriangleup $Es and BS$\bigtriangleup $Es. In Section \ref{sec:3}, we utilize the continuation method in the research of FBS$\bigtriangleup $E \eqref{eq:1.1} within the domination-monotonicity conditions. In Theorem \ref{thm:4.1}, we display the unique solvability result and a pair of estimates. In Section 4, applying the obtained results, we carry out an investigation into two LQ optimal control problems and verify the existence and uniqueness of the optimal control.

	\section{Notations and Preliminaries }\label{sec:2}
	\subsection{Notations}	
	Let $\mathbb{T}_{\infty}$ and $\mathbb{T}_{N}$ denote the sets $\{0,1,2,\cdots\}$ and $\{0,1,2,\cdots,N\}$ respectively, where $N$ is a given positive integer representing the time horizon. Consider a complete filtered probability space $(\Omega, \mathcal{F}, \mathbb{F}, \mathbb{P})$, where the filtration $\mathbb{F}=\left\{\mathcal{F}_k : k=0,1,2, \cdots\right\}$. Let $\left\{w_k\right\}_{k\in \mathbb{T}_{\infty}}$ be a $\mathbb{R}$-valued martingale difference sequence defined on a probability space $(\Omega, \mathcal{F}, P)$, and
	$$
	\mathbb{E}\left[w_{k+1} \mid \mathcal{F}_k\right]=0, \quad \mathbb{E}\left[(w_{k+1})^{2} \mid \mathcal{F}_k\right]=1.
	$$
	In this paper, we suppose that $\mathcal{F}_k$ is the $\sigma$-algebra generated by $\left\{x_0, w_l, l=\right.$ $0,1, \ \cdots, k\}$. For convenience, $\mathcal{F}_{-1}$ denotes $\left \{ \emptyset,\Omega  \right \}$.

	Let $\mathbb{R}^n$ be the $n$-dimensional Euclidean space with the norm $|\cdot|$ and the inner product $\langle\cdot,\cdot\rangle$. Let $\mathbb{S}^n$ be the set of all symmetric matrices in $\mathbb{R}^{n\times n}$. Let $\mathbb{R}^{n\times m}$ be the collection of all $n\times m$ matrices with the norm $|A|=\sqrt{\textrm{tr}(AA^\top)}$ and the inner product $	\left\langle A,B\right\rangle = \textrm{tr}(AB^\top)$,  for any $ A,B\in \mathbb{R}^{n\times m}$.

	Let $\mathbb H$ be a Hilbert space with norm $\|\cdot\|_\mathbb H$, $\rho\in\mathbb{R}$, then we present some notations as follows:

	$\bullet$ $L^{2} _{\mathcal{F}_{k} }(\Omega ; \mathbb H)$: the set of all $\mathbb H$-valued  $\mathcal{F}_{k}$-measurable random variables $\eta$ satisfying
	
	\begin{equation}
		\|\eta\|_{L^{2} _{\mathcal{F}_{k} }(\Omega ;\mathbb{H})} :=\Big[\mathbb{E}\|\eta \|_\mathbb H^{2} \Big]^{\frac{1}{2}}<\infty .\nonumber
	\end{equation}
	
	$\bullet$ $L^{\infty} _{\mathcal{F}_{k} }(\Omega ;\mathbb H)$: the set of all $\mathbb H$-valued
	$\mathcal{F}_{k}$-measurable essentially bounded variables.
	
	$\bullet$ $L_{\mathbb{F}}^{2,\rho}\left(\mathbb{T}_{\infty}; \mathbb{H}\right)$: the set of all
	$\mathbb{H}$-valued  stochastic process $g=\{(g_0,g_1,g_2,\cdots)|g_k$ is $\mathcal{F}_{k-1}$-measurable, $k=0,1,2, \cdots$\} satisfying
	
	\begin{equation}
		\|g(\cdot)\|_{L_{\mathbb{F}}^{2,\rho}\left(\mathbb{T}_{\infty}; \mathbb{H}\right)} :=\bigg [\mathbb{E}\bigg (\sum_{k=0}^{\infty} e^{-\rho k}\|g_k\|_{\mathbb{H}}^{2}\bigg ) \bigg]^{\frac{1}{2}}<\infty.\nonumber
	\end{equation}
	
	$\bullet$ $L_{\mathbb{F}}^{\infty}(\mathbb{T}_{\infty}; \mathbb{H})$: the set of all
	$\mathbb{H}$-valued  essentially bounded stochastic processes $h(\cdot)=\{(h_0,h_1,h_2,\cdots)|h_k$ is $\mathcal{F}_{k-1}$-measurable, $ k=0,1,2, \cdots$\}.
	
	$\bullet$ $\mathbb{U}$ : the set of all admissible controls
	$$
	\mathbb{U}:=L_{\mathbb{F}}^{2,\rho}\left(\mathbb{T}_{\infty}; \mathbb{R}^{m}\right).
	$$

	$\bullet$ $N^{2,\rho}(\mathbb{T}_{\infty};\mathbb{R}^{2n} ):= L_{\mathbb{F}}^{2,\rho}\left(\mathbb{T}_{\infty}; \mathbb{R}^n\right)\times L_{\mathbb{F}}^{2,\rho}\left(\mathbb{T}_{\infty}; \mathbb{R}^n\right)$. For any $(x(\cdot),y(\cdot)) \in N^{2,\rho}(\mathbb{T}_{\infty};\mathbb{R}^{2n} )$, its norm is given by
	\begin{equation}
		\begin{aligned}
			\|(x(\cdot),y(\cdot)) \|_{N^{2,\rho}(\mathbb{T}_{\infty};\mathbb{R}^{2n} )}:= \left \{ \mathbb{E}\bigg [\displaystyle\sum_{k=0}^{\infty}e^{-\rho k}\left( |x_k|^{2}+|y_k|^{2}\right)  \bigg ] \right \}^{\frac{1}{2}}.\nonumber
		\end{aligned}
	\end{equation}
	
	$\bullet$ $N^{2,\rho}(\mathbb{T}_{\infty};\mathbb{R}^{3n} ):= L_{\mathbb{F}}^{2,\rho}\left(\mathbb{T}_{\infty}; \mathbb{R}^n\right)\times L_{\mathbb{F}}^{2,\rho}\left(\mathbb{T}_{\infty}; \mathbb{R}^n\right)\times L_{\mathbb{F}}^{2,\rho}\left(\mathbb{T}_{\infty}; \mathbb{R}^n\right)$.
	For any $\phi(\cdot) =(\varphi(\cdot)  ,\psi(\cdot) ,\gamma(\cdot) ) \in N^{2,\rho}(\mathbb{T}_{\infty};\mathbb{R}^{3n} )$, its norm is given by
	\begin{equation}
		\begin{aligned}
			\|\phi(\cdot) \|_{{N}^{2,\rho}(\mathbb{T}_{\infty};\mathbb{R}^{3n} )}:=\left \{ \mathbb{E}\bigg [\sum_{k=0}^{\infty}e^{-\rho k}\left( |\varphi_k|^{2}+|\psi_k|^{2}+|\gamma _k|^{2}\right)  \bigg ]   \right \}^{\frac{1}{2}}.
		\end{aligned}\nonumber
	\end{equation}

	$\bullet$ $\mathcal{H}^{2,\rho}(\mathbb{T}_{\infty}):=\ L^{2} _{\mathcal{F}_{-1} }(\Omega ; \mathbb{R}^{n})\times N^{2,\rho}(\mathbb{T}_{\infty};\mathbb{R}^{3n} )$. For any $(\xi  ,\phi(\cdot))\in \mathcal{H}^{2,\rho}(\mathbb{T}_{\infty})$, its norm is given by
	\begin{equation}
		\begin{aligned}
			\|(\xi ,\phi(\cdot) )\|_{\mathcal{H}^{2,\rho}(\mathbb{T}_{\infty})}:=\left \{ \|\xi \|^{2}_{ L^{2} _{\mathcal{F}_{-1} }(\Omega ; \mathbb{R}^{n})}+\|\phi(\cdot) \|^{2}_{N^{2,\rho}(\mathbb{T}_{\infty};\mathbb{R}^{3n} )} \right \}^{\frac{1}{2}}.\nonumber
		\end{aligned}
	\end{equation}
	\subsection{Infinite horizon S$\bigtriangleup $E }	
	Some basic results on infinite horizon S$\bigtriangleup $Es are presented below. As a first step, we study the following infinite horizon S$\bigtriangleup $E:
	\begin{equation}\label{eq:2.1}        		
		\left\{\begin{aligned}               	 	
			x_{k+1}  =&b\left(k, x_{k}\right)+\sigma(k, x_k) \omega_k, \\
			x_0  =&\eta , \quad k\in \mathbb{T}_{\infty}.
		\end{aligned}\right.
	\end{equation}
	For convenience, we denote  S$\triangle$E \eqref{eq:2.1} with coefficients $(b,\sigma,\eta)$ as S$\triangle$E $(b,\sigma,\eta).$	Here we provide the following definition of the solution to infinite horizon S$\triangle$E $(b,\sigma,\eta)$.
	\begin{defn}  For some $ \rho\in \mathbb R,$  a $\mathbb{R}^n$-valued  stochastic process  $x(\cdot)\in  L^{2,\rho}_{\mathbb{F} }(\mathbb{T}_{\infty};\mathbb{R}^n )$ is called a  solution of  S$\triangle$E $(b,\sigma,\eta)$  if (i) $b(\cdot, x(\cdot)) \in  L^{2,\rho}_{\mathbb{F} }(\mathbb{T}_{\infty};\mathbb{R}^n )$, $\sigma(\cdot, x(\cdot)) \in  L^{2,\rho}_{\mathbb{F} }(\mathbb{T}_{\infty};\mathbb{R}^n )$; (ii)  for any $k\in \mathbb{T}_{\infty}$,  \eqref{eq:2.1} holds.
	\end{defn}
	The coefficients $(b,\sigma,\eta )$ are assumed to satisfy the following conditions:
	
	\begin{ass}\label{ass:2.1}
		$\eta \in L^{2} _{\mathcal{F}_{-1} }(\Omega ; \mathbb{R}^{n})$ and $(b,\sigma)$ are two  given random mappings
		\begin{equation}
			\begin{aligned}
				&b:\Omega\times\mathbb{T}_{\infty}\times  \mathbb{R}^n  \to \mathbb{R}^n ,\\
				&\sigma :\Omega\times\mathbb{T}_{\infty}\times  \mathbb{R}^n  \to \mathbb{R}^{n}
			\end{aligned}\nonumber
		\end{equation}
		satisfying:
		\\(i) For any $x \in \mathbb{R}^n$, $b(k,x) $ and $\sigma(k,x) $ are $\mathcal{F}_{k-1}$-measurable, $ k \in \mathbb{T}_{\infty}$;
		\\(ii)  $ b(\cdot, 0) \in  L^{2,\rho}_{\mathbb{F} }(\mathbb{T}_{\infty};\mathbb{R}^n )$  and $\sigma(\cdot, 0)\in  L^{2,\rho}_{\mathbb{F} }(\mathbb{T}_{\infty};\mathbb{R}^n );$
		\\(iii) The mappings $b$ and $\sigma$  are uniformly Lipschitz continuous with respect to $x$, i.e., there exists a constant $L_{1}>0$ such that the inequalities
		\begin{equation}
			|b(k,x)-b(k,\bar{x})|\le L_1 |x-\bar{x}|	,	\quad \quad\forall k \in \mathbb{T}_{\infty}\nonumber
		\end{equation}
		and
		\begin{equation}
			|\sigma(k,x)-\sigma (k,\bar{x})|\le L_1|x-\bar{x}|	, \quad \quad \forall k \in \mathbb{T}_{\infty}\nonumber
		\end{equation}
		hold for any $x,\bar{x}\in \mathbb{R}^n$.
		
	\end{ass}

	\begin{lem}\label{lem:2.1}
		Suppose that Assumption~\ref{ass:2.1} holds, and the constant \(\rho\) satisfies \(\rho > \ln(4L_1^2)\).
		{\color{blue}Assume further that the S$\triangle$E \((b, \sigma, \eta)\)  admits a solution
			\(x(\cdot)  \in L^{2,\rho}_{\mathbb{F}}(\mathbb{T}_{\infty}; \mathbb{R}^n)\).}
		Then there exists a constant \(\mathcal{C} > 0\), depending only on \(L_1\) and \(\rho\), such that
		\begin{equation}\label{eq:2.3}
			\sum_{k=0}^{\infty} e^{-\rho k}\,
			\mathbb{E}\bigl[\lvert x_{k}\rvert^{2}\bigr]
			\;\le\;
			\mathcal{C}\,
			\mathbb{E}\Biggl[
			\lvert \eta\rvert^{2}
			+ \sum_{k=0}^{\infty} e^{-\rho k}\,\bigl(\lvert b(k,0)\rvert^{2} + \lvert \sigma(k,0)\rvert^{2}\bigr)
			\Biggr].
		\end{equation}
		Moreover, if \(\bar x(\cdot)\) is the solution of  S$\triangle$E \((\bar b,\bar\sigma,\bar\eta)\) satisfying the same assumptions, then
		\begin{equation}\label{eq:2.4}
			\sum_{k=0}^{\infty} e^{-\rho k}\,
			\mathbb{E}\bigl[\lvert x_{k}-\bar x_{k}\rvert^{2}\bigr]
			\;\le\;
			\mathcal{C}\,
			\mathbb{E}\Biggl[
			\lvert \eta-\bar\eta\rvert^{2}
			+ \sum_{k=0}^{\infty} e^{-\rho k}\,
			\bigl(\lvert b(k,\bar x_{k})-\bar b(k,\bar x_{k})\rvert^{2}
			+\lvert \sigma(k,\bar x_{k})-\bar\sigma(k,\bar x_{k})\rvert^{2}\bigr)
			\Biggr].
		\end{equation}
	\end{lem}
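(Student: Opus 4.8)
The plan is to reduce both estimates to a single scalar recursion for the weighted second moments and then exploit the gap condition $\rho>\ln(4L_1^2)$ to sum a convergent geometric series. First I would derive a one-step second-moment identity. Writing $x_{k+1}=b(k,x_k)+\sigma(k,x_k)w_k$, I note that both $b(k,x_k)$ and $\sigma(k,x_k)$ are $\mathcal{F}_{k-1}$-measurable, by Assumption~\ref{ass:2.1}(i) together with the $\mathcal{F}_{k-1}$-measurability of $x_k$, while the martingale difference sequence obeys $\mathbb{E}[w_k\mid\mathcal{F}_{k-1}]=0$ and $\mathbb{E}[w_k^2\mid\mathcal{F}_{k-1}]=1$. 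Expanding $|x_{k+1}|^2$ and conditioning on $\mathcal{F}_{k-1}$, the cross term drops out and I obtain
\[
\mathbb{E}\bigl[|x_{k+1}|^2\mid\mathcal{F}_{k-1}\bigr]=|b(k,x_k)|^2+|\sigma(k,x_k)|^2,
\]
so that, after taking expectations, $\mathbb{E}[|x_{k+1}|^2]=\mathbb{E}[|b(k,x_k)|^2]+\mathbb{E}[|\sigma(k,x_k)|^2]$.

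Next I would apply the Lipschitz bounds of Assumption~\ref{ass:2.1}(iii) in the form $|b(k,x_k)|^2\le 2L_1^2|x_k|^2+2|b(k,0)|^2$ and analogously for $\sigma$. Setting $a_k:=\mathbb{E}[|x_k|^2]$ and $c_k:=2\mathbb{E}[|b(k,0)|^2+|\sigma(k,0)|^2]$, this produces the linear recursion $a_{k+1}\le 4L_1^2 a_k+c_k$ with $a_0=\mathbb{E}[|\eta|^2]$. Multiplying by the weight $e^{-\rho(k+1)}$ and writing $\lambda:=4L_1^2 e^{-\rho}$, I get $e^{-\rho(k+1)}a_{k+1}\le\lambda\,e^{-\rho k}a_k+e^{-\rho(k+1)}c_k$. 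Summing over $k\ge 0$ and setting $S:=\sum_{k=0}^\infty e^{-\rho k}a_k$, the left-hand side telescopes to $S-a_0$, yielding $S-a_0\le\lambda S+e^{-\rho}\sum_{k=0}^\infty e^{-\rho k}c_k$. Here the hypothesis $\rho>\ln(4L_1^2)$ is exactly what forces $\lambda<1$, allowing me to absorb $\lambda S$ into the left side and conclude $S\le(1-\lambda)^{-1}\bigl(a_0+e^{-\rho}\sum_k e^{-\rho k}c_k\bigr)$, which is \eqref{eq:2.3} with $\mathcal{C}=(1-\lambda)^{-1}\max\{1,2e^{-\rho}\}$, depending only on $L_1$ and $\rho$.

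For the stability estimate \eqref{eq:2.4} I would run the identical argument on the difference process $\hat x_k:=x_k-\bar x_k$. It solves $\hat x_{k+1}=B_k+\Sigma_k w_k$ with $B_k:=b(k,x_k)-\bar b(k,\bar x_k)$ and $\Sigma_k:=\sigma(k,x_k)-\bar\sigma(k,\bar x_k)$, both $\mathcal{F}_{k-1}$-measurable, so the same conditioning gives $\mathbb{E}[|\hat x_{k+1}|^2]=\mathbb{E}[|B_k|^2]+\mathbb{E}[|\Sigma_k|^2]$. The only new ingredient is the splitting $B_k=[b(k,x_k)-b(k,\bar x_k)]+[b(k,\bar x_k)-\bar b(k,\bar x_k)]$, whose first bracket is controlled by $L_1|\hat x_k|$ via Lipschitz continuity and whose second bracket is the coefficient discrepancy appearing on the right of \eqref{eq:2.4}; the same splitting applies to $\Sigma_k$. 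This reproduces the recursion $\hat a_{k+1}\le 4L_1^2\hat a_k+\hat c_k$ with $\hat a_k=\mathbb{E}[|\hat x_k|^2]$ and $\hat c_k=2\mathbb{E}[|b(k,\bar x_k)-\bar b(k,\bar x_k)|^2+|\sigma(k,\bar x_k)-\bar\sigma(k,\bar x_k)|^2]$, and the weighted summation step is then verbatim the one above.

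The one genuinely delicate point, and the place I would be most careful, is the absorption step. The rearrangement $S-a_0\le\lambda S+\cdots$ into $(1-\lambda)S\le\cdots$ is legitimate only because $S<\infty$ is known a priori: this is precisely the role of the standing assumption that a solution $x(\cdot)\in L^{2,\rho}_{\mathbb{F}}(\mathbb{T}_{\infty};\mathbb{R}^n)$ already exists, and likewise $\bar x(\cdot)$ for \eqref{eq:2.4}. Without that finiteness one could not subtract $\lambda S$ from both sides, and the subcritical weighting $\lambda<1$ supplied by $\rho>\ln(4L_1^2)$ would yield no information. Everything else is a routine telescoping of a first-order linear inequality against a geometric weight.
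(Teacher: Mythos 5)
Your proposal is correct and follows essentially the same route as the paper: the one-step second-moment identity via conditioning on $\mathcal{F}_{k-1}$, the Lipschitz bound yielding $a_{k+1}\le 4L_1^2a_k+c_k$, the weighted geometric summation, and absorption under $4L_1^2e^{-\rho}<1$, with the same splitting for the difference estimate. Your explicit remark that the a priori finiteness of $S$ (i.e.\ the standing assumption $x(\cdot)\in L^{2,\rho}_{\mathbb{F}}(\mathbb{T}_{\infty};\mathbb{R}^n)$) is what legitimizes subtracting $\lambda S$ from both sides is a welcome clarification of a step the paper performs silently.
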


	\begin{proof}
		we first prove the estimate \eqref{eq:2.3}. According to the S$\bigtriangleup$E equation, \(x_k\) satisfies
		\[
		x_{k+1} = b(k, x_k) + \sigma(k, x_k) \omega_k.
		\]
		Taking the square and taking expectation, we have
		\[
		\mathbb{E}[|x_{k+1}|^2] = \mathbb{E}[|b(k, x_k) + \sigma(k, x_k) \omega_k|^2].
		\]
		By expanding the square, we can get
		\[
		\mathbb{E}[|x_{k+1}|^2] = \mathbb{E}[|b(k, x_k)|^2] + \mathbb{E}[|\sigma(k, x_k) \omega_k|^2] + 2 \mathbb{E}[\langle b(k, x_k), \sigma(k, x_k) \omega_k \rangle].
		\]
		Since \(\mathbb{E}[\omega_k|\mathcal{F}_{k-1}] = 0\) and \(\mathbb{E}[|\omega_k|^2|\mathcal{F}_{k-1}] = 1\), we have
		\begin{equation*}
			\begin{split}
				\mathbb{E}\left[|\sigma(k, x_k) \omega_k|^2\right]=	\mathbb{E}\left[\mathbb{E}[|\sigma(k, x_k) \omega_k|^2|\mathcal{F}_{k-1}]\right]=\mathbb{E}\left[|\sigma(k, x_k)|^2\mathbb{E}[|\omega_k|^2|\mathcal{F}_{k-1}] \right]=\mathbb{E}[|\sigma(k, x_k)|^2],
			\end{split}
		\end{equation*}
		\begin{equation*}
			\mathbb{E}[\langle b(k, x_k), \sigma(k, x_k) \omega_k \rangle=\mathbb{E}\left[\mathbb{E}[b^{\top}(k, x_k)\sigma(k, x_k) \omega_k|\mathcal{F}_{k-1}]\right]=\mathbb{E}\left[b^{\top}(k, x_k)\sigma(k, x_k)\mathbb{E}[ \omega_k|\mathcal{F}_{k-1}]\right]=0.
		\end{equation*}
		Therefore\[
		\mathbb{E}[|x_{k+1}|^2] = \mathbb{E}[|b  (k, x_k)|^2] + \mathbb{E}[|\sigma(k, x_k)|^2].
		\]
		By using the Lipschitz condition, it is easy to see that
		
		\begin{equation}\label{eq:2.5}
			\begin{aligned}
				\mathbb{E}[|x_{k+1}|^2] = \mathbb{E}[|b(k, x_k)|^2]+ \mathbb{E}[|\sigma(k, x_k)|^2]
				\leq  2\mathbb{E}[|b(k,0)|^2] + 2\mathbb{E}[|\sigma(k,0)|^2] + 4L_{1}^2 \mathbb{E}[|x_k|^2].
			\end{aligned}
		\end{equation}
		Multiplying this inequality by \(e^{-\rho(k+1)}\) and summing from \(k=0\) to \(\infty\), we get
		\[
		\sum_{k=0}^{\infty} e^{-\rho(k+1)}\,
		\mathbb{E}[\lvert x_{k+1}\rvert^{2}]
		\;\le\;
		2\,e^{-\rho}
		\sum_{k=0}^{\infty} e^{-\rho k}\,
		\mathbb{E}\bigl[\lvert b(k,0)\rvert^{2} + \lvert\sigma(k,0)\rvert^{2}\bigr]
		\;+\;
		4\,L_{1}^{2}\,e^{-\rho}\,A,
		\]
		where
		\[
		A \;=\;
		\sum_{k=0}^{\infty} e^{-\rho k}\,
		\mathbb{E}\bigl[\lvert x_{k}\rvert^{2}\bigr].
		\]
		Notice that
		\[
		A
		\;=\;
		\mathbb{E}\bigl[\lvert x_{0}\rvert^{2}\bigr]
		\;+\;
		\sum_{k=0}^{\infty} e^{-\rho(k+1)}\,
		\mathbb{E}\bigl[\lvert x_{k+1}\rvert^{2}\bigr].
		\]
		Substituting the previous bound yields
		\[
		A \;\le\;
		\mathbb{E}\bigl[\lvert\eta\rvert^{2}\bigr]
		\;+\;
		2\,e^{-\rho}
		\sum_{k=0}^{\infty} e^{-\rho k}\,
		\mathbb{E}\bigl[\lvert b(k,0)\rvert^{2} + \lvert\sigma(k,0)\rvert^{2}\bigr]
		\;+\;
		4\,L_{1}^{2}\,e^{-\rho}\,A.
		\]
		Since the condition \(\rho>\ln(4L_{1}^{2})\) guarantees \(1 - 4L_{1}^{2}\,e^{-\rho}>0\), we can rearrange to obtain
		\[
		A
		\;\le\;
		\frac{1}{1 - 4L_{1}^{2}e^{-\rho}}
		\,\mathbb{E}\bigl[\lvert\eta\rvert^{2}\bigr]
		\;+\;
		\frac{2\,e^{-\rho}}{1 - 4L_{1}^{2}e^{-\rho}}
		\sum_{k=0}^{\infty} e^{-\rho k}\,
		\mathbb{E}\bigl[\lvert b(k,0)\rvert^{2} + \lvert\sigma(k,0)\rvert^{2}\bigr].
		\]
		Setting
		\[
		\mathcal{C}
		=
		\frac{1 + 2\,e^{-\rho}}{1 - 4\,L_{1}^{2}\,e^{-\rho}}
		\]
		gives exactly the desired estimate \eqref{eq:2.3}.
		
		For estimate \eqref{eq:2.4}, define  \(\widehat x_{k} = x_{k} - \bar x_{k}\).  Subtracting the two equations yields
		\[
		\widehat x_{k+1}
		= b\bigl(k,x_{k}\bigr) - \bar b\bigl(k,\bar x_{k}\bigr)
		+ \bigl[\sigma(k,x_{k}) - \bar\sigma(k,\bar x_{k})\bigr]\,\omega_{k}.
		\]
		By the same squaring, expectation, Lipschitz, weighting and summation argument as above, one derives \eqref{eq:2.4}.   The proof is complete.
	\end{proof}

	{\color{blue}
		\begin{thm}\label{thm:2.1}
			Under Assumption \ref{ass:2.1}, choose the constant $\rho$ so that $\rho > \ln(4L^2_1)$. Then the S$\bigtriangleup $E $(b,\sigma,\eta)$ has a unique solution $x(\cdot)\in L^{2,\rho}_{\mathbb{F} }(\mathbb{T}_{\infty};\mathbb{R}^n )$.
		\end{thm}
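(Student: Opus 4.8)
The plan is to construct the solution explicitly by forward recursion, verify that it belongs to the weighted space using the growth inequality already obtained in the proof of Lemma~\ref{lem:2.1}, and then read off uniqueness from the difference estimate \eqref{eq:2.4}. First I would define the candidate solution pathwise: set $x_0 = \eta$ and, for $k\in\mathbb{T}_\infty$, let
\[
x_{k+1} = b(k,x_k) + \sigma(k,x_k)\,\omega_k .
\]
Since $\eta$ is $\mathcal{F}_{-1}$-measurable and, by Assumption~\ref{ass:2.1}(i), $b(k,\cdot)$ and $\sigma(k,\cdot)$ are $\mathcal{F}_{k-1}$-measurable while $\omega_k$ is $\mathcal{F}_k$-measurable, an easy induction shows that each $x_k$ is $\mathcal{F}_{k-1}$-measurable, exactly as required for membership in $L^{2,\rho}_{\mathbb{F}}(\mathbb{T}_\infty;\mathbb{R}^n)$. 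A second induction, combining the Lipschitz bound of Assumption~\ref{ass:2.1}(iii) with $\eta\in L^2$ and $b(k,0),\sigma(k,0)\in L^2$, guarantees $\mathbb{E}[|x_k|^2]<\infty$ for every finite $k$, so all quantities below are well defined.

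The main point is to show that this process actually lies in the weighted space. Writing $a_k=\mathbb{E}[|x_k|^2]$ and $c_k=2\mathbb{E}[|b(k,0)|^2]+2\mathbb{E}[|\sigma(k,0)|^2]$, the same squaring-and-conditioning computation that produced \eqref{eq:2.5} gives the recursion
\[
a_{k+1}\le 4L_1^2\,a_k + c_k .
\]
Iterating yields $a_k\le (4L_1^2)^k a_0 + \sum_{j=0}^{k-1}(4L_1^2)^{k-1-j}c_j$, and after multiplying by $e^{-\rho k}$ and summing over $k$, interchanging the order of summation turns the double sum into a geometric series with ratio $4L_1^2 e^{-\rho}$. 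Here the hypothesis $\rho>\ln(4L_1^2)$ is decisive: it forces $4L_1^2 e^{-\rho}<1$, so the series converges and one obtains
\[
\sum_{k=0}^\infty e^{-\rho k} a_k \le \frac{1}{1-4L_1^2 e^{-\rho}}\,\mathbb{E}[|\eta|^2] + \frac{e^{-\rho}}{1-4L_1^2 e^{-\rho}}\sum_{k=0}^\infty e^{-\rho k} c_k < \infty ,
\]
the right-hand side being finite by Assumption~\ref{ass:2.1}(ii). Hence $x(\cdot)\in L^{2,\rho}_{\mathbb{F}}(\mathbb{T}_\infty;\mathbb{R}^n)$, and the Lipschitz bounds $|b(k,x_k)|\le|b(k,0)|+L_1|x_k|$ and $|\sigma(k,x_k)|\le|\sigma(k,0)|+L_1|x_k|$ then show $b(\cdot,x(\cdot)),\sigma(\cdot,x(\cdot))\in L^{2,\rho}_{\mathbb{F}}(\mathbb{T}_\infty;\mathbb{R}^n)$, confirming every requirement of the definition of a solution.

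Uniqueness follows directly from estimate \eqref{eq:2.4}. If $x(\cdot)$ and $\bar x(\cdot)$ are two solutions in $L^{2,\rho}_{\mathbb{F}}(\mathbb{T}_\infty;\mathbb{R}^n)$ of the \emph{same} equation $(b,\sigma,\eta)$, then taking $\bar b=b$, $\bar\sigma=\sigma$, $\bar\eta=\eta$ in \eqref{eq:2.4} makes the entire right-hand side vanish, whence $\sum_{k} e^{-\rho k}\mathbb{E}[|x_k-\bar x_k|^2]=0$ and the two solutions coincide. Alternatively, pathwise uniqueness is immediate, since both solutions obey the same recursion from the same $x_0=\eta$.

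The step I expect to be the main obstacle is the integrability verification in the existence part. The a priori estimate \eqref{eq:2.3} cannot itself yield existence, because it already presupposes a solution in the weighted space; the summability must therefore be established from scratch through the forward iteration and the geometric-series bound, which is precisely where the threshold $\rho>\ln(4L_1^2)$ enters and makes the argument work.
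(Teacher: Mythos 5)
Your proof is correct, but it takes a genuinely different route from the paper. The paper proves Theorem~\ref{thm:2.1} by a Banach fixed-point argument: it defines the operator $(Tx)_0=\eta$, $(Tx)_{k+1}=b(k,x_k)+\sigma(k,x_k)w_k$ on the weighted Hilbert space $L^{2,\rho}_{\mathbb{F}}(\mathbb{T}_\infty;\mathbb{R}^n)$, checks that $T$ maps the space into itself, and shows $T$ is a strict contraction with constant $\sqrt{2}\,L_1 e^{-\rho/2}<1$. You instead exploit the fact that the equation is an explicit forward recursion, so the solution is determined pathwise and uniquely from $x_0=\eta$; the only substantive issue is membership in the weighted space, which you settle by iterating the scalar recursion $a_{k+1}\le 4L_1^2 a_k+c_k$ and summing the resulting geometric series with ratio $4L_1^2e^{-\rho}<1$ (the interchange of summation is justified by nonnegativity). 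Your approach is more elementary and makes pathwise uniqueness transparent, and it correctly isolates the real content of the theorem — integrability rather than existence — including the accurate observation that Lemma~\ref{lem:2.1} cannot be used for existence since it presupposes a solution in the space. The paper's contraction argument buys uniformity with the backward case (Theorem~\ref{thm:2.2}, where no forward recursion is available) and with the continuous-time literature, at the cost of somewhat heavier machinery for what is here an explicit iteration. Both arguments use the threshold $\rho>\ln(4L_1^2)$ in essentially the same place, and your uniqueness step via estimate \eqref{eq:2.4} matches the paper's.
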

		
		\begin{proof}
			
			Define the space
			\begin{equation}\label{eq:Hspace}
				\mathcal{H} := L^{2,\rho}_{\mathbb{F}}(\mathbb{T}_{\infty};\mathbb{R}^n),
			\end{equation}
			equipped with the norm
			\begin{equation}
				\|x(\cdot)\|_{\rho}^2 := \sum_{k = 0}^{\infty} e^{-\rho k} \mathbb{E}\bigl[|x_{k}|^2\bigr].
			\end{equation}
			Here, $(\mathcal{H}, \|\cdot\|_{\rho})$ is a Hilbert space. Define the operator $T:\mathcal{H}\to\mathcal{H}$ as:
			\[
			(Tx)_{0} = \eta, \quad (Tx)_{k+1} = b(k,x_{k}) + \sigma(k,x_{k})w_{k}, \quad k\geq 0.
			\]
			A fixed point of $T$ corresponds to a solution of \eqref{eq:2.1}.
			
			\noindent\textbf{Step 1. $T$ maps $\mathcal{H}$ into itself.}
			First, consider the initial term $(Tx)_0 = \eta$. By assumption on the initial data, we have the finite expectation:
			\[
			\mathbb{E}[|(Tx)_0|^2] = \mathbb{E}[|\eta|^2] < \infty.
			\]
			Next, for the recursive terms with $k \geq 0$, we expand the operator action:
			\[
			(Tx)_{k+1} = b(k,x_k) + \sigma(k,x_k)w_k.
			\]
			Taking the square expectation yields:
			\begin{align*}
				\mathbb{E}[|(Tx)_{k+1}|^2]
				&= \mathbb{E}[|b(k,x_k)|^2] + \mathbb{E}[|\sigma(k,x_k)|^2].
			\end{align*}
			Applying the linear growth conditions $|b(k,x)| \leq |b(k,0)| + L_1|x|$ and $|\sigma(k,x)| \leq |\sigma(k,0)| + L_1|x|$, we bound each term:
			\begin{align*}
				\mathbb{E}[|b(k,x_k)|^2] &\leq 2\mathbb{E}[|b(k,0)|^2] + 2L_1^2\mathbb{E}[|x_k|^2], \\
				\mathbb{E}[|\sigma(k,x_k)|^2] &\leq 2\mathbb{E}[|\sigma(k,0)|^2] + 2L_1^2\mathbb{E}[|x_k|^2].
			\end{align*}
			Summing over $k \geq 0$ with weights $e^{-\rho(k+1)}$, we obtain:
			\begin{align*}
				\sum_{k=0}^\infty e^{-\rho(k+1)}\mathbb{E}[|(Tx)_{k+1}|^2]
				\leq e^{-\rho}\bigg\{2\sum_{k=0}^\infty e^{-\rho k}\mathbb{E}\left[|b(k,0)|^2 + |\sigma(k,0)|^2\right]+ 4L_1^2 \sum_{k=0}^\infty e^{-\rho k}\mathbb{E}[|x_k|^2]\bigg\} .
			\end{align*}
			The first summation on the right-hand side of the inequality converges because $b(\cdot,0), \sigma(\cdot,0) \in \mathcal{H}$, while the second term is finite due to $x(\cdot) \in \mathcal{H}$.
			Then we conclude that $Tx(\cdot) \in \mathcal{H}$.

			\medskip
			
			\noindent\textbf{Step 2. $T$ is a strict contraction.}
			
			Fix $x(\cdot),\bar x(\cdot)\in\mathcal{H}$ and set $g(\cdot) = Tx(\cdot)$, $\bar g(\cdot) = T\bar x(\cdot)$. For each $k\ge0$,
			\[
			g_{k+1}-\bar g_{k+1}
			=
			\bigl[b(k,x_{k})-b(k,\bar x_{k})\bigr]
			+
			\bigl[\sigma(k,x_{k})-\sigma(k,\bar x_{k})\bigr]\,w_{k}.
			\]
			Taking the conditional expectation given $\mathcal{F}_{k-1}$ and then the total expectation, we obtain
			\[
			\mathbb{E}\bigl[\,|g_{k+1}-\bar g_{k+1}|^{2}\bigr]
			=
			\mathbb{E}\Bigl[\,|b(k,x_{k})-b(k,\bar x_{k})|^{2}
			+|\sigma(k,x_{k})-\sigma(k,\bar x_{k})|^{2}\Bigr]
			\le 2L_{1}^{2}\,\mathbb{E}\bigl[|x_{k}-\bar x_{k}|^{2}\bigr].
			\]
			We now sum with the weights $e^{-\rho m}$. By definition,
			\[
			\|g(\cdot)-\bar g(\cdot)\|_{\rho}^{2}
			=
			\sum_{m=0}^{\infty} e^{-\rho m}
			\,\mathbb{E}\bigl[\,|g_{m}-\bar g_{m}|^{2}\bigr]
			=
			\sum_{k=0}^{\infty} e^{-\rho(k+1)}
			\,\mathbb{E}\bigl[\,|g_{k+1}-\bar g_{k+1}|^{2}\bigr].
			\]
			Substituting the one - step bound, we get
			\begin{align*}
				\|g(\cdot)-\bar g(\cdot)\|_{\rho}^{2}
				&\le
				\sum_{k=0}^{\infty} e^{-\rho(k+1)}
				\bigl[\,2L_{1}^{2}\,\mathbb{E}|x_{k}-\bar x_{k}|^{2}\bigr]\\
				&=
				2L_{1}^{2}\,e^{-\rho}
				\sum_{k=0}^{\infty} e^{-\rho k}
				\,\mathbb{E}\bigl[|x_{k}-\bar x_{k}|^{2}\bigr]\\
				&=
				2L_{1}^{2}\,e^{-\rho}\,\|x(\cdot)-\bar x(\cdot)\|_{\rho}^{2}.
			\end{align*}
			Then
			\[
			\|T x(\cdot) - T\bar x(\cdot)\|_{\rho}
			\le  \bigl(\sqrt{2}\,L_{1}\,e^{-\rho/2}\bigr)\,\|x(\cdot)-\bar x(\cdot)\|_{\rho}.
			\]
			Since $\rho>\ln(4L_{1}^{2})$, we have $\sqrt{2}\,L_{1}\,e^{-\rho/2}<1$. So, the operator $T$ is a strict contraction.
			
			\medskip
			
			By the Banach fixed-point theorem, $T$ has a unique fixed point $x^{*}(\cdot)\in\mathcal{H}$, which is the solution of S$\triangle$E $(b,\sigma,\eta)$. For uniqueness, suppose \(x(\cdot)\) and \(\bar{x}(\cdot)\) are two solutions. Applying \eqref{eq:2.4} with \((\bar{b}, \bar{\sigma}, \bar{\eta}) = (b, \sigma, \eta)\), the right-hand side vanishes, implying \(x(\cdot) = \bar{x}(\cdot)\).
	\end{proof}}

	\subsection{Infinite horizon  BS$\bigtriangleup $E}		
	We consider the infinite horizon BS$\bigtriangleup $E as follows:
	\begin{equation}\label{eq:3.1}
		\begin{aligned}
			&y_k =f\left(k+1,y'_{k+1},z'_{k+1}\right),
		\end{aligned}
	\end{equation}
	where  $z_{k+1}=y_{k+1}w_k,\ y'_{k+1}=\mathbb E[y_{k+1}|\mathcal{F}_{k-1}],\ z'_{k+1}=\mathbb E[z_{k+1}|\mathcal{F}_{k-1}] $. For convenience, we define BS$\triangle$E \eqref{eq:3.1} with coefficient $f$ as BS$\triangle$E $(f)$.
	The definition of the solution to infinite horizon  BS$\bigtriangleup $E $(f)$ is introduced as follows:
	\begin{defn}
		For some $ \rho\in \mathbb R,$ a $\mathbb{R}^n$-valued stochastic process $y(\cdot)\in L^{2,\rho}_{\mathbb{F} }(\mathbb{T}_{\infty};\mathbb{R}^n )$ is called a  solution of   BS$\bigtriangleup $E $(f)$ if
			(i) $f(\cdot,y'(\cdot),z'(\cdot))\in L_{\mathbb{F}}^{2,\rho}(\mathbb{T}_{\infty};\mathbb{R}^n)$;
			(ii) for any non-negative integer  $N$,
		\begin{equation}\label{eq:3.2}
			\left\{\begin{aligned}
				&y_k =f\left(k+1,y'_{k+1},z'_{k+1}\right), \\
				\\&y_N  =y_N,\quad k \in \mathbb{T}_{N}.
			\end{aligned}\right.
		\end{equation}
		
	\end{defn}
	The coefficient $f$ is assumed to satisfy the following assumptions:
	\begin{ass}\label{ass:3.1}
		$f$ is a given  mapping:	$ f: \Omega \times \mathbb{T}_{\infty}  \times \mathbb{R}^n \times \mathbb{R}^{n}\to \mathbb{R}^n$
		satisfying: \\
		(i) For any $y',z' \in \mathbb{R}^n$, $f(k+1,y',z') $ is $\mathcal{F}_{k-1}$-measurable, $k \in \mathbb{T}_{\infty}$;
		\\(ii)  $f(\cdot , 0, 0)\in  L^{2,\rho}_{\mathbb{F} }(\mathbb{T}_{\infty};\mathbb{R}^n )$;
		\\(iii) the mapping $f$ is uniformly Lipschitz continuous with respect to ($y',z'$), i.e.,  there exists a constant $L_{2}>0$ such that the inequality
		\begin{eqnarray}
			\begin{aligned}
				|f&(k+1 , y', z')-f(k+1 ,\bar{y}' , \bar{z}')|\le L_2\big(|y'-\bar{y}' |+|z'-\bar{z}' |\big), \quad k \in \mathbb{T}_{\infty}.\nonumber
			\end{aligned}
		\end{eqnarray}
		holds for any $y',\bar{y}',z',\bar{z}'\in \mathbb{R}^{n}$.

	\end{ass}

	\begin{lem}\label{lem:2.2}
		Let Assumption \ref{ass:3.1} hold, and we continue to assume that  $
		\rho < -\ln(6L^2_2)$. Suppose $y(\cdot)\in L^{2,\rho}_{\mathbb{F} }(\mathbb{T}_{\infty};\mathbb{R}^n )$ is a solution to BS$\bigtriangleup $E $(f)$. Then we have the following estimate:
		\begin{eqnarray}\label{eq:3.3}
			\sum_{k=0}^{\infty} e^{-\rho k} \mathbb{E}  \left[| y_k|^2 \right]
			\le \mathcal{C}\displaystyle\sum_{k=0}^{\infty}e^{-\rho (k+1)}\mathbb E\left[|f(k+1,0,0)|^2
			\right],
		\end{eqnarray}
		where $\mathcal{C}$ is a positive constant depending only on the constants $L_{2}$ and $\rho$. Furthermore, let $\bar{f}$ be another coefficient satisfying Assumption \ref{ass:3.1} and suppose that $\bar{y}(\cdot)\in L^{2,\rho}_{\mathbb{F} }(\mathbb{T}_{\infty};\mathbb{R}^n ) $ is a solution to BS$\bigtriangleup $E $(\bar{f})$. Then we have the following estimate:
		\begin{eqnarray}\label{eq:3.4}
			\begin{aligned}
				\sum_{k=0}^{\infty}e^{-\rho k}\mathbb{E}\left[|y_{k}-\bar{y}_{k}|^{2}\right]
				\le\  \mathcal{C}\displaystyle\sum_{k=0}^{\infty}e^{-\rho (k+1)}\mathbb E\left[
				|f(k+1,\bar{y}'_{k+1},\bar{z}'_{k+1})-\bar f(k+1,\bar{y}'_{k+1},\bar{z}'_{k+1})|^2\right],
			\end{aligned}
		\end{eqnarray}
		where  $\bar{z}_{k+1}=\bar{y}_{k+1}w_k,\ \bar{y}'_{k+1}=\mathbb E[\bar{y}_{k+1}|\mathcal{F}_{k-1}],\ \bar{z}'_{k+1}=\mathbb E[\bar{z}_{k+1}|\mathcal{F}_{k-1}] $ and $\mathcal{C}$ is a positive constant depending only on the constants $L_{2}$ and $\rho$.
	\end{lem}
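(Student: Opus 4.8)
The plan is to read the defining relation $y_k = f(k+1,y'_{k+1},z'_{k+1})$ as a one-step backward recursion and to convert it, via the Lipschitz hypothesis, into a scalar recursive inequality for $\mathbb{E}[|y_k|^2]$ that can be summed against the weights $e^{-\rho k}$. First I would invoke Assumption~\ref{ass:3.1}(iii) to write
\[
|y_k| = |f(k+1,y'_{k+1},z'_{k+1})| \le |f(k+1,0,0)| + L_2\big(|y'_{k+1}| + |z'_{k+1}|\big),
\]
then square and take expectations, using the elementary inequality $(a+b+c)^2 \le 3(a^2+b^2+c^2)$, to obtain
\[
\mathbb{E}[|y_k|^2] \le 3\,\mathbb{E}[|f(k+1,0,0)|^2] + 3L_2^2\big(\mathbb{E}[|y'_{k+1}|^2] + \mathbb{E}[|z'_{k+1}|^2]\big).
\]

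The decisive step, and the one I expect to be the main obstacle, is to control $\mathbb{E}[|y'_{k+1}|^2] + \mathbb{E}[|z'_{k+1}|^2]$ by $\mathbb{E}[|y_{k+1}|^2]$, thereby folding the two conditional-expectation variables back into the process itself so that the recursion closes. Here $y_{k+1}$ is $\mathcal{F}_k$-measurable while $y'_{k+1} = \mathbb{E}[y_{k+1}\mid\mathcal{F}_{k-1}]$ and $z'_{k+1} = \mathbb{E}[y_{k+1}w_k\mid\mathcal{F}_{k-1}]$, so the bound cannot follow from conditional Jensen alone for the $z'$ term. The approach I would take is an orthogonal decomposition relative to the conditionally orthonormal pair $\{1,w_k\}$: set $r := y_{k+1} - y'_{k+1} - z'_{k+1}w_k$ and check, using $\mathbb{E}[w_k\mid\mathcal{F}_{k-1}] = 0$ and $\mathbb{E}[w_k^2\mid\mathcal{F}_{k-1}] = 1$, that $\mathbb{E}[r\mid\mathcal{F}_{k-1}] = 0$ and $\mathbb{E}[rw_k\mid\mathcal{F}_{k-1}] = 0$. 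The conditional Pythagorean identity then reads
\[
\mathbb{E}[|y_{k+1}|^2\mid\mathcal{F}_{k-1}] = |y'_{k+1}|^2 + |z'_{k+1}|^2 + \mathbb{E}[|r|^2\mid\mathcal{F}_{k-1}],
\]
from which, after taking total expectation and discarding the nonnegative residual, both $\mathbb{E}[|y'_{k+1}|^2] \le \mathbb{E}[|y_{k+1}|^2]$ and $\mathbb{E}[|z'_{k+1}|^2] \le \mathbb{E}[|y_{k+1}|^2]$ follow. This identity is the discrete-time counterpart of the isometry exploited for continuous-time BSDEs, and it is the genuinely structural ingredient of the argument.

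Combining the two steps (and bounding the sum crudely by $2\,\mathbb{E}[|y_{k+1}|^2]$) gives the recursion $\mathbb{E}[|y_k|^2] \le 3\,\mathbb{E}[|f(k+1,0,0)|^2] + 6L_2^2\,\mathbb{E}[|y_{k+1}|^2]$. I would then multiply by $e^{-\rho k}$, sum over $k \ge 0$, and reindex, using the finiteness of $S := \sum_{k=0}^\infty e^{-\rho k}\mathbb{E}[|y_k|^2]$ (guaranteed by $y(\cdot)\in L^{2,\rho}_{\mathbb{F}}(\mathbb{T}_\infty;\mathbb{R}^n)$) together with the identity
\[
\sum_{k=0}^\infty e^{-\rho k}\mathbb{E}[|y_{k+1}|^2] = e^{\rho}\big(S - \mathbb{E}[|y_0|^2]\big) \le e^{\rho}S,
\]
to arrive at $(1 - 6L_2^2 e^{\rho})\,S \le 3e^{\rho}\sum_{k=0}^\infty e^{-\rho(k+1)}\mathbb{E}[|f(k+1,0,0)|^2]$. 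The hypothesis $\rho < -\ln(6L_2^2)$ is precisely what yields $6L_2^2 e^{\rho} < 1$, so dividing through gives \eqref{eq:3.3} with $\mathcal{C} = 3e^{\rho}/(1 - 6L_2^2 e^{\rho})$, a constant depending only on $L_2$ and $\rho$; it is this reindexing that produces the weight $e^{-\rho(k+1)}$ on the right-hand side.

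For the stability estimate \eqref{eq:3.4}, I would put $\widehat y_k := y_k - \bar y_k$ and subtract the two equations, inserting $\pm f(k+1,\bar y'_{k+1},\bar z'_{k+1})$ so that
\[
\widehat y_k = \big[f(k+1,y'_{k+1},z'_{k+1}) - f(k+1,\bar y'_{k+1},\bar z'_{k+1})\big] + \big[f(k+1,\bar y'_{k+1},\bar z'_{k+1}) - \bar f(k+1,\bar y'_{k+1},\bar z'_{k+1})\big].
\]
The first bracket is dominated by $L_2(|\widehat y'_{k+1}| + |\widehat z'_{k+1}|)$ through Lipschitz continuity, while the second bracket takes over the role played by $f(k+1,0,0)$ in the first part. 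Since $\widehat y'_{k+1} = \mathbb{E}[\widehat y_{k+1}\mid\mathcal{F}_{k-1}]$ and $\widehat z'_{k+1} = \mathbb{E}[\widehat y_{k+1}w_k\mid\mathcal{F}_{k-1}]$, the same orthogonal-decomposition bound applies verbatim to $\widehat y$, and repeating the weighted summation yields \eqref{eq:3.4} with the identical constant $\mathcal{C}$. No new ideas are required beyond the linearity of this splitting.
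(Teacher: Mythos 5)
Your proposal is correct and follows essentially the same route as the paper: Lipschitz bound plus the elementary inequality, control of the conditional-expectation terms by $\mathbb{E}[|y_{k+1}|^2]$, weighted summation with the reindexing identity, and the condition $6L_2^2e^{\rho}<1$ to close the recursion, arriving at the identical constant $\mathcal{C}=3e^{\rho}/(1-6L_2^2e^{\rho})$. The only deviations are cosmetic: the paper bounds $\mathbb{E}[|y'_{k+1}|^2]$ and $\mathbb{E}[|z'_{k+1}|^2]$ separately by $\mathbb{E}[|y_{k+1}|^2]$ via conditional Jensen and conditional Cauchy--Schwarz, whereas you derive both at once from the conditional orthogonal decomposition against $\{1,w_k\}$ (a slightly sharper identity that you then relax back to the same termwise bounds), and the paper proves \eqref{eq:3.4} first and obtains \eqref{eq:3.3} by setting $\bar f=0$ while you reverse the order.
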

	\begin{proof}
		
		The application of Jensen's inequality and the H$\ddot{o}$lder's inequality allows us to derive that
		
		\begin{equation}\label{eq:3.5}
			\begin{aligned}
				{\mathbb{E}}\left[\left|y_k^{\prime}\right|^2\right]  ={\mathbb{E}}\bigg[ |{\mathbb{E}}\left[y_k \mid\mathcal{F}_{k-2}\right]|^2 \bigg]
				\leq {\mathbb{E}}\left[{\mathbb{E}}\left[|y_k|^2\mid \mathcal{F}_{k-2}\right]\right]
				={\mathbb{E}}\left[|y_k|^2\right]
			\end{aligned}
		\end{equation}
		and
		\begin{equation}\label{eq:3.6}
			\begin{aligned}
				{\mathbb{E}}\left[\left|z_k^{\prime}\right|^2\right] & ={\mathbb{E}}\bigg[|{\mathbb{E}}\left[y_k w_{k-1} \mid \mathcal{F}_{k-2}\right]|^2 \bigg]\\
				& \left.\leq {\mathbb{E}}\bigg[{\mathbb{E}}\left[|y_k|^2\mid \mathcal{F}_{k-2}\right] \cdot {\mathbb{E}}\left[w_{k-1}^2 \mid \mathcal{F}_{k-2}\right]\right] \\
				& ={\mathbb{E}}\left[{\mathbb{E}}\left[|y_k|^2 \mid \mathcal{F}_{k-2}\right]\right] \\
				& ={\mathbb{E}}\left[|y_k|^2\right].
			\end{aligned}
		\end{equation}
		In a similar manner, estimates for for $y_k'-\bar{y}'_k $ and $z_k'-\bar{z}'_k$ can be obtained as follows:
		\begin{equation}\label{eq:3.7}
			\begin{aligned}
				{\mathbb{E}}\left[\left|y_k^{\prime}-\bar{y}'_{k}\right|^2\right]  ={\mathbb{E}}\bigg[ |{\mathbb{E}}\left[y_k-\bar{y}_{k} \mid\mathcal{F}_{k-2}\right]|^2 \bigg]
				\leq {\mathbb{E}}\left[{\mathbb{E}}\left[|y_k-\bar{y}_{k}|^2\mid \mathcal{F}_{k-2}\right]\right]
				={\mathbb{E}}\left[|y_k-\bar{y}_{k}|^2\right],
			\end{aligned}
		\end{equation}
		\begin{equation}  \label{eq:3.8}
			\begin{aligned}
				\mathbb{E}\left[\left|z_k^{\prime}-\bar{z}'_k\right|^2\right] & =\mathbb{E}\left[|\mathbb{E}\left[\left.\left(y_k-\bar{y}_k\right) w_{k-1}\right| \mathcal{F}_{k-2} \right]|^2\right] \\
				& \leq \mathbb{E}\left[\mathbb{E}\left[|y_k-\bar{y}_k|^2 \mid \mathcal{F}_{k-2}\right] \cdot \mathbb{E}\left[w_{k-1}^2|\mathcal{F}_{k-2}\right]\right] \\
				& =\mathbb{E}\left[\left|y_k-\bar{y}_k\right|^2\right].
			\end{aligned}
		\end{equation}
		Therefore, by using the elementary inequality $|a+b+c|^2\le  3( |a|^2+|b|^2+|c|^2)$ and  the Lipschitz condition of $f$, we have
		\begin{equation} \label{eq:3.9}
			\begin{aligned}
				&{\mathbb{E}}\left[\left|y_{k}-\bar{y}_{k}\right|^2\right]\\  =&{\mathbb{E}}\left[\left|f\left(k+1, y'_{k+1}, z'_{k+1}\right)-\bar{f}\left(k+1, \bar{y}'_{k+1}, \bar{z}'_{k+1}\right)\right|^2\right] \\
				=&{\mathbb{E}}\left[\left|f\left(k+1, y'_{k+1}, z'_{k+1}\right)-f\left(k+1, \bar{y}'_{k+1}, \bar{z}'_{k+1}\right)+f\left(k+1, \bar{y}'_{k+1}, \bar{z}'_{k+1}\right)-\bar{f}\left(k+1, \bar{y}'_{k+1}, \bar{z}'_{k+1}\right)\right|^2\right] \\
				\leq& 3  {\mathbb{E}} \left\{ L^2_2\left|y'_{k+1}-\bar{y}'_{k+1}\right|^2
				+L^2_2\left|z'_{k+1}-\bar{z}'_{k+1}\right|^2+\left|f\left(k+1, \bar{y}'_{k+1}, \bar{z}'_{k+1}\right)-\bar{f}\left(k+1, \bar{y}'_{k+1}, \bar{z}'_{k+1}\right)\right|^2\right\} \\
				\leq&  6L^2_2  {\mathbb{E}} \left [\left|y_{k+1}-\bar{y}_{k+1}\right|^2\right]
				+ 3 {\mathbb{E}} \left [\left|f\left(k+1, \bar{y}'_{k+1}, \bar{z}'_{k+1}\right)-\bar{f}\left(k+1, \bar{y}'_{k+1}, \bar{z}'_{k+1}\right)\right|^2\right],
			\end{aligned}
		\end{equation}
		where \eqref{eq:3.7} and \eqref{eq:3.8} have been used in the last inequality.
		So  we have
		\begin{equation} \label{eq:3.10}
			\begin{aligned}
				&\sum_{k=0}^{\infty} e^{-\rho k}\mathbb{E} \left[\left|y_{k}-\bar{y}_{k}\right|^2\right]\\
				\leq&  6L^2_2 \sum_{k=0}^{\infty} e^{-\rho k}\ {\mathbb{E}} \left [\left|y_{k+1}-\bar{y}_{k+1}\right|^2\right]
				+  3\sum_{k=0}^{\infty}e^{-\rho k} {\mathbb{E}} \left [\left|f\left(k+1, \bar{y}'_{k+1}, \bar{z}'_{k+1}\right)-\bar{f}\left(k+1, \bar{y}'_{k+1}, \bar{z}'_{k+1}\right)\right|^2\right].
			\end{aligned}
		\end{equation}
		Let \(A_2 = \sum_{k=0}^{\infty} e^{-\rho k}\mathbb{E} \left[\left|y_{k}-\bar{y}_{k}\right|^2\right]\). Then  we can get that
		\[
		A_2 \le  6L^2_2 e^{\rho} (A_2 - e^{-\rho} \mathbb{E} [ | y_0-\bar{y}_{0} |^2 ])+ 3 e^{\rho}\sum_{k=0}^{\infty}e^{-\rho (k+1)} {\mathbb{E}} \left [\left|f\left(k+1, \bar{y}'_{k+1}, \bar{z}'_{k+1}\right)-\bar{f}\left(k+1, \bar{y}'_{k+1}, \bar{z}'_{k+1}\right)\right|^2\right].
		\]
		It is easy to see that
		\[
		A_2 (1 - 6L^2_2 e^{\rho}) \le   3 e^{\rho} \sum_{k=0}^{\infty}e^{-\rho (k+1)} {\mathbb{E}} \left [\left|f\left(k+1, \bar{y}'_{k+1}, \bar{z}'_{k+1}\right)-\bar{f}\left(k+1, \bar{y}'_{k+1}, \bar{z}'_{k+1}\right)\right|^2\right].
		\]
		Since \(\rho < -\ln(6L_{2}^2)\), \(1 - 6L_{2}^2 e^{\rho} > 0\). Therefore,
		\[
		A_2 \le \frac{3 e^{\rho}}{1 - 6L^2 _2e^{\rho}} \sum_{k=0}^{\infty}e^{-\rho (k+1)} {\mathbb{E}} \left [\left|f\left(k+1, \bar{y}'_{k+1}, \bar{z}'_{k+1}\right)-\bar{f}\left(k+1, \bar{y}'_{k+1}, \bar{z}'_{k+1}\right)\right|^2\right].
		\]
		Letting \(\mathcal{C} = \frac{3 e^{\rho}}{1 - 6L^2_2 e^{\rho}}\) (which depends only on \(L_2\) and $\rho$), we obtain \eqref{eq:3.4}. In particular, if we select $\bar{f}=0$, from  \eqref{eq:3.4},  we can obtain  \eqref{eq:3.3}. This proof is complete.
	\end{proof}

	{\color{blue}
		\begin{thm}\label{thm:2.2}
			Under Assumption \ref{ass:3.1}, if the constant \(\rho\) satisfies \(\rho < -\ln(6L_2^2)\), then the BS$\triangle$E $(f)$ admits a unique solution \(y(\cdot) \in L_{\mathbb{F}}^{2,\rho}(\mathbb{T}_{\infty};\mathbb{R}^n)\).
		\end{thm}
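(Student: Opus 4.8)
The plan is to prove existence by the Banach fixed-point theorem on the weighted Hilbert space $\mathcal{H} := L^{2,\rho}_{\mathbb{F}}(\mathbb{T}_{\infty};\mathbb{R}^n)$ equipped with $\|y(\cdot)\|_{\rho}^2 = \sum_{k=0}^{\infty} e^{-\rho k}\mathbb{E}[|y_k|^2]$, mirroring the argument used for Theorem \ref{thm:2.1}, and to read off uniqueness from the a priori estimate \eqref{eq:3.4} of Lemma \ref{lem:2.2}. Define the map $T:\mathcal{H}\to\mathcal{H}$ by $(Ty)_k := f(k+1,y'_{k+1},z'_{k+1})$, where $z_{k+1}=y_{k+1}w_k$, $y'_{k+1}=\mathbb{E}[y_{k+1}\mid\mathcal{F}_{k-1}]$ and $z'_{k+1}=\mathbb{E}[z_{k+1}\mid\mathcal{F}_{k-1}]$. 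Since $f(k+1,\cdot,\cdot)$ is $\mathcal{F}_{k-1}$-measurable by Assumption \ref{ass:3.1}(i) and the projections $y'_{k+1},z'_{k+1}$ are $\mathcal{F}_{k-1}$-measurable, each $(Ty)_k$ lies in the correct measurability class, so a fixed point of $T$ is precisely a solution of BS$\triangle$E $(f)$.

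\textbf{Step 1 (stability).} To check that $T$ maps $\mathcal{H}$ into itself, I would apply $|f(k+1,y',z')|^2 \le 3|f(k+1,0,0)|^2 + 3L_2^2|y'|^2 + 3L_2^2|z'|^2$ together with the projection bounds \eqref{eq:3.5}--\eqref{eq:3.6}, which give $\mathbb{E}[|y'_{k+1}|^2]\le\mathbb{E}[|y_{k+1}|^2]$ and $\mathbb{E}[|z'_{k+1}|^2]\le\mathbb{E}[|y_{k+1}|^2]$. This yields $\mathbb{E}[|(Ty)_k|^2]\le 3\mathbb{E}[|f(k+1,0,0)|^2] + 6L_2^2\,\mathbb{E}[|y_{k+1}|^2]$; summing against the weights $e^{-\rho k}$ and re-indexing $k+1\mapsto j$ produces an overall factor $e^{\rho}$, with the first sum finite by Assumption \ref{ass:3.1}(ii) and the second finite because $y(\cdot)\in\mathcal{H}$. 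Hence $Ty(\cdot)\in\mathcal{H}$.

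\textbf{Step 2 (contraction).} For $y(\cdot),\bar y(\cdot)\in\mathcal{H}$ put $g(\cdot)=Ty(\cdot)$ and $\bar g(\cdot)=T\bar y(\cdot)$. Subtracting and invoking the Lipschitz property of $f$ together with \eqref{eq:3.7}--\eqref{eq:3.8} gives, exactly as in the display \eqref{eq:3.9} of Lemma \ref{lem:2.2} with $\bar f=f$ (so the generator-difference term drops), the one-step bound $\mathbb{E}[|g_k-\bar g_k|^2]\le 6L_2^2\,\mathbb{E}[|y_{k+1}-\bar y_{k+1}|^2]$. Weighting by $e^{-\rho k}$ and re-indexing then yields $\|g(\cdot)-\bar g(\cdot)\|_{\rho}^2 \le 6L_2^2 e^{\rho}\,\|y(\cdot)-\bar y(\cdot)\|_{\rho}^2$, and the hypothesis $\rho<-\ln(6L_2^2)$ forces $6L_2^2 e^{\rho}<1$, so $T$ is a strict contraction. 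The Banach fixed-point theorem then supplies a unique fixed point $y^*(\cdot)\in\mathcal{H}$, i.e. a solution; uniqueness among all $\mathcal{H}$-solutions follows independently by applying \eqref{eq:3.4} with $\bar f=f$, whose right-hand side vanishes.

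The point to get right — more a structural feature than a genuine obstacle — is the role of the backward index shift: because $(Ty)_k$ depends on the future value $y_{k+1}$, re-indexing the weighted sums rewrites $e^{-\rho k}$ as $e^{\rho}\cdot e^{-\rho(k+1)}$, so the contraction constant carries a factor $e^{\rho}$ rather than the $e^{-\rho}$ appearing in the forward case. This is exactly why the admissible range here is an \emph{upper} bound $\rho<-\ln(6L_2^2)$, as opposed to the \emph{lower} bound $\rho>\ln(4L_1^2)$ in Theorem \ref{thm:2.1}. The only technical care required is the control of the conditional-expectation projections $y'_{k+1},z'_{k+1}$, and this is already furnished by the Jensen/Hölder estimates \eqref{eq:3.5}--\eqref{eq:3.8} established in Lemma \ref{lem:2.2}.
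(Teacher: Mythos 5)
Your argument is correct, but it takes a genuinely different route from the paper. You prove existence by running the Banach fixed-point argument of Theorem \ref{thm:2.1} in reverse: the map $(Ty)_k = f(k+1,y'_{k+1},z'_{k+1})$ is well defined on $L^{2,\rho}_{\mathbb{F}}(\mathbb{T}_{\infty};\mathbb{R}^n)$ (the measurability bookkeeping you note is the right check, since $y'_{k+1},z'_{k+1}$ and $f(k+1,\cdot,\cdot)$ are all $\mathcal{F}_{k-1}$-measurable), and the backward index shift turns the weight into the factor $e^{\rho}$, giving the contraction constant $6L_2^2e^{\rho}<1$ exactly under the stated hypothesis; your diagnosis of why the admissible range is an upper bound on $\rho$ is on point. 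The paper instead proves existence by finite-horizon approximation: it splits off the free term $\varphi_{k+1}=f(k+1,0,0)$, truncates it, solves the resulting finite-horizon BS$\bigtriangleup$Es by citing the known finite-horizon theory of \cite{27}, extends by zero, and uses the stability estimate \eqref{eq:3.4} of Lemma \ref{lem:2.2} to show the approximating sequence is Cauchy before passing to the limit; uniqueness is obtained from \eqref{eq:3.4} in both treatments. Your route is more self-contained and parallels the forward case exactly, avoiding any appeal to external finite-horizon solvability results; the paper's route reuses the a priori estimate already proved and makes the connection to finite-horizon truncations explicit, which can be convenient for approximation or numerical purposes. A cosmetic remark: with $\bar f=f$ the generator-difference term in \eqref{eq:3.9} vanishes, so the two remaining terms give the sharper one-step constant $4L_2^2$ rather than $6L_2^2$; your weaker constant is of course still sufficient, since $6L_2^2e^{\rho}<1$ is exactly what the hypothesis provides.
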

		
		\begin{proof}
			We first prove uniqueness. Suppose \(y(\cdot), \bar{y}(\cdot) \in L_{\mathbb{F}}^{2,\rho}(\mathbb{T}_{\infty};\mathbb{R}^n)\) are two solutions to BS$\triangle$E $(f)$. By Lemma \ref{lem:2.2} and estimate \eqref{eq:3.4}, setting \(\bar{f} = f\), we obtain
			\[
			\sum_{k=0}^\infty e^{-\rho k} \mathbb{E}\big[ |y_k - \bar{y}_k|^2 \big] \leq 0,
			\]
			which implies that for each $k$, \(y_k \equiv \bar{y}_k\) holds almost surely. Hence, the solution is unique.
			
			To prove existence, we use a finite-horizon approximation argument. First, we rewrite BS$\triangle$E $(f)$ as
			\[
			y_k = F(k+1, y'_{k+1}, z'_{k+1}) + \varphi_{k+1},
			\]
			where \(F(k, y, z) = f(k, y, z) - f(k, 0, 0)\), and \(\varphi_k = f(k, 0, 0)\) satisfies \(\sum_{k=0}^\infty e^{-\rho k} \mathbb{E}[|\varphi_k|^2] < \infty\) by assumption.
			
			For each integer \(n \geq 1\), define a truncated version of \(\varphi\) by
			\[
			\varphi^{(n)}_{k+1} = \varphi_{k+1} \cdot \mathbbm{1}_{\{k+1 \leq n\}} \quad k \in \mathbb{T}_{\infty}.
			\]
			Since \(\varphi(\cdot) \in L_{\mathbb{F}}^{2,\rho}(\mathbb{T}_{\infty};\mathbb{R}^n)\), it follows that
			\[
			\lim_{n \to \infty} \|\varphi^{(n)} - \varphi\|_\rho^2 =\lim_{n \to \infty} \sum_{k=n}^\infty e^{-\rho k} \mathbb{E}[|\varphi_{k+1}|^2] = 0.
			\]
			Next, consider the following BS$\triangle$E on the finite time horizon $\mathbb{T}_{N}:$
			\[
			\begin{cases}
				\bar{y}_k^{(n)} = F(k+1, \bar{y}_{k+1}^{(n)'}, \bar{z}_{k+1}^{(n)'}) + \varphi^{(n)}_{k+1}, & k\in\mathbb{T}_{n-1}, \\
				\bar{y}_n^{(n)} = 0,
			\end{cases}
			\]
			where
			\begin{align*}
				\bar{y}_{k + 1}^{(n)'} &= \mathbb{E}\left[\bar{y}_{k + 1}^{(n)} \mid \mathcal{F}_{k - 1}\right], \\
				\bar{z}_{k + 1}^{(n)'} &= \mathbb{E}\left[\bar{y}_{k + 1}^{(n)} \omega_k \mid \mathcal{F}_{k - 1}\right].
			\end{align*}
			According to the result in \cite{27}, this finite time horizon equation admits a unique adapted solution \(\{\bar{y}_k^{(n)}\}_{k=0}^n\).
			
			We now extend \(\bar{y}^{(n)}(\cdot)\) to an infinite sequence \(y^{(n)}(\cdot) \in L_{\mathbb{F}}^{2,\rho}(\mathbb{T}_{\infty};\mathbb{R}^n)\) by defining
			\[
			y^{(n)}_k =
			\begin{cases}
				\bar{y}_k^{(n)}, & k\in\mathbb{T}_{n}, \\
				0, & k\in\{n+1,n+2,\cdots\}.
			\end{cases}
			\]
			Then \(y^{(n)}(\cdot)\) satisfies
			\[
			y^{(n)}_k = F(k+1, y^{(n)'}_{k+1}, z^{(n)'}_{k+1}) + \varphi^{(n)}_{k+1}, \quad \forall k\in \mathbb{T}_{\infty}.
			\]
			Let \(m > n\), and define \(\delta y^{(m,n)}(\cdot) := y^{(m)}(\cdot) - y^{(n)}(\cdot)\). By Lemma \ref{lem:2.2}, we have
			\[
			\| \delta y^{(m,n)}(\cdot) \|_\rho^2 \leq C \| \varphi^{(m)}(\cdot) - \varphi^{(n)}(\cdot) \|_\rho^2.
			\]
			Combined with the convergence \(\| \varphi^{(m)}(\cdot) - \varphi^{(n)}(\cdot) \|_\rho \to 0\), we conclude that \(\{y^{(n)}(\cdot)\}\) is a Cauchy sequence in the Hilbert space \(L_{\mathbb{F}}^{2,\rho}(\mathbb{T}_{\infty};\mathbb{R}^n)\). Therefore, there exists a unique limit \(y(\cdot) \in L_{\mathbb{F}}^{2,\rho}(\mathbb{T}_{\infty};\mathbb{R}^n)\) such that
			\[
			\lim_{n \to \infty} \| y^{(n)}(\cdot) - y(\cdot) \|_\rho = 0.
			\]
			It remains to verify that the limit \(y(\cdot)\) solves BS$\triangle$E $(f)$. For each fixed \(k \geq 0\), we have
			\[
			e^{-\rho k} \mathbb{E}[|y^{(n)}_k - y_k|^2] \leq \| y^{(n)}(\cdot) - y(\cdot) \|_\rho^2 \to 0,
			\]
			so \(y^{(n)}_k \to y_k\) in \(L^2(\Omega)\). It then follows from the continuity of conditional expectation that
			\[
			\mathbb{E}[|y^{(n)'}_{k+1} - y'_{k+1}|^2] \to 0, \quad
			\mathbb{E}[|z^{(n)'}_{k+1} - z'_{k+1}|^2] \to 0.
			\]
			By the  Lipschitz continuity of \(F\), we obtain
			\[
			\mathbb{E}\left[ \left| F(k+1, y^{(n)'}_{k+1}, z^{(n)'}_{k+1}) - F(k+1, y'_{k+1}, z'_{k+1}) \right|^2 \right] \to 0.
			\]
			Together with the convergence \(\varphi^{(n)}(\cdot) \to \varphi(\cdot)\) in \(L_{\mathbb{F}}^{2,\rho}(\mathbb{T}_{\infty};\mathbb{R}^n)\), we conclude that
			\[
			y_k = F(k+1, y'_{k+1}, z'_{k+1}) + \varphi_{k+1}, \quad \mathbb{P}\text{-a.s.}, \ \forall k\in \mathbb{T}_{\infty}.
			\]
			Finally, we note that for any finite integer \(N \geq 0\), \(y^{(n)}_N = \bar{y}^{(n)}_N\) is \(\mathcal{F}_{N-1}\)-measurable for \(n > N\), and the limit \(y_N = \lim_{n \to \infty} y^{(n)}_N\) remains \(\mathcal{F}_{N-1}\)-measurable. Hence, the measurability of the limit process is preserved. This completes the proof.
	\end{proof}}

	\section{FBS$\bigtriangleup $Es with Domination-Monotonicity Conditions}\label{sec:3}
	Drawing on the preliminary results from the preceding section, our focus in this section will be on the investigation of the  FBS$\bigtriangleup $E \eqref{eq:1.1}.
	For the coefficients $(\Lambda,\Gamma)$ of FBS$\bigtriangleup $E \eqref{eq:1.1}, we now make the following assumptions.
	
	\begin{ass}\label{ass:4.1}
		(i) For any $y\in \mathbb{R}^n$,  $\Lambda (y)$ is deterministic. Furthermore, for any
		$\theta=(x,y',z')\in \mathbb{R}^{3n}$, $
		\Gamma (k,\theta )$ is $\mathcal{F}_{k-1}$-measurable. Moreover, $(\Lambda(0),\Gamma (\cdot ,0,0,0) )\in \mathcal{H}^{2,\rho}(\mathbb{T}_{\infty})$ for some 
$\rho\in \mathbb R$;
		(ii) The mapping $\Gamma$ is uniformly Lipschitz continuous, i.e., for any $x,\bar{x}, y,\bar{y},y',\bar{y}',z',\bar{z}'\in \mathbb{R}^n$, there exists eight constants $L, L_{1}, L_{2},L_3,L_4,L_5,L_6,L_7>0$ such that
		\begin{equation}
			\left\{\begin{aligned}
				&|\Lambda(y)-\Lambda(\bar{y})|\le L|y-\bar{y}|,\\
				&|b(k,x,y',z' )-b(k,\bar{x},\bar{y}',\bar{z}' )|\le L_{1}|x -\bar{x}|+L_3|y' -\bar{y}'|+L_4|z' -\bar{z}'|,\\
				&|\sigma(k,x,y',z' )-\sigma(k,\bar{x},\bar{y}',\bar{z}' )|\le L_{1}|x -\bar{x}|+L_5|y' -\bar{y}'|+L_6|z' -\bar{z}'|,\\
				&|f(k+1,x,y',z')-f(k+1,\bar{x},\bar{y}',\bar{z}')|\le L_{7}|x-\bar{x}|+L_2\big(|y'-\bar{y}' |+|z'-\bar{z}' |\big).
			\end{aligned}\right.\nonumber
		\end{equation}
	\end{ass}
	Apart from the aforementioned Assumption \ref{ass:4.1}, we present the following domination-monotonicity conditions for the coefficients $(\Lambda,\Gamma)$ to make future studies more convenient.
	\begin{ass}\label{ass:4.2}
		There exist two constants $\mu\ge 0$, $\nu\ge0$,   a matrix $M \in \mathbb{R}^{{m} \times n}$, along with a series of matrix-valued processes $A(\cdot), B(\cdot),C(\cdot)\in  L_{\mathbb{F}}^\infty\left(\mathbb{T}_{\infty};\mathbb{R}^{m\times n}\right)$,
		such that the subsequent conditions are satisfied.:
		
		(i) One of the following two cases is actually true. Case $1$: $\mu>0$ and $\nu=0$. Case $2$: $\mu=0$ and $\nu>0$.
		
		(ii) (domination condition) For  all $k\in \mathbb{T}_{\infty} $, almost all $\omega \in \Omega$ and any $x,\bar{x}, y,\bar{y},y',\bar{y}',z',\bar{z}'\in \mathbb{R}^n$,
		\begin{equation}\label{eq:4.2}
			\left\{\begin{aligned}
				&|\Lambda(y)-\Lambda(\bar{y})|\le \frac{1}{\mu }|M\widehat{y}|,\\
				& |f(k+1,x,y',z')-f(k+1,\bar{x},y',z')| \le \frac{1}{\nu}\big|A_k\widehat{x}\big|,\\
				& |h(k,x,y',z')-h(k,x,\bar{y}',\bar{z}')| \le \frac{1}{\mu}\bigg|B_k\widehat{y}'+C_k\widehat{z}'\bigg|,
			\end{aligned}\right.
		\end{equation}
		where $h=b, \sigma$, and $\widehat{x}=x-\bar{x}$, $\widehat{y}=y-\bar{y}$,  $\widehat{y}'=y'-\bar{y}'$, $\widehat{z}'=z'-\bar{z}'$.
		
		It is worth noting that there is a minor issue of notation abuse in the aforementioned conditions. Specifically, when $\mu=0$ (resp. $\nu=0$), $\frac{1}{\mu} $ (resp. $\frac{1}{\nu} $) is used to signify $+\infty$. Put differently, if $\mu=0$ or $\nu=0$, the associated domination limitations will disappear.
		
		(iii) (monotonicity condition) For  all $k\in \mathbb{T}_{\infty} $, almost all $\omega \in \Omega$ and any $\theta=(x,y',z'),\bar{\theta}=(\bar{x},\bar{y}',\bar{z}') \in \mathbb{R}^{3n}$, $y,\bar{y}\in \mathbb{R}^n$,
		\begin{equation}\label{eq:4.3}
			\left\{\begin{aligned}
				&\langle\Lambda(y)-\Lambda(\bar{y}), \widehat{y}\rangle \le -\mu|M \widehat{y}|^2,\\
				& \langle\varUpsilon (k,\theta)-\varUpsilon (k,\bar{ \theta}),\widehat{\theta}\rangle
				\leq -\nu\big|A_k\widehat{x}\big|^2-\mu \bigg|B_k\widehat{y}'+C_k\widehat{z}'\bigg|^2,
			\end{aligned}\right.
		\end{equation}
		where $\varUpsilon (k,\theta) = f(k+1,\theta) +e^{-\rho} b(k,\theta)+e^{-\rho} \sigma(k,\theta)$
		and $\widehat{\theta } =\theta - \bar{\theta} $.
	\end{ass}
	\begin{rmk}\label{rmk:4.1}
		(i) In Assumption \ref{ass:4.2}-(ii), the constant $1/\mu$ and $1/\nu$ could be substituted by $\mathbb{K}/\mu$ and $\mathbb{K}/\nu  (\mathbb{K}>0)$. However, for the purpose of simplicity, we choose to leave out the constant $\mathbb{K}$;\\
		(ii) A symmetric version of Assumption \ref{ass:4.2}-(iii) exists and is presented as follows:
		
		For  all $k\in \mathbb{T}_{\infty} $, almost all $\omega \in \Omega$ and any $\theta,\bar{\theta} \in \mathbb{R}^{3n}$,
		\begin{equation}\label{eq:4.4}
			\left\{\begin{aligned}
				&\langle\Lambda(y)-\Lambda(\bar{y}), \widehat{y}\rangle \ge \mu|M \widehat{y}|^2,\\
				&\langle\varUpsilon (k,\theta)-\varUpsilon (k,\bar{ \theta}),\widehat{\theta}\rangle
				\ge \nu\big|A_k\widehat{x}\big|^2+\mu \bigg|B_k\widehat{y}'+C_k\widehat{z}'\bigg|^2.
			\end{aligned}\right.
		\end{equation}
		The symmetry  between (\ref{eq:4.3}) and (\ref{eq:4.4}) is readily apparent. Therefore we omit the detailed proofs. Similar proofs can be found by referring to Yu \cite{25}.

	\end{rmk}
	For convenience, we will use the following notations:
	\begin{equation}\label{eq:4.5}
		\left\{\begin{aligned}
			&P(k,x)=A_kx,\\
			&P(k,\widehat{x})=A_k\widehat{x},\\
			&Q(k,y',z')=B_ky'+C_kz',\\
			&Q(k,\widehat{y}',\widehat{z}')=B_k\widehat{y}'+C_k\widehat{z}' ,\quad k \in\mathbb{T}_{\infty}.
		\end{aligned}\right.
	\end{equation}
	We are now in a position to present the principal theorem of this section.
	\begin{thm}\label{thm:4.1}
Suppose the coefficients $(\Lambda ,\Gamma)$ of FBS$\bigtriangleup $E $(\Lambda ,\Gamma)$satisfy Assumptions \ref{ass:4.1}-\ref{ass:4.2}. Furthermore, assume that $ \ln(4L^2_1)< -\ln(6L^2_2)$ and
$ \ln(4L^2_1)<\rho< -\ln(6L^2_2)$.
		Then FBS$\bigtriangleup $E $(\Lambda ,\Gamma)$ admits a unique solution $(x(\cdot),y(\cdot) )\in N^{2,\rho}(\mathbb{T}_{\infty};\mathbb{R}^{2n})$. Moreover, we can get the following estimate:
		\begin{equation}\label{eq:4.6}
			\|x(\cdot)\|_{\rho}^2+\|y(\cdot)\|_{\rho}^2 \le K\mathbb{E}[\mathrm{I}],
		\end{equation}
		where
		\begin{eqnarray}\label{eq:4.7}
			\begin{aligned}
				\mathrm{I} = \sum_{k=0}^{\infty} e^{-\rho k} |b(k,0,0,0)|^2 + \sum_{k=0}^{\infty} e^{-\rho k} |\sigma (k,0,0,0)|^2 + \sum_{k=0}^{\infty} e^{-\rho (k+1)} |f(k+1,0,0,0)|^2 + |\Lambda (0)|^2,
			\end{aligned}
		\end{eqnarray}
		and \( K \) is a positive constant depending only on the Lipschitz constants, \( \mu \), \( \nu \), and the bounds of all \( A(\cdot) \), \( B(\cdot) \), \( C(\cdot) \), \(M\).
		What's more, assume that \( (\bar{\Lambda},\bar{ \Gamma}) \) is another set of coefficients which satisfies Assumptions \ref{ass:4.1}-\ref{ass:4.2}. Also, suppose that \( (\bar{x}(\cdot),\bar{y}(\cdot)) \in N^{2,\rho}(\mathbb{T}_{\infty};\mathbb{R}^{2n}) \) is a solution to FBS$\bigtriangleup $E \( (\bar{\Lambda},\bar{ \Gamma}) \). Then the following estimate holds:
		\begin{equation}\label{eq:4.8}
			\|\widehat{x}(\cdot)\|_{\rho}^2+\|\widehat{y}(\cdot)\|_{\rho}^2 \le K\mathbb{E}[\widehat{\mathrm{I}}].
		\end{equation}
		Here we denote $\widehat{x}(\cdot)=x(\cdot)-\bar{x}(\cdot) $,     $\widehat{y}(\cdot)=y(\cdot)-\bar{y}(\cdot) $ and
		\begin{eqnarray}\label{eq:4.9}
			\begin{aligned}
				\widehat{\mathrm{I}} = &  \sum_{k=0}^{\infty} e^{-\rho k} |b(k,\bar{\theta}_k) - \bar{b}(k,\bar{\theta}_k)|^2 + \sum_{k=0}^{\infty} e^{-\rho k} |\sigma(k,\bar{\theta}_k) - \bar{\sigma}(k,\bar{\theta}_k)|^2
				\\&+ \sum_{k=0}^{\infty} e^{-\rho (k+1)} |f(k+1,\bar{\theta}_k) - \bar{f}(k+1,\bar{\theta}_k)|^2 + |\Lambda (\bar{y}_0)-\bar{\Lambda}(\bar{y}_0)|^2,
			\end{aligned}
		\end{eqnarray}
		where \( \theta_k = (x_k, y_{k+1}', z_{k+1}') = (x_k, \mathbb{E} [y_{k+1}|\mathcal{F}_{k-1}], \mathbb{E} [y_{k+1}w_k|\mathcal{F}_{k-1}]) \) and \( \bar{\theta}_k = (\bar{x}_k, \bar{y}_{k+1}', \bar{z}_{k+1}') \), and the constant \( K \) is  identical to that in \eqref{eq:4.6}.
	\end{thm}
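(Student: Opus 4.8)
The plan is to establish unique solvability by the method of continuation and to deduce both estimates \eqref{eq:4.6} and \eqref{eq:4.8} from a single a priori inequality driven by a discrete summation-by-parts identity tailored to the weight $e^{-\rho k}$. For $\alpha\in[0,1]$ I would interpolate $\Lambda^{\alpha}=\alpha\Lambda+(1-\alpha)\Lambda_0$ and $\Gamma^{\alpha}=\alpha\Gamma+(1-\alpha)\Gamma_0$ between the target $(\Lambda,\Gamma)$ and a base $(\Lambda_0,\Gamma_0)$ chosen to realize the monotonicity \eqref{eq:4.3} exactly while being explicitly solvable. In Case~1 ($\mu>0,\nu=0$) one takes $\Lambda_0(y)=-\mu M^{\top}My$, $f_0=0$, $b_0(k,\theta)=-\mu e^{\rho}B_k^{\top}Q(k,y',z')$ and $\sigma_0(k,\theta)=-\mu e^{\rho}C_k^{\top}Q(k,y',z')$, so the base backward equation is trivial and the base forward equation is solved by Theorem~\ref{thm:2.1}; in Case~2 ($\mu=0,\nu>0$) one takes $\Lambda_0=0$, $b_0=\sigma_0=0$ and $f_0(k+1,\theta)=-\nu A_k^{\top}A_kx$, solving the forward part first and then the backward one by Theorem~\ref{thm:2.2}. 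Because the inequalities in \eqref{eq:4.2}--\eqref{eq:4.3} are stable under convex combinations (up to the harmless constant of Remark~\ref{rmk:4.1}(i)), every $(\Lambda^{\alpha},\Gamma^{\alpha})$ still satisfies Assumptions~\ref{ass:4.1}--\ref{ass:4.2}. I then let $\mathcal S\subset[0,1]$ be the set of $\alpha$ for which the $\alpha$-system, augmented by arbitrary forcing in the weighted spaces, is uniquely solvable together with an estimate of type \eqref{eq:4.8}; by construction $0\in\mathcal S$.

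The engine of the proof is the a priori identity. For two solutions of the \emph{same} system write $\widehat x=x-\bar x$, $\widehat y=y-\bar y$, $\widehat\theta_k=(\widehat x_k,\widehat y'_{k+1},\widehat z'_{k+1})$. Using $\widehat y_k=-[f(k+1,\theta_k)-f(k+1,\bar\theta_k)]$, the $\mathcal F_{k-1}$-measurability of $b(k,\cdot)$ and $\sigma(k,\cdot)$, the tower property and the definitions $y'_{k+1}=\mathbb E[y_{k+1}\mid\mathcal F_{k-1}]$, $z'_{k+1}=\mathbb E[y_{k+1}w_k\mid\mathcal F_{k-1}]$, I would verify the one-step relation
\[
\mathbb E\big[\langle\varUpsilon(k,\theta_k)-\varUpsilon(k,\bar\theta_k),\widehat\theta_k\rangle\big]
=-\mathbb E[\langle\widehat x_k,\widehat y_k\rangle]+e^{-\rho}\,\mathbb E[\langle\widehat x_{k+1},\widehat y_{k+1}\rangle],
\]
in which the factor $e^{-\rho}$ carried by $b,\sigma$ inside $\varUpsilon$ is exactly what makes the $e^{-\rho k}$-weighted sum telescope. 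Summing and sending the horizon to infinity, the boundary term $e^{-\rho N}\mathbb E[\langle\widehat x_N,\widehat y_N\rangle]$ vanishing because $(\widehat x,\widehat y)\in N^{2,\rho}$ forces its general term to zero, one obtains
\[
\sum_{k=0}^{\infty} e^{-\rho k}\,\mathbb E\big[\langle\varUpsilon(k,\theta_k)-\varUpsilon(k,\bar\theta_k),\widehat\theta_k\rangle\big]=-\mathbb E[\langle\widehat x_0,\widehat y_0\rangle].
\]
Bounding the left side by \eqref{eq:4.3} and the right side through the $\Lambda$-monotonicity at $\widehat x_0=\Lambda(y_0)-\Lambda(\bar y_0)$ squeezes both sides between a nonpositive and a nonnegative quantity, forcing $\nu|A_k\widehat x_k|\equiv0$, $\mu|B_k\widehat y'_{k+1}+C_k\widehat z'_{k+1}|\equiv0$ and $\mu|M\widehat y_0|\equiv0$ simultaneously. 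Feeding these into the domination bounds \eqref{eq:4.2} and then into Lemmas~\ref{lem:2.1}--\ref{lem:2.2} propagates $\widehat x\equiv0$ (forward from $\widehat x_0=0$) and $\widehat y\equiv0$ (backward), which is uniqueness.

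Repeating the identity for two \emph{different} coefficient sets $(\Lambda,\Gamma)$ and $(\bar\Lambda,\bar\Gamma)$ produces the same telescoping plus source terms equal to the coefficient differences evaluated at $\bar\theta_k$; splitting each $b(k,\theta_k)-\bar b(k,\bar\theta_k)$ into a same-coefficient part (handled by monotonicity and domination) and a coefficient-difference part, and controlling cross terms by Young's inequality, bounds $\mu|M\widehat y_0|^2$ and $\mu\sum|B\widehat y'+C\widehat z'|^2$ (resp.\ $\nu\sum|A\widehat x|^2$) by $\mathbb E[\widehat{\mathrm I}]$. Closing the loop with the forward estimate \eqref{eq:2.4} and the backward estimate \eqref{eq:3.4} then yields \eqref{eq:4.8}, whose specialization to $(\bar\Lambda,\bar\Gamma)=0$ (so that $\widehat{\mathrm I}=\mathrm I$) gives \eqref{eq:4.6}. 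With this estimate available the continuation step is standard: assuming $\alpha_0\in\mathcal S$, I solve the $(\alpha_0+\delta)$-system by Picard iteration, at each round treating $\delta(\Gamma-\Gamma_0)$ evaluated at the current iterate as forcing and inverting the $\alpha_0$-system; the a priori estimate makes this a strict contraction whenever $\delta\le\delta_0$, with $\delta_0$ depending only on the Lipschitz constants, $\mu$, $\nu$, the bounds of $A,B,C,M$ and $\rho$, but \emph{not} on $\alpha_0$. A uniform step length reaches $\alpha=1$ in finitely many steps, delivering FBS$\triangle$E $(\Lambda,\Gamma)$ with zero forcing. The constraints $\ln(4L_1^2)<\rho<-\ln(6L_2^2)$, nonempty precisely because $\ln(4L_1^2)<-\ln(6L_2^2)$, are what let Lemmas~\ref{lem:2.1} and~\ref{lem:2.2} act on one and the same weighted space.

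The step I expect to be the main obstacle is the rigorous justification of the telescoping identity in this infinite-horizon discrete setting: one must (a) track the filtration shift so that the $w_k$-cross terms average to zero under $\mathcal F_{k-1}$ while the $b,\sigma$-contributions pick up exactly the weight $e^{-\rho}$ that synchronizes with $\varUpsilon$, and (b) show the boundary term at infinity disappears, which is a genuinely infinite-horizon phenomenon resting on $N^{2,\rho}$-summability rather than on any terminal datum. A secondary difficulty is organizing the final absorption of cross terms separately in the two alternatives of Assumption~\ref{ass:4.2}(i), since in Case~1 the domination \eqref{eq:4.2} controls the $(y',z')$-variation of $b,\sigma$ whereas in Case~2 it controls the $x$-variation of $f$, so the constant tracking must be done case by case.
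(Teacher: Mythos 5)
Your proposal is correct and follows essentially the same route as the paper: the weighted telescoping identity $\sum_k e^{-\rho k}\mathbb{E}\langle\varUpsilon(k,\theta_k)-\varUpsilon(k,\bar\theta_k),\widehat\theta_k\rangle=-\mathbb{E}\langle\widehat x_0,\widehat y_0\rangle$ combined with the monotonicity and domination conditions in the two cases of Assumption \ref{ass:4.2}-(i), closed by Lemmas \ref{lem:2.1}--\ref{lem:2.2} and Young's inequality, and then continuation from the same decoupled base system with a uniform step length. The only (harmless) deviations are your $e^{\rho}$ normalization of $b_0,\sigma_0$ --- which in fact makes the base system satisfy \eqref{eq:4.3} exactly, a point the paper absorbs into Remark \ref{rmk:4.1}(i) --- and deriving uniqueness by the squeeze argument rather than by specializing the stability estimate \eqref{eq:4.8} to identical coefficients.
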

	\begin{rmk}\label{rmk:3.2}
		As a matter of fact, it is easy to see that under conditions $ \ln(4L^2_1)< -\ln(6L^2_2)$ and $ \ln(4L^2_1)<\rho <-\ln(6L^2_2)$,  Lemmas \ref{lem:2.1}-\ref{lem:2.2} and Theorems \ref{thm:2.1}-\ref{thm:2.2} can be applied latter.
	\end{rmk}
	Subsequently, we will focus our efforts on proving Theorem \ref{thm:4.1}. Given the symmetry of the monotonicity conditions \eqref{eq:4.3} and \eqref{eq:4.4}, we will only present the detailed proofs based on the monotonicity condition \eqref{eq:4.3}. Firstly, it is essential to derive an a priori estimate for the solution  FBS$\bigtriangleup $E \eqref{eq:1.1}.
	\begin{lem}\label{lem:4.2}
		Let the given coefficients \( (\Lambda,\Gamma) \) satisfy Assumptions \ref{ass:4.1}-\ref{ass:4.2}. Here we still assume that $ \ln(4L^2_1)< -\ln(6L^2_2)$ and  $ \ln(4L^2_1)<\rho <-\ln(6L^2_2)$. Assume that \( (x(\cdot),y(\cdot) ) \in N^{2,\rho}(\mathbb{T}_{\infty};\mathbb{R}^{2n}) \) is the solution of FBS$\bigtriangleup $E \( (\Lambda,\Gamma) \).  Furthermore, let \( (\bar{\Lambda} ,\bar{\Gamma}) \) be another set of coefficients satisfying Assumptions \ref{ass:4.1}-\ref{ass:4.2}  and suppose \( (\bar{x}(\cdot),\bar{y}(\cdot)) \in N^{2,\rho} (\mathbb{T}_{\infty};\mathbb{R}^{2n}) \) is the solution to FBS$\bigtriangleup $E \( (\bar{\Lambda} ,\bar{\Gamma}) \). Then the  estimate \eqref{eq:4.8} holds.
	\end{lem}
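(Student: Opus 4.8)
The plan is to run the discrete analogue of the continuation-method energy argument of Yu \cite{25}. Throughout write $\widehat{x}_k=x_k-\bar x_k$, $\widehat{y}_k=y_k-\bar y_k$, $\widehat b_k:=b(k,\theta_k)-\bar b(k,\bar\theta_k)$ (and likewise $\widehat\sigma_k,\widehat f_k$, so that $\widehat y_k=-\widehat f_k$), and split each such coefficient difference into a \emph{same-coefficient} part and a \emph{coefficient-gap} part, e.g.
\[
b(k,\theta_k)-\bar b(k,\bar\theta_k)=\underbrace{\bigl[b(k,\theta_k)-b(k,\bar\theta_k)\bigr]}_{=:\delta b_k}+\underbrace{\bigl[b(k,\bar\theta_k)-\bar b(k,\bar\theta_k)\bigr]}_{=:\Delta b_k},
\]
and analogously $\delta\sigma_k,\Delta\sigma_k$, $\delta f_k,\Delta f_k$, and $\delta\Lambda_0,\Delta\Lambda_0$ for the initial coupling. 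The weighted norms of the $\Delta$-terms are exactly the quantities assembled into $\widehat{\mathrm I}$ in \eqref{eq:4.9}.

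First I would establish the fundamental summation identity. Since $\widehat b_k,\widehat\sigma_k$ are $\mathcal{F}_{k-1}$-measurable and $w_k$ is a martingale difference with unit conditional variance, conditioning on $\mathcal{F}_{k-1}$ gives
\[
\mathbb{E}\bigl[\langle\widehat x_{k+1},\widehat y_{k+1}\rangle\bigr]=\mathbb{E}\bigl[\langle\widehat b_k,\widehat y'_{k+1}\rangle+\langle\widehat\sigma_k,\widehat z'_{k+1}\rangle\bigr],
\]
while $\widehat y_k=-\widehat f_k$ yields $\mathbb{E}[\langle\widehat x_k,\widehat y_k\rangle]=-\mathbb{E}[\langle\widehat x_k,\widehat f_k\rangle]$. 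I then form $\Sigma:=\sum_{k=0}^{\infty}e^{-\rho k}\,\mathbb{E}[\langle\varUpsilon(k,\theta_k)-\varUpsilon(k,\bar\theta_k),\widehat\theta_k\rangle]$, whose integrand is, by the definition of $\varUpsilon$, precisely $\langle\delta f_k,\widehat x_k\rangle+e^{-\rho}\langle\delta b_k,\widehat y'_{k+1}\rangle+e^{-\rho}\langle\delta\sigma_k,\widehat z'_{k+1}\rangle$. The role of the $e^{-\rho}$ weights inside $\varUpsilon$ is exactly that, after substituting the two identities above, the $\delta$-parts collapse into the telescoping series $-\sum_k e^{-\rho k}\mathbb{E}[\langle\widehat x_k,\widehat y_k\rangle]+\sum_k e^{-\rho(k+1)}\mathbb{E}[\langle\widehat x_{k+1},\widehat y_{k+1}\rangle]=-\mathbb{E}[\langle\widehat x_0,\widehat y_0\rangle]$; the boundary term at infinity vanishes because $e^{-\rho k}\mathbb{E}[\langle\widehat x_k,\widehat y_k\rangle]\to0$, a consequence of $(\widehat x,\widehat y)\in N^{2,\rho}(\mathbb{T}_{\infty};\mathbb{R}^{2n})$. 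What remains of $\Sigma$ is the explicit $\Delta$-remainder $-\sum_k e^{-\rho k}\mathbb{E}\langle\widehat x_k,\Delta f_k\rangle-\sum_k e^{-\rho(k+1)}\mathbb{E}[\langle\Delta b_k,\widehat y'_{k+1}\rangle+\langle\Delta\sigma_k,\widehat z'_{k+1}\rangle]$.

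Next I would insert the two monotonicity conditions of \eqref{eq:4.3}. The second one bounds $\Sigma\le-\nu\sum_k e^{-\rho k}\mathbb{E}|A_k\widehat x_k|^2-\mu\sum_k e^{-\rho k}\mathbb{E}|B_k\widehat y'_{k+1}+C_k\widehat z'_{k+1}|^2$, while the telescoped evaluation together with $\langle\Lambda(y_0)-\Lambda(\bar y_0),\widehat y_0\rangle\le-\mu|M\widehat y_0|^2$ gives the opposite-side lower bound on $\Sigma$. Combining them produces the coercivity inequality
\[
\mu\,\mathbb{E}|M\widehat y_0|^2+\nu\sum_{k=0}^\infty e^{-\rho k}\mathbb{E}|A_k\widehat x_k|^2+\mu\sum_{k=0}^\infty e^{-\rho k}\mathbb{E}|B_k\widehat y'_{k+1}+C_k\widehat z'_{k+1}|^2\le\mathcal R,
\]
where $\mathcal R=\mathbb{E}\langle\Delta\Lambda_0,\widehat y_0\rangle+\sum_k e^{-\rho k}\mathbb{E}\langle\widehat x_k,\Delta f_k\rangle+\sum_k e^{-\rho(k+1)}\mathbb{E}[\langle\Delta b_k,\widehat y'_{k+1}\rangle+\langle\Delta\sigma_k,\widehat z'_{k+1}\rangle]$ consists of cross terms pairing coefficient-gaps (which build $\widehat{\mathrm I}$) against solution differences. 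In parallel I would invoke the single-equation estimates: freezing $(y',z')$ and applying \eqref{eq:2.4} of Lemma \ref{lem:2.1} bounds $\|\widehat x\|_\rho^2$ by $\mathbb{E}|\widehat x_0|^2$ plus the weighted gaps of $b,\sigma$, and freezing $x$ and applying \eqref{eq:3.4} of Lemma \ref{lem:2.2} bounds $\|\widehat y\|_\rho^2$ by the weighted gap of $f$; in each, a further split into a domination part (controlled via \eqref{eq:4.2}) and a $\Delta$-part re-expresses these norms through the coercive quantities and $\widehat{\mathrm I}$.

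The hard part will be the final absorption, which is genuinely case-sensitive. Applying Young's inequality to $\mathcal R$ gives, for any $\eps>0$, a bound $\eps(\|\widehat x\|_\rho^2+\|\widehat y\|_\rho^2)+C_\eps\mathbb{E}[\widehat{\mathrm I}]$, and one must feed this back into the coercivity inequality and the two norm estimates and then choose $\eps$ small enough to absorb all solution-dependent terms to the left. This closes only because of the dichotomy in Assumption \ref{ass:4.2}(i): in Case 1 ($\mu>0,\nu=0$) the coercive quantities $|M\widehat y_0|^2$ and $|B_k\widehat y'_{k+1}+C_k\widehat z'_{k+1}|^2$, together with the domination of $\Lambda$ and of the $(y',z')$-variation of $b,\sigma$ in \eqref{eq:4.2}, are precisely what is needed to bound $\|\widehat x\|_\rho$, while $\|\widehat y\|_\rho$ is then controlled through the $f$-Lipschitz constant $L_7$; in Case 2 ($\mu=0,\nu>0$) the coercive quantity $|A_k\widehat x_k|^2$ and the domination of the $x$-variation of $f$ play the symmetric roles. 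The delicate bookkeeping is to verify that the constants from Lemmas \ref{lem:2.1}--\ref{lem:2.2} (which require $\ln(4L_1^2)<\rho<-\ln(6L_2^2)$), the parameters $\mu,\nu$, and the $L^\infty$-bounds of $A,B,C,M$ combine so that the absorption constant stays strictly below $1$; once this holds, \eqref{eq:4.8} follows, and by the symmetry of \eqref{eq:4.3} and \eqref{eq:4.4} only the case stemming from \eqref{eq:4.3} needs to be written out.
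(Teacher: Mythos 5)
Your proposal is correct and follows essentially the same route as the paper's proof: the same duality/telescoping identity with vanishing boundary term, the same application of the monotonicity conditions to obtain the coercivity inequality, the same use of the a priori estimates \eqref{eq:2.4} and \eqref{eq:3.4} combined with the domination versus Lipschitz conditions according to the two cases of Assumption \ref{ass:4.2}-(i), and the same final absorption via Young's inequality with a small parameter.
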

	
	\begin{proof}
		The positive constant $K$ could differ from line to line.
		Utilizing the estimate \eqref{eq:2.4} in Lemma \ref{lem:2.1}, we can get that
		\begin{equation}\label{eq:4.15}
			\begin{aligned}
				\|\widehat{x}(\cdot)\|_{\rho}^2
				\le K\mathbb{E}\Bigg\{
				&\big| \Lambda(y_0) - \bar{\Lambda}(\bar{y}_0)\big|^2+ \sum_{k=0}^{\infty}e^{-\rho k} \bigg| b(k, \bar{x}_k, y_{k+1}', z'_{k+1}) - \bar{b}(k, \bar{x}_k, \bar{y}_{k+1}', \bar{z}_{k+1}')  \bigg|^2  \\
				&+ \sum_{k=0}^{\infty}e^{-\rho k} \Bigg| \sigma(k, \bar{x}_k, y_{k+1}', z_{k+1}') - \bar{\sigma}(k, \bar{x}_k, \bar{y}_{k+1}', \bar{z}_{k+1}') \Bigg|^2
				\Bigg\}.
			\end{aligned}
		\end{equation}
		Likewise, by applying the estimate \eqref{eq:3.4} in Lemma \ref{lem:2.2},  we can get
		
		\begin{equation}\label{eq:4.16}
			\begin{aligned}
				\|\widehat{y}(\cdot)\|_{\rho}^2
				\le & K\mathbb{E}\Bigg\{ \sum_{k=0}^{\infty}e^{-\rho (k+1)} \bigg| f(k+1, x_k, \bar{y}_{k+1}', \bar{z}_{k+1}') - \bar{f}(k+1, \bar{x}_k, \bar{y}_{k+1}', \bar{z}_{k+1}')\bigg|^2  \Bigg\}.
			\end{aligned}
		\end{equation}
		We take into account the equation
		\begin{equation} \label{eq:4.17}
			\begin{aligned}
				\mathbb{E}\left[e^{-\rho N}\left\langle\widehat{x}_N, \widehat{y}_N\right\rangle - e^{-\rho 0}\left\langle\widehat{x}_0, \widehat{y}_0\right\rangle\right]
				= & \sum_{k=0}^{N-1} \mathbb{E}\bigg[e^{-\rho (k+1)}\left\langle \widehat{x}_{k+1}, \widehat{y}_{k+1}\right\rangle - e^{-\rho k}\left\langle \widehat{x}_{k}, \widehat{y}_{k}\right\rangle\bigg],
			\end{aligned}
		\end{equation}
		where the left side is equal to
		
		\begin{equation} \label{eq:4.18}
			\begin{aligned}
				\mathbb{E}\left[e^{-\rho N}\left\langle\widehat{x}_N, \widehat{y}_N\right\rangle - \left\langle\widehat{x}_0, \widehat{y}_0\right\rangle\right]
				=  \mathbb{E}\left[e^{-\rho N}\left\langle\widehat{x}_N,\widehat{y}_N \right\rangle - \left\langle\Lambda\left(y_0\right) - \bar{\Lambda}\left(\bar{y}_0\right), \widehat{y}_0\right\rangle \right]
			\end{aligned}
		\end{equation}
		and the right side is equal to
		\begin{equation} \label{eq:4.19}
			\begin{aligned}
				&\displaystyle \sum_{k=0}^{N-1}\mathbb E\bigg[e^{-\rho(k+1)}\left \langle \widehat{x}_{k+1},\widehat{y}_{k+1}
				\right \rangle-e^{-\rho k}\left \langle \widehat{x}_{k},\widehat{y}_{k} \right \rangle\bigg]
				\\=& \sum_{k=0}^{N-1} \mathbb{E}\bigg[\mathbb{E}\left[e^{-\rho(k+1)}\left\langle\widehat{x}_{k+1}, \widehat{y}_{k+1}\right\rangle \mid \mathcal{F}_{k-1}\right]-e^{-\rho k}\left\langle\widehat{x}_k, \widehat{y}_k\right\rangle\bigg]
				\\=& \sum_{k=0}^{N-1} \mathbb{E}\bigg[\mathbb{E}\left[e^{-\rho (k+1)} \left\langle\widehat{b}_k+\widehat{\sigma}_k\omega_k, \widehat{y}_{k+1}\right\rangle  \mid \mathcal{F}_{k-1}\right]-e^{-\rho k}\left\langle\widehat{x}_k, \widehat{y}_k\right\rangle\bigg]
				\\ =& \sum_{k=0}^{N-1} \mathbb{E}\left[e^{-\rho (k+1)}\left( \left\langle\widehat{b}_k, \widehat{y}_{k+1}^{\prime}\right\rangle+\left\langle\widehat{\sigma}_k, \widehat{z}_{k+1}^{\prime}\right\rangle\right) +e^{-\rho k} \left\langle\widehat{f}_{k+1}, \widehat{x}_k\right\rangle \right]\\
				=&\sum_{k=0}^{N-1}\mathbb{E}\left[e^{-\rho k}\left \langle \varUpsilon (k,\theta_k )-\bar{\varUpsilon}(k,\bar{\theta}_k ) ,\widehat{\theta}_k \right \rangle \right],
			\end{aligned}
		\end{equation}
		where $\widehat{b}_k=b(k,\theta_k) - \bar{b}(k,\bar{\theta}_k)$, and similarly for $\widehat{\sigma}, \widehat{f}$, etc.
		Putting \eqref{eq:4.18} and \eqref{eq:4.19} into \eqref{eq:4.17}, we get that
		\begin{equation}\label{eq:4.20}
			\begin{aligned}
				\mathbb{E}\left[e^{-\rho N}\left\langle\widehat{x}_N,\widehat{y}_N \right\rangle - \left\langle\Lambda\left(y_0\right) - \bar{\Lambda}\left(\bar{y}_0\right), \widehat{y}_0\right\rangle \right] =\sum_{k=0}^{N-1}\mathbb{E}\left[e^{-\rho k}\left \langle \varUpsilon (k,\theta_k )-\bar{\varUpsilon}(k,\bar{\theta}_k ) ,\widehat{\theta}_k \right \rangle \right].
			\end{aligned}
		\end{equation}
		It is easy to verify that
		\begin{equation*}
			\lim_{N \to \infty}e^{-\rho N}|\left\langle\widehat{x}_N,\widehat{y}_N \right\rangle|\leq \lim_{N \to \infty}e^{-\rho N}|\widehat{x}_N||\widehat{y}_N|\leq \lim_{N \to \infty}e^{-\rho N}\left(|\widehat{x}_N|^{2}+|\widehat{y}_N|^{2}\right)=0.
		\end{equation*}
		Therefore,
		\begin{equation*}
			\lim_{N \to \infty}e^{-\rho N}\left\langle\widehat{x}_N,\widehat{y}_N \right\rangle=0.
		\end{equation*}
		Let $N\rightarrow\infty$ on both sides of \eqref{eq:4.20}. We can get that
		\begin{equation}\label{eq:4.21}
			\begin{aligned}
				\mathbb{E}\left\lbrace \sum_{k=0}^{\infty}e^{-\rho k}\left \langle \varUpsilon (k,\theta_k )-\bar{\varUpsilon}(k,\bar{\theta}_k ) ,\widehat{\theta}_k \right \rangle +\left\langle\Lambda\left(y_0\right) - \bar{\Lambda}\left(\bar{y}_0\right), \widehat{y}_0\right\rangle\right\rbrace =0 .
			\end{aligned}
		\end{equation}
		Consequently, in conjunction with the monotonicity conditions in Assumption \ref{ass:4.2}-(iii), \eqref{eq:4.21} turns into
		\begin{equation}\label{eq:4.22}
			\begin{aligned}
				&\quad\mathbb{E}\Bigg \{ \mu |M\widehat{y}_0|^2+\nu\sum_{k=0}^{\infty}e^{-\rho k}\big|P(k,\widehat{x}_k)\big|^2
				+\mu \sum_{k=0}^{\infty}e^{-\rho k}\big |Q(k,\widehat y_{k+1}',\widehat z_{k+1}')\big |^2\Bigg \}\\
				&\le \mathbb{E}\Bigg \{\sum_{k=0}^{\infty}e^{-\rho k}\left \langle \varUpsilon (k,\bar{\theta}_k )-\bar{\varUpsilon}(k,\bar{\theta}_k ) ,\widehat{\theta}_k\right\rangle+\left\langle\Lambda\left(\bar{y}_0\right) - \bar{\Lambda}\left(\bar{y}_0\right), \widehat{y}_0\right\rangle \Bigg \}.
			\end{aligned}
		\end{equation}
		The subsequent proofs will be split into two cases in accordance with Assumption \ref{ass:4.2}-(i).\\
		\textbf{Case 1}: $\mu>0$ and $\nu=0$. By making use of the domination conditions \eqref{eq:4.2} in Assumption \ref{ass:4.2}-(ii) for the estimate \eqref{eq:4.15}, we obtain
		\begin{equation}\label{eq:4.23}
			\begin{aligned}
				\|\widehat{x}(\cdot)\|_{\rho}^2\le &K\mathbb E\Bigg \{|\Lambda\left(\bar{y}_0\right) - \bar{\Lambda}\left(\bar{y}_0\right) |^2+|M\widehat{y}_0|^2+\sum_{k=0}^{\infty}e^{-\rho k}|b(k, \bar{\theta}_k) - \bar{b}(k, \bar{\theta}_k) |^2\\
				&+\sum_{k=0}^{\infty}e^{-\rho k}|\sigma(k, \bar{\theta}_k) - \bar{\sigma}(k, \bar{\theta}_k) |^2+\sum_{k=0}^{\infty}e^{-\rho k}\big |Q(k,\widehat y_{k+1}',\widehat z_{k+1}')\big |^2\Bigg \}.
			\end{aligned}
		\end{equation}
		By applying the Lipschitz condition to the estimate given in \eqref{eq:4.16}, we have
		\begin{equation}\label{eq:4.24}
			\begin{aligned}
				\|\widehat{y}(\cdot)\|_{\rho}^2\le K\mathbb{E}\bigg \{\sum_{k=0}^{\infty}e^{-\rho (k+1)}|f(k+1, \bar{\theta}_k) - \bar{f}(k+1, \bar{\theta}_k)|^2
				+\displaystyle \sum_{k=0}^{\infty} e^{-\rho (k+1)}|\widehat{x}_k |^2\bigg\}.
			\end{aligned}
		\end{equation}
		Consequently, the combination of  \eqref{eq:4.23} and \eqref{eq:4.24} results in the following inequality:
		\begin{equation}\label{eq:4.25}
			\|\widehat{x}(\cdot)\|_{\rho}^2+\|\widehat{y}(\cdot)\|_{\rho}^2\le K\mathbb{E}\Bigg \{\widehat{\mathrm{I}}+\sum_{k=0}^{\infty}e^{-\rho k}\big |Q(k,\widehat y_{k+1}',\widehat z_{k+1}')\big |^2+|M\widehat{y_0}|^2\Bigg \},	
		\end{equation}
		where $\widehat{\mathrm{I}}$ is defined by \eqref{eq:4.9}. Finally, by combining \eqref{eq:4.22} and \eqref{eq:4.25} and using the inequality $ab\le\frac{1}{4\varepsilon}a^2+\varepsilon b^2$, we can derive the following inequality:
		\begin{equation}\label{eq:4.26}
			\begin{aligned}
				&\quad	\|\widehat{x}(\cdot)\|_{\rho}^2+\|\widehat{y}(\cdot)\|_{\rho}^2\\
				&\le K\mathbb{E}\Bigg \{\widehat{\mathrm{I}}
				+\sum_{k=0}^{\infty}e^{-\rho k}\left \langle \varUpsilon (k,\bar{\theta}_k )-\bar{\varUpsilon}(k,\bar{\theta}_k ) ,\widehat{\theta}_k\right\rangle+\left\langle\Lambda\left(\bar{y}_0\right) - \bar{\Lambda}\left(\bar{y}_0\right), \widehat{y}_0\right\rangle\Bigg \}
				\\&\le K\mathbb{E}\Bigg \{\widehat{\mathrm{I}}+2\varepsilon\left [ \|\widehat{x}(\cdot)\|_{\rho}^2+\|\widehat{y}(\cdot)\|_{\rho}^2\right ]\Bigg\}.
			\end{aligned}
		\end{equation}
		By taking $\varepsilon$ small enough such that $2K\varepsilon<1$, the desired estimate \eqref{eq:4.8} can be readily obtained. This completes the proof in this case.\\
		\textbf{Case 2}: $\mu=0$ and $\nu>0$. In a different way, we make use of the Lipschitz conditions for the estimate \eqref{eq:4.15} so as to obtain
		\begin{equation}\label{eq:4.27}
			\begin{aligned}
				\|\widehat{x}(\cdot)\|_{\rho}^2\le& K\mathbb{E}\Bigg \{\displaystyle|\Lambda\left(\bar{y}_0\right) - \bar{\Lambda}\left(\bar{y}_0\right)|^2
				+ \sum_{k=0}^{\infty}e^{-\rho k} |\widehat{y}_k|^2+\sum_{k=0}^{\infty}e^{-\rho k}|b(k, \bar{\theta}_k) - \bar{b}(k, \bar{\theta}_k) |^2\\
				&+\sum_{k=0}^{\infty}e^{-\rho k}|\sigma(k, \bar{\theta}_k) - \bar{\sigma}(k, \bar{\theta}_k) |^2\Bigg \}.
			\end{aligned}
		\end{equation}
		Based on the domination conditions \eqref{eq:4.2} in Assumption \ref{ass:4.2}-(ii), we infer from \eqref{eq:4.16} and get
		\begin{equation}\label{eq:4.28}
			\begin{aligned}
				&\quad\|\widehat{y}(\cdot)\|_{\rho}^2
				\le K\mathbb{E}\bigg \{ \sum_{k=0}^{\infty}e^{-\rho k} |P(k,\widehat{x}_k)|^2+\sum_{k=0}^{\infty}e^{-\rho (k+1)}|f(k+1, \bar{\theta}_k) - \bar{f}(k+1, \bar{\theta}_k) |^2
				\Bigg \}.
			\end{aligned}
		\end{equation}
		Consequently, when we combine \eqref{eq:4.27} and \eqref{eq:4.28}, we can get that
		\begin{equation}\label{eq:4.29}
			\begin{aligned}
				\|\widehat{x}(\cdot)\|_{\rho}^2+\|\widehat{y}(\cdot)\|_{\rho}^2
				\le K\mathbb{E}\Bigg \{\widehat{\mathrm{I} }+\sum_{k=0}^{\infty}e^{-\rho k}|P(k,\widehat{x}_k)|^2\Bigg \},
			\end{aligned}
		\end{equation}
		where $\widehat{\mathrm{I}}$ is defined by \eqref{eq:4.8}. Ultimately, \eqref{eq:4.22} and \eqref{eq:4.29} work in conjunction to derive
		\begin{equation}
			\begin{aligned}
				\quad\|\widehat{x}(\cdot)\|_{\rho}^2+\|\widehat{y}(\cdot)\|_{\rho}^2\le K\mathbb{E}\Bigg \{\widehat{\mathrm{I}}
				+\sum_{k=0}^{\infty}e^{-\rho k}\left \langle \varUpsilon (k,\bar{\theta}_k )-\bar{\varUpsilon}(k,\bar{\theta}_k ) ,\widehat{\theta}_k\right\rangle+\left\langle\Lambda\left(\bar{y}_0\right) - \bar{\Lambda}\left(\bar{y}_0\right), \widehat{y}_0\right\rangle\Bigg \}.
			\end{aligned}
		\end{equation}
		The rest of the proof is the same as \eqref{eq:4.26} in Case 1 and thus the proof in this case is finished. As a result, the entire proof of the lemma has been accomplished.
	\end{proof}
	Given any $\big(\xi,\phi(\cdot) \big)\in \mathcal{H}^{2,\rho}(\mathbb{T}_{\infty})$ where $\phi(\cdot) =(\varphi(\cdot) ,\psi(\cdot) ,\gamma(\cdot))$, we proceed to introduce  a family of FBS$\bigtriangleup $Es parameterized by $\alpha\in[0,1]$ as follows:
	\begin{equation}\label{eq:4.10}
		\left\{\begin{aligned}
			x^{\alpha}_{k+1} =&\big[b^{\alpha}(k,\theta_{k}^{\alpha}) +\psi_k\big]+\big[\sigma^{\alpha}(k,\theta_{k}^{\alpha}) +\gamma_k\big] \omega_k,    \\
			y_k^{\alpha}=& -\big[f^{\alpha}(k+1,\theta_k^{\alpha})+\varphi_k\big], \quad k \in\mathbb{T}_{\infty},\\
			x_0^{\alpha} =&\Lambda^\alpha(y_0^{\alpha})+\xi,\\
		\end{aligned}\right.
	\end{equation}
	where $\theta ^\alpha _k={(x^\alpha_k},{y'^\alpha_{k+1}},{z'^\alpha_{k+1}})=(x^{\alpha }_k,\mathbb{E} [y^{\alpha}_{k+1}|\mathcal{F}_{k-1}]
	,\mathbb{E} [y^{\alpha}_{k+1} \omega_k|\mathcal{F}_{k-1}]
	).$ At this point, we have utilized the following notations: for any $(k,\omega,y,\theta)\in\mathbb{T}_{\infty}\times\Omega\times \mathbb{R}^{n} \times \mathbb{R}^{3n}$,\\
	\begin{equation}\label{eq:4.11}
		\left\{\begin{aligned}
			&\Lambda^{\alpha}(y)=\alpha\Lambda(y)-(1-\alpha)\mu M^\top My,\\
			& f^\alpha (k+1,\theta)=\alpha f(k+1,\theta )-(1-\alpha)\nu\bigg[A_k^{\top}P(k,x)\bigg],\\
			& b^\alpha (k,\theta)=\alpha b(k,\theta)-(1-\alpha)\mu\bigg[ B_k^\top Q(k,y',z')\bigg],\\
			& \sigma ^\alpha (k,\theta)=\alpha \sigma (k,\theta )-(1-\alpha)\mu \bigg[C_k^\top Q(k,y',z') \bigg],\\
		\end{aligned}\right.
	\end{equation}
	where $P(k,x)$ ,$Q(k,y',z')$ are defined by \eqref{eq:4.5}.
	We denote $\Gamma ^\alpha (k,\theta ):=(f^\alpha(k+1,\theta ) ,b^\alpha(k,\theta ) ,\sigma ^\alpha
	(k,\theta ))$.
	For the sake of generality, we  can chose  the Lipschitz constants $L,\ L_3,\ L_4,\ L_5,\ L_6,\ L_7$ of the
	coefficients $(\Lambda,\Gamma)$ which are larger than
	\begin{equation}
		\max \{\mu, \nu\}\left(|M|+\|A(\cdot)\|_{L_{\mathbb{F}}^{\infty}(\mathbb{T}_{\infty}; \mathbb{R}^{m\times n})}+\|B(\cdot)\|_{L_{\mathbb{F}}^{\infty}(\mathbb{T}_{\infty}; \mathbb{R}^{m\times n})}+\|C(\cdot)\|_{L_{\mathbb{F}}^{\infty}(\mathbb{T}_{\infty}; \mathbb{R}^{m\times n})}\right)^2\nonumber
	\end{equation}
	and the constants $\mu$ and $\nu$ in Assumption \ref{ass:4.2}-(i) satisfy the following condition:
	\begin{equation}
		\begin{aligned}
			(\frac{1}{\mu })^2,(\frac{1}{\nu})^2\ge \text{max}\Big\{|M|,\|A(\cdot)\|_{L_{\mathbb{F}}^{\infty}(\mathbb{T}_{\infty}; \mathbb{R}^{m\times n})},\|B(\cdot)\|_{L_{\mathbb{F}}^{\infty}(\mathbb{T}_{\infty}; \mathbb{R}^{m\times n})},\|C(\cdot)\|_{L_{\mathbb{F}}^{\infty}(\mathbb{T}_{\infty}; \mathbb{R}^{m\times n})}\Big\}.\nonumber
		\end{aligned}
	\end{equation}
	Then, it can be easily confirmed that for any $\alpha\in[0,1]$, the new coefficients $(\Lambda^\alpha,\Gamma^\alpha)$ also satisfy Assumptions \ref{ass:4.1}-\ref{ass:4.2}, possessing the same Lipschitz constants, $\mu$, $\nu$, $M$, $A(\cdot)$,  $B(\cdot)$, $C(\cdot)$ as the original coefficients $(\Lambda,\Gamma)$.
	
	It is evident that when $\alpha=0$, FBS$\bigtriangleup $E \eqref{eq:4.10} can be rewritten in the form below:
	\begin{equation}\label{eq:4.12}
		\left\{\begin{aligned}
			x_{k+1}
			^{0}=&\bigg \{-\mu \Big[B_k^\top  Q(k,y'^{0}_{k+1},z'^{0}_{k+1})\Big]+\psi_k\bigg \}
			+\bigg \{-\mu\Big[ C_k^\top Q(k,y'^{0}_{k+1},z'^{0}_{k+1})\Big]+\gamma_k \}\omega_k\\
			y_{k}^0=&-\bigg\{-\nu\Big[A_k^{\top}P(k,x^{0}_k)\Big]+\varphi_k\bigg\},\quad k\in \mathbb{T}_{\infty},\\
			x_0^{0} =& -\mu M^\top M y^{0}_0+\xi.
		\end{aligned}\right.
	\end{equation}
	FBS$\bigtriangleup $E \eqref{eq:4.12} is in a decoupled form.  Moreover, when Assumption \ref{ass:4.2}-(i)-Case 1 is satisfied (i.e., $\mu>0$ and $\nu=0$), we are able to first solve for $y^0(\cdot)$ from the backward equation. After that, we can substitute $y^0(\cdot)$ into the forward equation and then solve for $x^0(\cdot)$. Likewise, when Assumption \ref{ass:4.2}-(i)-Case 2 is satisfied (i.e., $\mu=0$ and $\nu>0$), the forward and backward equations can be solved sequentially. Briefly speaking, when $\alpha=0$, under Assumptions \ref{ass:4.1}-\ref{ass:4.2}, FBS$\bigtriangleup $E \eqref{eq:4.12} has a unique solution $(x^0(\cdot),y^0(\cdot))\in N^{2,\rho}(\mathbb{T}_{\infty};\mathbb{R}^{2n} )$.
	
	Obviously, when $\alpha=1$ and $(\xi,\phi(\cdot) )$ vanish, FBS$\bigtriangleup $E \eqref{eq:4.10} and FBS$\bigtriangleup $E \eqref{eq:1.1} are completely identical.  Subsequently, we will demonstrate that if for some $\alpha_{0}\in[0,1)$, FBS$\bigtriangleup $E\eqref{eq:4.10} has a unique solution for any $\big(\xi ,\phi(\cdot)\big)\in \mathcal{H}^{2,\rho}(\mathbb{T}_{\infty})$, then there exists a fixed step length $\delta_0>0$ such that the same conclusion continues to hold for any $\alpha\in[\alpha_{0},\alpha_{0}+\delta_0]$. As soon as this is proved to hold, we can incrementally increase the parameter $\alpha$ until $\alpha=1$. This approach is known as the method of continuation, which was first introduced by Hu and Peng \cite{5}.
	
	Next, we prove a continuation lemma which is based on the prior estimate in Lemma \ref{lem:4.2}.
	\begin{lem}\label{lem:4.3}
		Let Assumptions  \ref{ass:4.1}-\ref{ass:4.2}, the conditions $ \ln(4L^2_1)< -\ln(6L^2_2)$ and  $ \ln(4L^2_1)<\rho <-\ln(6L^2_2)$ hold. If for some $\alpha_{0}\in[0,1)$, FBS$\bigtriangleup $E \eqref{eq:4.10} admits a unique solution $(x(\cdot),y(\cdot) )\in N^{2,\rho}(\mathbb{T}_{\infty};\mathbb{R}^{2n} )$ for any $(\xi,\phi(\cdot))\in \mathcal{H}^{2,\rho}(\mathbb{T}_{\infty})$, then there exists an absolute constant $\delta_0>0$ such that FBS$\bigtriangleup $E \eqref{eq:4.10} has a unique solution   for $\alpha=\alpha_{0}+\delta$ with $\delta\in(0,\delta_0]$ and $\alpha\le1$.
	\end{lem}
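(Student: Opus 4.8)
The plan is to run the method of continuation via a Banach fixed-point argument: under the induction hypothesis that FBS$\bigtriangleup$E \eqref{eq:4.10} is uniquely solvable at $\alpha_0$ for \emph{every} datum $(\xi,\phi(\cdot))\in\mathcal{H}^{2,\rho}(\mathbb{T}_{\infty})$, I realize the $(\alpha_0+\delta)$-system as a fixed point of a map that, at each iteration, solves an $\alpha_0$-system whose data is suitably modified. The starting point is the elementary identity relating the coefficients at the two parameters. From \eqref{eq:4.11}, for every $\theta=(x,y',z')$ and $y$,
\begin{equation*}
	\begin{aligned}
		\Lambda^{\alpha_0+\delta}(y) &= \Lambda^{\alpha_0}(y) + \delta\bigl[\Lambda(y) + \mu M^\top M y\bigr],\\
		f^{\alpha_0+\delta}(k+1,\theta) &= f^{\alpha_0}(k+1,\theta) + \delta\bigl[f(k+1,\theta) + \nu A_k^\top P(k,x)\bigr],\\
		b^{\alpha_0+\delta}(k,\theta) &= b^{\alpha_0}(k,\theta) + \delta\bigl[b(k,\theta) + \mu B_k^\top Q(k,y',z')\bigr],\\
		\sigma^{\alpha_0+\delta}(k,\theta) &= \sigma^{\alpha_0}(k,\theta) + \delta\bigl[\sigma(k,\theta) + \mu C_k^\top Q(k,y',z')\bigr].
	\end{aligned}
\end{equation*}

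Given a guess $(X(\cdot),Y(\cdot))\in N^{2,\rho}(\mathbb{T}_{\infty};\mathbb{R}^{2n})$, write $Y'_{k+1}=\mathbb{E}[Y_{k+1}\mid\mathcal{F}_{k-1}]$, $Z'_{k+1}=\mathbb{E}[Y_{k+1}\omega_k\mid\mathcal{F}_{k-1}]$ and $\Theta_k=(X_k,Y'_{k+1},Z'_{k+1})$. I would define $(x(\cdot),y(\cdot))=\mathcal{T}(X(\cdot),Y(\cdot))$ as the unique solution of the $\alpha_0$-system \eqref{eq:4.10} in which the original datum $(\xi,\phi(\cdot))$ is replaced by
\begin{equation*}
	\begin{aligned}
		\widetilde{\xi} &= \xi + \delta\bigl[\Lambda(Y_0) + \mu M^\top M Y_0\bigr], &
		\widetilde{\varphi}_k &= \varphi_k + \delta\bigl[f(k+1,\Theta_k) + \nu A_k^\top P(k,X_k)\bigr],\\
		\widetilde{\psi}_k &= \psi_k + \delta\bigl[b(k,\Theta_k) + \mu B_k^\top Q(k,Y'_{k+1},Z'_{k+1})\bigr], &
		\widetilde{\gamma}_k &= \gamma_k + \delta\bigl[\sigma(k,\Theta_k) + \mu C_k^\top Q(k,Y'_{k+1},Z'_{k+1})\bigr].
	\end{aligned}
\end{equation*}
The map $\mathcal{T}$ is well defined by the induction hypothesis, and the displayed coefficient identities show that any fixed point of $\mathcal{T}$ is exactly a solution of \eqref{eq:4.10} at $\alpha=\alpha_0+\delta$. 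Membership of the modified datum in $\mathcal{H}^{2,\rho}(\mathbb{T}_{\infty})$ follows from Assumption \ref{ass:4.1}, the uniform bounds on $A(\cdot),B(\cdot),C(\cdot),M$, and the conditional-expectation estimates \eqref{eq:3.5}--\eqref{eq:3.6}, which control $Y'_{k+1},Z'_{k+1}$ by $Y_{k+1}$.

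Next I would show $\mathcal{T}$ is a strict contraction. For two guesses $(X^1,Y^1)$, $(X^2,Y^2)$ with images $(x^1,y^1)$, $(x^2,y^2)$, the two images solve $\alpha_0$-systems whose $\alpha_0$-coefficients coincide and which differ only through the inhomogeneous data. Applying the a priori estimate \eqref{eq:4.8} of Lemma \ref{lem:4.2} (with both coefficient sets taken to be $(\Lambda^{\alpha_0},\Gamma^{\alpha_0})$), the quantity $\widehat{\mathrm{I}}$ in \eqref{eq:4.9} collapses to the $\mathcal{H}^{2,\rho}$-norm of the data differences, each of which carries a factor $\delta$. Using the Lipschitz continuity of $\Lambda,b,\sigma,f$, the bounds on $A(\cdot),B(\cdot),C(\cdot),M$, and the contraction estimates \eqref{eq:3.7}--\eqref{eq:3.8} for the conditional expectations, I would obtain
\begin{equation*}
	\|x^1(\cdot)-x^2(\cdot)\|_{\rho}^2 + \|y^1(\cdot)-y^2(\cdot)\|_{\rho}^2 \le KC\,\delta^2\bigl(\|X^1(\cdot)-X^2(\cdot)\|_{\rho}^2 + \|Y^1(\cdot)-Y^2(\cdot)\|_{\rho}^2\bigr),
\end{equation*}
where $K$ is the constant of Lemma \ref{lem:4.2} and $C$ depends only on the Lipschitz constants and the bounds of $A,B,C,M$. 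Choosing $\delta_0>0$ with $KC\delta_0^2\le\frac14$ makes $\mathcal{T}$ a contraction on $N^{2,\rho}(\mathbb{T}_{\infty};\mathbb{R}^{2n})$ for every $\delta\in(0,\delta_0]$, and Banach's theorem then furnishes the unique solution of \eqref{eq:4.10} at $\alpha_0+\delta$.

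The crucial point, and the reason the continuation terminates after finitely many steps, is that $\delta_0$ must be \emph{absolute}, i.e. independent of $\alpha_0$. This holds because the constant $K$ of Lemma \ref{lem:4.2} depends only on the Lipschitz constants, $\mu$, $\nu$, and the bounds of $A,B,C,M$, all of which are shared by $(\Lambda^{\alpha},\Gamma^{\alpha})$ for every $\alpha\in[0,1]$ (as recorded after \eqref{eq:4.11}); $C$ is likewise uniform in $\alpha_0$. The one step needing care is the control of the $z'$-components: the data differences involve $\mathbb{E}\bigl[(Y^1_{k+1}-Y^2_{k+1})\omega_k\mid\mathcal{F}_{k-1}\bigr]$, whose $L^2$-norm must be dominated by that of $Y^1_{k+1}-Y^2_{k+1}$ through \eqref{eq:3.8}; after the $e^{-\rho(k+1)}$-weighting and summation this yields the clean $\delta^2\bigl(\|X^1-X^2\|_{\rho}^2+\|Y^1-Y^2\|_{\rho}^2\bigr)$ bound above.
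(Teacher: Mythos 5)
Your proposal is correct and follows essentially the same route as the paper: you build the identical fixed-point map (solve the $\alpha_0$-system with the data shifted by $\delta$ times the increment terms $\Lambda+\mu M^\top M$, $f+\nu A^\top P$, $b+\mu B^\top Q$, $\sigma+\mu C^\top Q$), invoke the a priori estimate of Lemma \ref{lem:4.2} to get a contraction factor of order $\delta^2$, and observe that the constant is uniform in $\alpha_0$ because $(\Lambda^{\alpha},\Gamma^{\alpha})$ share the same Lipschitz constants and bounds for all $\alpha\in[0,1]$. The only differences are cosmetic (you name the input $(X,Y)$ and the output $(x,y)$, the paper does the reverse, and your explicit coefficient identities make the fixed-point equivalence slightly more transparent than the paper's presentation).
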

	\begin{proof}
		Let $\delta_0>0$ be determined below. For any $(x(\cdot),y(\cdot))\in N^{2,\rho}(\mathbb{T}_{\infty};\mathbb{R}^{2n} )$, $\theta_k=(x_k,y'_{k+1},z'_{k+1}) $ and $ \Theta_k=(X_k,Y'_{k+1},Z'_{k+1})$,we introduce
		the following FBS$\bigtriangleup $E with unknow $(X(\cdot),Y(\cdot))\in N^{2,\rho}(\mathbb{T}_{\infty};\mathbb{R}^{2n} )$ :
		\begin{equation}\label{eq:4.31}
			\left\{\begin{aligned}
				X_{k+1}  =&\Big\{-(1-\alpha_0)\mu\Big[B_k^{\top}Q(k,  Y_{k+1}', Z'_{k+1})\Big]+\alpha_0 b(k,\Theta_k)+\widetilde{\psi}_k\Big\}
				\\&+\Big\{-(1-\alpha_0)\mu\Big[C_k^{\top}Q(k, Y_{k+1}', Z'_{k+1})\Big]+\alpha_0 \sigma(k,\Theta_k)+\widetilde{\gamma}_k\Big\}\omega_k\\
				Y_k =&-\Big\{-(1-\alpha_0)\nu\Big[A_k^{\top}P(k,X_k)\Big]+\alpha_0f(k+1, \Theta_k)+\widetilde{\varphi}_k\Big\}, \quad k \in\mathbb{T}_{\infty},\\
				X_0=&\alpha_{0}\Lambda(Y_0)-(1-\alpha_{0})\mu M^{\top}MY_0+\widetilde{\xi},
			\end{aligned}\right.
		\end{equation}
		where
		\begin{equation}\label{eq:4.32}
			\left\{\begin{aligned}
				&\widetilde{\psi}_k=\psi_k +\delta b(k,\theta _k)+\delta \mu  \Big [B_k^\top Q(k, y_{k+1}', z'_{k+1})\Big ],\\
				&\widetilde{\gamma}_k=\gamma_k +\delta \sigma(k,\theta_k)+\delta \mu  \Big [C_k^\top Q(k, y_{k+1}', z'_{k+1}))\Big ],\\
				&\widetilde{\varphi}_k=\varphi_k+\delta f(k+1,\theta _k)+\delta \nu\Big[A_k^\top P(k,x_k)\Big],\\
				&\widetilde{\xi}=\xi+\delta\Lambda(y_0)+\delta\mu M^\top My_0,\\
				&Z_{k+1}=Y_{k+1}\omega_k,\\
				&Y'_{k+1}=[Y_{k+1}|\mathcal{F}_{k-1}],\\
				&Z'_{k+1}=[Y_{k+1}\omega_k|\mathcal{F}_{k-1}].
			\end{aligned}\right.
		\end{equation}
		Moreover, we also use the notation   $\widetilde{\phi}(\cdot) =(\widetilde{\varphi}(\cdot)  ,\widetilde{\psi}(\cdot) ,\widetilde{\gamma}(\cdot))$. Then it's straightforward to verify that $(\widetilde{\xi},\widetilde{\phi}(\cdot))\in\mathcal{H}^{2,\rho}(\mathbb{T}_{\infty})$. Based on our assumptions,
		the FBS$\bigtriangleup $E \eqref{eq:4.31} has a unique solution $(X(\cdot),Y(\cdot))\in N^{2,\rho}(\mathbb{T}_{\infty};\mathbb{R}^{2n} )$. Indeed, we have established a mapping
		\begin{equation}
			(X(\cdot),Y(\cdot))=\mathcal{T}_{\alpha _0+\delta }\big ((x(\cdot),y(\cdot))\big ): N^{2,\rho}(\mathbb{T}_{\infty};\mathbb{R}^{2n} )\to N^{2,\rho}(\mathbb{T}_{\infty};\mathbb{R}^{2n} ).\nonumber
		\end{equation}
		Below, we will prove that the mapping mentioned above is contractive when $\delta$ is sufficiently small.
		
		Let $(x(\cdot),y(\cdot) ),(\bar{x}(\cdot),\bar{y}(\cdot))\in N^{2,\rho}(\mathbb{T}_{\infty};\mathbb{R}^{2n} )$ and $(X(\cdot),Y(\cdot) )=\mathcal{T}_{\alpha _0+\delta }\big (x(\cdot),y(\cdot)\big )$, $(\bar{X}(\cdot),\bar{Y}(\cdot))=\mathcal{T}_{\alpha _0+\delta }\big (\bar{x}(\cdot),\bar{y}(\cdot)\big )$. Likewise, we denote $\widehat{x}(\cdot)=x(\cdot)-\bar{x}(\cdot),\widehat{y }(\cdot)=y(\cdot)-\bar{y}(\cdot),\widehat{X }(\cdot)=X(\cdot)-\bar{X}(\cdot),\widehat{Y }(\cdot)=Y(\cdot)-\bar{Y}(\cdot)$, etc. By making use of Lemma \ref{lem:4.2}, we obtain
		\begin{equation}
			\begin{aligned}
				&\quad\|\widehat{X}(\cdot)\|_{\rho}^2+\|\widehat{Y}(\cdot)\|_{\rho}^2\\
				&\le K\delta ^2\mathbb{E}\Bigg \{
				\Big|\Lambda(y_0)-\Lambda(\bar{y}_0)-\mu M^\top M\widehat{y_0}    \Big|^2 +\sum_{k=0}^{\infty}e^{-\rho k}\bigg |b(k,\theta_k)-b(k, \bar{\theta}_k )+\mu \big[B_k^\top Q(k,\widehat y_{k+1}',\widehat z_{k+1}')\big]\bigg |^2\\
				&\quad +\sum_{k=0}^{\infty}e^{-\rho k}\bigg | \sigma(k,\theta_k)-\sigma(k, \bar{\theta}_k)+\mu \big[C_k^\top Q(k,\widehat y_{k+1}',\widehat z_{k+1}')\big]\bigg |^2\\
				&\quad+\sum_{k=0}^{\infty}e^{-\rho (k+1)}\bigg|f(k+1,\theta_k)-f(k+1, \bar{\theta}_k )+\nu\big[A_k^\top P(k,\widehat{x}_k)\big] \bigg|^2
				\Bigg\}.\nonumber
			\end{aligned}
		\end{equation}
		Because of  the Lipschitz continuity of $(\Lambda,\Gamma)$ and the boundedness of $M,A(\cdot),B(\cdot),C(\cdot)$, there exists a new constant $K'>0$ which is not dependent on $\alpha_{0}$ and $\delta$ such that
		\begin{equation}
			\|(\widehat{X}(\cdot),\widehat{Y}(\cdot)) \|^2_{N^{2,\rho}(\mathbb{T}_{\infty};\mathbb{R}^{2n} )}\le K'\delta ^2 \|(\hat{x}(\cdot),\hat{y}(\cdot)) \|^2_{N^{2,\rho}(\mathbb{T}_{\infty};\mathbb{R}^{2n} )}.\nonumber
		\end{equation}
		By choosing $\delta_0=1/(2\sqrt{K'})$, when $\delta\in(0,\delta_0]$, we can find that the mapping $\mathcal{T}_{\alpha _0+\delta }$ is contractive. Consequently, the mapping $\mathcal{T}_{\alpha _0+\delta }$ has a unique fixed point,  which precisely is the unique solution to the FBS$\bigtriangleup $E \eqref{eq:4.10}. The proof is thus completed.
	\end{proof}
	We make a summary of the above analysis so as to provide the following proof.
	\begin{proof}[Proof of Theorem \ref{thm:4.1}]
		To begin with, the  solvability of FBS$\bigtriangleup $E \eqref{eq:1.1} in the space $N^{2,\rho}(\mathbb{T}_{\infty};\mathbb{R}^{2n} )$ is inferred from the unique solvability of FBS$\bigtriangleup $E \eqref{eq:4.12} and Lemma \ref{lem:4.3}. Secondly, within Lemma \ref{lem:4.2}, we get hold of the estimate \eqref{eq:4.8} in Theorem \ref{thm:4.1}. Finally, through selecting the coefficients $\big(\bar{\Lambda},\bar{\Gamma}\big)=(0,0)$, we obtain \eqref{eq:4.6} from \eqref{eq:4.8}. Regarding the uniqueness of the solution of FBS$\bigtriangleup $E \eqref{eq:1.1} , it can be obtained straightforwardly from  a prior estimate (\ref{eq:4.8}). And thus, the proof is completed.
	\end{proof}
	\section{Applications in linear quadratic problem}\label{sec:5}
	The objective of this section is to demonstrate the existence and uniqueness of solutions for stochastic Hamiltonian systems associated with specific types of linear-quadratic (LQ) optimal control problems. If it can be demonstrated that these stochastic Hamiltonian systems comply with the previously outlined domination-monotonicity conditions, then, in accordance with Theorem \ref{thm:4.1}, it can be concluded that their solutions exist and are unique. The primary incentive that led to this paper's study on the fully coupled infinite horizon FBS$\bigtriangleup $E (\ref{eq:1.1}) was, in reality, to figure out the solvability of these Hamiltonian systems.
	\subsection{Forward LQ stochastic control problem}\label{sec:5.1}
	
	In this section, we take into account the following linear forward S$\bigtriangleup $E control system:
	
	\begin{equation}\label{eq:5.1}
		\left\{\begin{aligned}
			x_{k+1}&=A_k x_k+B_k u_k+b_k+\left(C_k x_k+D_k u_k+\sigma_k\right) \omega_k, \quad k \in \mathbb{T}_{\infty} ,\\
			x_0&=\xi ,
		\end{aligned}\right.
	\end{equation}
	where $A(\cdot),C(\cdot) \in L_{\mathbb{F}}^{\infty}(\mathbb{T}_{\infty}; \mathbb{R}^{n\times n}),\ B(\cdot),D(\cdot) \in L_{\mathbb{F}}^{\infty}(\mathbb{T}_{\infty}; \mathbb{R}^{n\times m}),\  b(\cdot),\sigma(\cdot)\in L_{\mathbb{F}}^{2,\rho}\left(\mathbb{T}_{\infty}; \mathbb{R}^{n}\right). $  The vector $\xi \in L^{2} _{\mathcal{F}_{-1} }(\Omega ; \mathbb{R}^{n})$  is referred to as an initial state.  Here the stochastic process  $u(\cdot)=\left(u_{0}, u_{1}, u_{2}, \ldots\right)\in \mathbb{U}$ is called  an admissible control. Let $\rho$ be a constant satisfying
	\begin{equation}\label{eq:6.2}
		\rho>ln\left(4\left(\left\|A(\cdot)\right\|_{L_{\mathbb{F}}^{\infty}(\mathbb{T}_{\infty}; \mathbb{R}^{n\times n})}+\left\|C(\cdot)\right\|_{L_{\mathbb{F}}^{\infty}(\mathbb{T}_{\infty}; \mathbb{R}^{n\times n})}  \right)^{2}\right).
	\end{equation}
	According to Theorem \ref{thm:2.1}, when $\rho$ satisfies \eqref{eq:6.2}, S$\bigtriangleup $E (\ref{eq:5.1}) admits a unique solution $x(\cdot)\in L_{\mathbb{F}}^{2,\rho}\left(\mathbb{T}_{\infty}; \mathbb{R}^n\right)$.
	
	In the meantime, we put forth a quadratic criterion functional:
	\begin{equation}\label{eq:5.2}
		J\left(0 ; \xi, u(\cdot)\right)=\frac{1}{2} \mathbb{E} \left\{ \left\langle M \xi, \xi\right\rangle+\sum_{k=0}^{\infty}e^{-\rho k}\bigg[\left\langle Q_k x_k, x_k\right\rangle+\left\langle R_k u_k, u_k\right\rangle\bigg]\right\},
	\end{equation}
	where $M\in \mathbb{S}^n ,
	Q(\cdot) \in  L_{\mathbb{F}}^{\infty}(\mathbb{T}_{\infty}; \mathbb{S}^{n}) $ and  $R(\cdot) \in L_{\mathbb{F}}^{\infty}(\mathbb{T}_{\infty}; \mathbb{S}^{m})$.
	
	Next, we give the principal  questions to be addressed as follows:
	
	\textbf{Problem(FLQ).} The problem is to find a pair of initial state and admissible control $(\bar \xi, \bar u(\cdot)) \in  L^{2} _{\mathcal{F}_{-1} }(\Omega ; \mathbb{R}^{n}) \times \mathbb{U} $ such that

	\begin{equation}\label{eq:5.3}
		J(0; \bar\xi,  \bar{u}(\cdot))=\inf _{(\xi, u(\cdot)) \in L^{2} _{\mathcal{F}_{-1} }(\Omega ; \mathbb{R}^{n}) \times \mathbb{U}} J\left(0; \xi, u(\cdot)\right).
	\end{equation}
	Problem(FLQ) is considered to be  solvable if there exists $(\bar\xi,  \bar{u}(\cdot))$ above such that infimum of ceritrion function is achieved. In this case,  $\bar{u}(\cdot)$ is referred to as an  optimal control and $\bar{\xi}$ is called an optimal initial state.
	
	In addition, we put forward the following assumption:

	\begin{ass}\label{ass:5.1}
		(i) $M$ is positive definite;\\
		(ii) for any $(\omega ,k)\in \Omega \times  \mathbb{T}_{\infty}$, $Q_k$ is nonnegative definite, where $Q_0=0$;\\
		(iii) for any $(\omega ,k)\in \Omega \times  \mathbb{T}_{\infty}$, there exists a positive    constant $\delta$ such that  $R_k - \delta I_m \ge 0$ .
	\end{ass}
	
	With regard to the admissible control $(\bar \xi, \bar u(\cdot)) \in  L^{2} _{\mathcal{F}_{-1} }(\Omega ; \mathbb{R}^{n}) \times \mathbb{U} ,$ we introduce the stochastic Hamiltonian system of S$\bigtriangleup $E (\ref{eq:5.1}) as shown below:
	\begin{equation} \label{eq:5.4}
		\left\{\begin{aligned}
			\bar{x}_{k+1}=&A_k \bar{x}_k+B_k \bar{u}_k+b_k+\left\{C_k \bar{x}_k+D_k \bar{u}_k+\sigma_k\right\} \omega_k, \\
			\bar{y}_k=&e^{-\rho}A_k^{\top} \bar{y}_{k+1}^{\prime}+e^{-\rho}C_k^{\top} \bar{z}_{k+1}^{\prime}+Q_k \bar{x}_k, \quad k \in \mathbb{T}_{\infty}, \\
			\bar{x}_0=&-M^{-1} \bar{y}_0  ,\\
			0=&e^{-\rho}B_k^{\top} \bar{y}_{k+1}^{\prime}+e^{-\rho}D_k^{\top} \bar{z}_{k+1}^{\prime}+ R_k\bar{u}_k .
		\end{aligned}\right.
	\end{equation}
	Evidently, the Hamiltonian system \eqref{eq:5.4} is characterized by a FBS$\bigtriangleup $E that meets Assumptions \ref{ass:4.1}-\ref{ass:4.2}.
	\begin{rmk}
		Similar to the analysis on S$\bigtriangleup $E (\ref{eq:5.1}), we can also set a condition
		\begin{equation}\label{eq:6.6}
			\rho>ln\left(6\left(\left\|A(\cdot)\right\|_{L_{\mathbb{F}}^{\infty}(\mathbb{T}_{\infty}; \mathbb{R}^{n\times n})}+\left\|C(\cdot)\right\|_{L_{\mathbb{F}}^{\infty}(\mathbb{T}_{\infty}; \mathbb{R}^{n\times n})}  \right)^{2}\right).
		\end{equation}
		Then, by Theorem \ref{thm:2.2}, the BS$\bigtriangleup $E in Hamiltonian system \eqref{eq:5.4} admits a unique solution $y(\cdot)\in L_{\mathbb{F}}^{2,\rho}\left(\mathbb{T}_{\infty}; \mathbb{R}^n\right)$.
	\end{rmk}
	In order to comply the condition in Theorem \ref{thm:4.1}, we can let \eqref{eq:6.6} hold,
	so that  \eqref{eq:6.2} also holds.
	By making use of Theorem \ref{thm:4.1}, we can obtain the following Theorem.
	\begin{thm}\label{thm:5.1}
		Let Assumption \ref{ass:5.1} and condition \eqref{eq:6.6} hold.  Then the Hamiltonian system \eqref{eq:5.4} admits a unique solution $(\bar{x}(\cdot),\bar{y}(\cdot)) \in N^{2,\rho}(\mathbb{T}_{\infty} ;\mathbb{R}^{2n} )$. Moreover,\\
		\begin{equation}\label{eq:5.5}
			\bar{\xi}=-M^{-1} \bar{y}_0
		\end{equation}
		is the unique optimal initial state and
		\begin{equation}\label{eq:5.6}
			\bar{u}_k=-R_k^{-1}\left(e^{-\rho}B_k^{\top} \bar{y}_{k+1}^{\prime}+e^{-\rho}D_k^{\top} \bar{z}_{k+1}^{\prime}\right)
		\end{equation}
		is the unique optimal control of Problem (FLQ).
	\end{thm}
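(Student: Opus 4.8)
The plan is to reduce the Hamiltonian system \eqref{eq:5.4} to an FBS$\triangle$E of the form \eqref{eq:1.1}, verify that it obeys the domination--monotonicity conditions so that Theorem \ref{thm:4.1} applies, and then run a completion-of-squares argument to identify the resulting solution with the unique minimizer of $J$. Since $R_k-\delta I_m\ge0$, each $R_k$ is invertible with $\|R_k^{-1}\|\le\delta^{-1}$, so the stationarity relation in \eqref{eq:5.4} solves for the control as in \eqref{eq:5.6}; substituting it into the forward and backward equations eliminates $\bar u$ and turns \eqref{eq:5.4} into an FBS$\triangle$E $(\Lambda,\Gamma)$ with $\Lambda(y)=-M^{-1}y$, $f(k+1,\theta)=-\bigl[Q_k x+e^{-\rho}A_k^{\top}y'+e^{-\rho}C_k^{\top}z'\bigr]$, $b(k,\theta)=A_k x-B_kR_k^{-1}(e^{-\rho}B_k^{\top}y'+e^{-\rho}D_k^{\top}z')+b_k$ and $\sigma(k,\theta)=C_k x-D_kR_k^{-1}(e^{-\rho}B_k^{\top}y'+e^{-\rho}D_k^{\top}z')+\sigma_k$, where $\theta=(x,y',z')$. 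All coefficients are affine with $L^{\infty}$ slopes (using that $M^{-1},R^{-1}$ are bounded) and $L^{2,\rho}_{\mathbb{F}}$ affine terms, so Assumption \ref{ass:4.1} holds, and condition \eqref{eq:6.6} places $\rho$ in the admissible range of Theorem \ref{thm:4.1} as noted in the preceding remark.

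Next I verify Assumption \ref{ass:4.2} in Case~1, taking $\mu=1$, $\nu=0$ (so the $A(\cdot)$-process and all $\nu$-terms are vacuous), the dominating matrix $M^{-1/2}$, and the matrix processes $e^{-\rho}R_k^{-1/2}B_k^{\top}$, $e^{-\rho}R_k^{-1/2}D_k^{\top}$ playing the roles of $M$, $B(\cdot)$, $C(\cdot)$ in \eqref{eq:4.3}. The decisive computation is the pairing $\langle\varUpsilon(k,\theta)-\varUpsilon(k,\bar\theta),\widehat\theta\rangle$: the skew cross-terms coupling $\widehat x$ with $\widehat y'$ and $\widehat z'$ (namely $\pm e^{-\rho}\langle A_k\widehat x,\widehat y'\rangle$ and $\pm e^{-\rho}\langle C_k\widehat x,\widehat z'\rangle$) cancel, and what remains is
\[
\langle\varUpsilon(k,\theta)-\varUpsilon(k,\bar\theta),\widehat\theta\rangle
=-\langle Q_k\widehat x,\widehat x\rangle-\langle R_k^{-1}w,w\rangle,
\qquad w:=e^{-\rho}\bigl(B_k^{\top}\widehat y'+D_k^{\top}\widehat z'\bigr).
\]
Since $Q_k\ge0$, this is $\le-\langle R_k^{-1}w,w\rangle=-\bigl|e^{-\rho}R_k^{-1/2}(B_k^{\top}\widehat y'+D_k^{\top}\widehat z')\bigr|^2$, which is exactly the monotonicity bound, while $\langle\Lambda(y)-\Lambda(\bar y),\widehat y\rangle=-\langle M^{-1}\widehat y,\widehat y\rangle=-|M^{-1/2}\widehat y|^2$. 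The domination inequalities \eqref{eq:4.2} for $\Lambda,b,\sigma$ hold because $M,R$ are positive definite and bounded, so their square roots and inverses are comparable to the identity up to the constant $\mathbb{K}$ permitted by Remark \ref{rmk:4.1}(i). Hence Theorem \ref{thm:4.1} gives a unique $(\bar x(\cdot),\bar y(\cdot))\in N^{2,\rho}(\mathbb{T}_{\infty};\mathbb{R}^{2n})$; recovering $\bar u(\cdot)$ from \eqref{eq:5.6} lands in $\mathbb{U}$ because $\mathbb{E}|\bar y'_{k+1}|^2,\mathbb{E}|\bar z'_{k+1}|^2\le\mathbb{E}|\bar y_{k+1}|^2$ and $R^{-1}$ is bounded, and we set $\bar\xi=\bar x_0=-M^{-1}\bar y_0$.

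For optimality, take any $(\xi,u(\cdot))\in L^{2}_{\mathcal{F}_{-1}}(\Omega;\mathbb{R}^n)\times\mathbb{U}$ with state $x(\cdot)$ and, using the symmetry of $M,Q_k,R_k$, expand $J(0;\xi,u)-J(0;\bar\xi,\bar u)=\mathcal{L}+\mathcal{Q}$, where
\[
\mathcal{Q}=\tfrac12\mathbb{E}\Bigl\{\langle M(\xi-\bar\xi),\xi-\bar\xi\rangle+\textstyle\sum_k e^{-\rho k}\bigl[\langle Q_k(x_k-\bar x_k),x_k-\bar x_k\rangle+\langle R_k(u_k-\bar u_k),u_k-\bar u_k\rangle\bigr]\Bigr\}
\]
and $\mathcal{L}$ is the cross term. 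The variational state $x^1:=x-\bar x$ solves the homogeneous forward equation driven by $u-\bar u$ (the $b,\sigma$ terms cancel). Applying the discrete summation-by-parts identity of \eqref{eq:4.17}--\eqref{eq:4.19} to $\bar y$ against $x^1$, together with the adjoint equation in \eqref{eq:5.4}, reduces $\mathcal{L}$ to
\[
\mathcal{L}=\mathbb{E}\langle M\bar\xi+\bar y_0,\xi-\bar\xi\rangle+\sum_k e^{-\rho k}\mathbb{E}\bigl\langle e^{-\rho}B_k^{\top}\bar y'_{k+1}+e^{-\rho}D_k^{\top}\bar z'_{k+1}+R_k\bar u_k,\,u_k-\bar u_k\bigr\rangle,
\]
and both factors vanish by \eqref{eq:5.5}--\eqref{eq:5.6}, so $\mathcal{L}=0$. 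As $M\ge\lambda_{\min}(M)I>0$, $Q_k\ge0$ and $R_k\ge\delta I$, we get $\mathcal{Q}\ge\tfrac12\bigl(\lambda_{\min}(M)\mathbb{E}|\xi-\bar\xi|^2+\delta\|u-\bar u\|_{\rho}^2\bigr)\ge0$, with equality iff $(\xi,u)=(\bar\xi,\bar u)$. Thus $(\bar\xi,\bar u)$ is the unique optimal pair. The main obstacle is the duality step for $\mathcal{L}$: it requires the conditional-expectation bookkeeping against the martingale differences exactly as in \eqref{eq:4.19}, and the justification that the boundary term $e^{-\rho N}\langle x_N-\bar x_N,\bar y_N\rangle\to0$, which follows from $(x-\bar x,\bar y)\in N^{2,\rho}(\mathbb{T}_{\infty};\mathbb{R}^{2n})$ just as in the proof of Lemma \ref{lem:4.2}.
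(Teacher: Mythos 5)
Your proposal is correct and follows essentially the same route as the paper: solve the stationarity condition for $\bar u$, substitute to obtain an FBS$\bigtriangleup$E covered by Theorem \ref{thm:4.1}, and then establish optimality and uniqueness via the quadratic expansion of $J$ together with the summation-by-parts duality that kills the cross term. The only difference is that you explicitly verify Assumption \ref{ass:4.2} (Case 1 with $\mu=1$, dominating matrices $M^{-1/2}$, $e^{-\rho}R_k^{-1/2}B_k^{\top}$, $e^{-\rho}R_k^{-1/2}D_k^{\top}$, and the cancellation of the skew cross-terms in $\langle\varUpsilon(k,\theta)-\varUpsilon(k,\bar\theta),\widehat\theta\rangle$), a computation the paper leaves to the reader as ``easy to check.''
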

	\begin{proof}
		
		From the last equation of Hamiltonian system (\ref{eq:5.4}), we get \eqref{eq:5.6}.
		By substituting equation (\ref{eq:5.6}) into the Hamiltonian system (\ref{eq:5.4}), we derive a FBS$\bigtriangleup $E.
		It is easy to check that  the coefficients of this FBS$\bigtriangleup $E satisfy  Assumptions \ref{ass:4.1}-\ref{ass:4.2}, so Theorem \ref{thm:4.1} ensures the existence of a unique solution to this FBS$\bigtriangleup $E, which in turn guarantees a unique solution to the Hamiltonian system \eqref{eq:5.4}.
		
		Take into account $(\bar \xi, \bar u(\cdot))\in L^{2} _{\mathcal{F}_{-1} }(\Omega ; \mathbb{R}^{n}) \times \mathbb{U}$ that satisfies conditions (\ref{eq:5.5}) and (\ref{eq:5.6}). At present, we will prove the optimality as well as the uniqueness of the pair (\ref{eq:5.5}) and (\ref{eq:5.6}). We initiate our proof by establishing the existence of an optimal initial state and an optimal control.  Let $(\xi,u(\cdot)) \in L^{2} _{\mathcal{F}_{-1} }(\Omega ; \mathbb{R}^{n}) \times \mathbb{U}$ be an arbitrary set of initial states and admissible controls, and let $x(\cdot)$ denotes the state process corresponding to $(\xi,u(\cdot))$. We will then proceed to analyze the difference in the criterion function between these two different states:
		\begin{equation} \label{eq:5.8}
			\begin{aligned}
				& J\left(0 ; \xi, u(\cdot)\right)-J(0;\bar{\xi}, \bar{u}(\cdot)) \\
				=&\frac{1}{2} \mathbb{E} \left \{\left\langle M \xi, \xi \right\rangle-\langle M \bar{\xi}, \bar{\xi}\rangle+\sum_{k=0}^{\infty}e^{-\rho k}\bigg[\left\langle Q_k x_k, x_k\right\rangle-\left\langle Q_k \bar{x}_k, \bar{x}_k\right\rangle+\left\langle R_k u_k, u_k\right\rangle-\left\langle R_k \bar{u}_k, \bar{u}_k\right\rangle\bigg]\right\} \\
				=&\frac{1}{2} \mathbb{E} \left \{\left\langle M\left(\xi-\bar{\xi} \right),\xi-\bar{\xi} \right\rangle+\sum_{k=0}^{\infty}e^{-\rho k}\bigg[\left\langle Q_k\left(x_k-\bar{x}_k\right), x_k-\bar{x}_k\right\rangle+\left\langle R_k\left(u_k-\bar{u}_k\right), u_k-\bar{u}_k\right\rangle\bigg]\right\}+\Delta_1,
			\end{aligned}
		\end{equation}
		where
		
		\begin{equation}\label{eq:5.9}
			\begin{aligned}
				\Delta_1=\mathbb{E} \Bigg \{  \left\langle M \bar{\xi}, \xi-\bar{\xi} \right\rangle
				+\sum_{k=0}^{\infty}e^{-\rho k}\left[\left\langle Q_k \bar{x}_k, x_k-\bar{x}_k\right\rangle+\left\langle R_k \bar{u}_k, u_k-\bar{u}_k\right\rangle\right] \Bigg \}.
			\end{aligned}
		\end{equation}
		Besides, according to the expression for the initial value in \eqref{eq:5.4}, we find that
		
		\begin{equation} \label{eq:5.10}
			\begin{aligned}
				&\mathbb{E} \Bigg \{  \left\langle M \bar{\xi}, \xi-\bar{\xi} \right\rangle+e^{-\rho N}\left\langle  \bar{y}_N, x_N-\bar{x}_N\right\rangle\bigg\}
				\\=&\mathbb{E} \Bigg \{e^{-\rho N}\left\langle x_N-\bar{x}_N, \bar{y}_N\right\rangle-\left\langle \xi-\bar{\xi}, \bar{y}_0\right\rangle \Bigg \} \\
				=&   \mathbb{E} \Bigg \{\sum_{k=0}^{N-1}e^{-\rho k}\bigg[e^{-\rho}\left\langle x_{k+1}-\bar{x}_{k+1}, \bar{y}_{k+1}\right\rangle-\left\langle x_k-\bar{x}_k, \bar{y}_k\right\rangle   \bigg]\bigg\}
				\\= &\mathbb{E}\left\{\sum_{k=0}^{N-1}e^{-\rho k}\bigg[e^{-\rho}\left \langle A_k\left(x_k -\bar{x}_k \right)+B_k\left(u_k-\bar{u}_k\right)+\left[ C_k\left(x_k -\bar{x}_k\right)+D_k\left(u_k-\bar{u}_k\right)\right] \omega_k, \bar{y}_{k+1}\right\rangle\right. \\
				& -\left\langle x_k-\bar{x}_k, e^{-\rho}A_k^{\top} \bar{y}_{k+1}^{\prime}+e^{-\rho}C_k^{\top} \bar{z}_{k+1}^{\prime}+Q_k \bar{x}_k\right\rangle \bigg]\Bigg \} \\
				=&\mathbb{E} \Bigg \{ \sum_{k=0}^{N-1}e^{-\rho k}\bigg[\left\langle e^{-\rho}B_k^{\top} \bar{y}_{k+1}^{\prime}+e^{-\rho}D_k^{\top} \bar{z}_{k+1}^{\prime}, u_k-\bar{u}_k\right\rangle-\left\langle Q_k \bar{x}_k , x_k-\bar{x}_k\right\rangle\bigg]\Bigg \}.
		\end{aligned}\end{equation}
		Letting $N\rightarrow\infty$ on both sides of \eqref{eq:5.10}, we have
		\begin{equation} \label{eq:5.11}
			\begin{aligned}
				\mathbb{E} \left\langle M \bar{\xi}, \xi-\bar{\xi} \right\rangle
				=\mathbb{E} \Bigg \{ \sum_{k=0}^{\infty}e^{-\rho k}\bigg[\left\langle e^{-\rho} B_k^{\top} \bar{y}_{k+1}^{\prime}+e^{-\rho}D_k^{\top} \bar{z}_{k+1}^{\prime}, u_k-\bar{u}_k\right\rangle-\left\langle Q_k \bar{x}_k , x_k-\bar{x}_k\right\rangle\bigg]\Bigg \}.
		\end{aligned}\end{equation}
		Substituting \eqref{eq:5.11} into \eqref{eq:5.9}, we obtain that
		\begin{equation}\label{eq:5.12}
			\begin{aligned}
				\Delta_1=& \mathbb{E} \Bigg \{ \sum_{k=0}^{\infty}e^{-\rho k}\left\langle e^{-\rho}B_k^{\top} \bar{y}_{k+1}^{\prime}+e^{-\rho}D_k^{\top} \bar{z}_{k+1}^{\prime}+R_k \bar{u}_k, u_k-\bar{u}_k\right\rangle \Bigg \}=0,
			\end{aligned}
		\end{equation}
		where we have used  $$0=e^{-\rho}B_k^{\top} \bar{y}_{k+1}^{\prime}+e^{-\rho}D_k^{\top} \bar{z}_{k+1}^{\prime}+ R_k\bar{u}_k $$ in  \eqref{eq:5.4}. After that, substituting \eqref{eq:5.12} into \eqref{eq:5.8} and making use of the positive definiteness of the relevant coefficient matrices in Assumption \ref{ass:5.1}, we can easily check that $$J\left(0 ; \xi, u(\cdot)\right)-J\left(0 ; \bar{\xi}, \bar{u}(\cdot)\right) \ge 0.$$  Since ($\xi ,u(\cdot)$) is an arbitrary pair of initial state and admissible control, we obtain the optimality of ($\bar{\xi},\bar{u}(\cdot)$).
		
		Subsequently, we proceed to prove the uniqueness.
		
		Assume that $(\tilde{x}_0, \tilde{v}(\cdot))\in L^{2} _{\mathcal{F}_{-1} }(\Omega ; \mathbb{R}^{n}) \times \mathbb{U}$ denotes another optimal initial state and control pair. The corresponding optimal state process for $(\tilde{x}_0, \tilde{v}(\cdot))$ is denoted by $\tilde{x}(\cdot)$. This means that the condition $J(0 ; \bar{\xi}, \bar{u}(\cdot)) = J(0 ; \tilde{x}_0, \tilde{v}(\cdot))$ holds. By referring back to (\ref{eq:5.8}), we can deduce:
		\begin{equation}\label{eq:5.13}
			\begin{aligned}
				0=& J(0 ; \bar{\xi}, \bar{u}(\cdot)) - J(0 ; \tilde{x}_0, \tilde{v}(\cdot))
				\\=&\frac{1}{2} \mathbb{E} \left \{\left\langle M\left(\bar{\xi}-\tilde{x}_0\right), \bar{\xi}-\tilde{x}_0\right\rangle+\sum_{k=0}^{\infty}e^{-\rho k}\bigg[\left\langle Q_k\left(\bar{x}_k-\tilde{x}_k\right), \bar{x}_k-\tilde{x}_k\right\rangle+\left\langle R_k\left(\bar{u}_k-\tilde{v}_k\right), \bar{u}_k-\tilde{v}_k\right\rangle\bigg]\right\} \\
				\geq& \frac{1}{2} \mathbb{E} \left\{\left\langle M\left(\bar{\xi}-\tilde{x}_0\right), \bar{\xi}-\tilde{x}_0\right\rangle+\sum_{k=0}^{\infty}e^{-\rho k}\bigg[\left\langle R_k\left(\bar{u}_k-\tilde{v}_k\right), \bar{u}_k-\tilde{v}_k\right\rangle\bigg]\right\}.
			\end{aligned}
		\end{equation}
		According to positive definiteness of $M$ and $R_k$, we have $\bar{\xi} = \tilde{x}_0$ and $\bar{u}_k=\tilde{v}_k$. The uniqueness is proved. Thus, the proof is finished.
	\end{proof}
	\begin{rmk}
		The theoretical optimal control law $\bar{u}_{k}$ in Theorem \ref{thm:5.1} decomposes into two components: a state-feedback term and an affine correction term:
		$$
		\bar{u}_{k} = K_{k} x_{k} + \eta_{k}
		$$
		where $K_{k}$ is the feedback gain matrix obtained from Riccati solutions and $\eta_{k}$ is a bias term governed by a BS$\bigtriangleup$E, induced by the system's affine dynamics and noise structure.
		
		The derivation process requires systematic decoupling of the Hamiltonian system through Riccati operators, involving sequential matrix operations. As the primary focus of this work is establishing the solvability and structure of the fully coupled  FBS$\bigtriangleup$E, the explicit formulation of $K_{k}$ and its numerical implementation will be detailed in subsequent publications.
	\end{rmk}
	\subsection{Backward LQ stochastic control problem}\label{sec:5.2}
	
	Within this section, we focus on the following linear BS$\bigtriangleup $E control system:
	
	\begin{equation}
		\begin{aligned} \label{eq:5.14}
			y_k=A_k y'_{k+1}+B_k z'_{k+1}+C_k v_k+\alpha_k,  \quad k\in \mathbb{T}_{\infty}
		\end{aligned}
	\end{equation}
	where $A(\cdot),B(\cdot)\in L_{\mathbb{F}}^{\infty}(\mathbb{T}_{\infty}; \mathbb{R}^{n\times n}),
	C(\cdot)\in L_{\mathbb{F}}^{\infty}(\mathbb{T}_{\infty}; \mathbb{R}^{n\times m}), \alpha(\cdot) \in L_{\mathbb{F}}^{2,\rho}\left(\mathbb{T}_{\infty}; \mathbb{R}^{n}\right). $ The process $v(\cdot)$ is called an admissible control and $v(\cdot)=\left(v_{0}, v_{1},v_{2}, \ldots \right)  \in \mathbb{U}$. Let $\rho$ be a constant satisfying
	\begin{equation}  \label{eq:6.3}
		\begin{aligned}
			\rho<-ln\left(6\left(\left\|A(\cdot)\right\|_{L_{\mathbb{F}}^{\infty}(\mathbb{T}_{\infty}; \mathbb{R}^{n\times n})}+\left\|B(\cdot)\right\|_{L_{\mathbb{F}}^{\infty}(\mathbb{T}_{\infty}; \mathbb{R}^{n\times n})}  \right)^{2}\right).
		\end{aligned}
	\end{equation}
	According to Theorem \ref{thm:2.2}, when $\rho$ satisfies \eqref{eq:6.3}, BS$\bigtriangleup $E (\ref{eq:5.14}) admits a unique solution $y(\cdot)\in L_{\mathbb{F}}^{2,\rho}\left(\mathbb{T}_{\infty}; \mathbb{R}^n\right)$.

	In the meantime, we put forth a quadratic criterion function:
	\begin{equation}  \label{eq:5.15}
		\begin{aligned}
			J(v) =\frac{1}{2} \mathbb{E}\left\{\left\langle M y_0, y_0\right\rangle+\sum_{k=0}^{\infty}e^{-\rho k}\bigg[\left\langle Q_{k} y'_{k+1} ,y'_{k+1}\right\rangle+\left\langle L_k z'_{k+1}, z'_{k+1}\right\rangle+\left\langle R_k v_k, v_k\right\rangle \bigg]\right\},
		\end{aligned}
	\end{equation}
	where $M\in \mathbb{S}^n ,
	Q(\cdot),L(\cdot) \in    L_{\mathbb{F}}^{\infty}(\mathbb{T}_{\infty}; \mathbb{S}^n) $ and  $R(\cdot) \in L_{\mathbb{F}}^{\infty}(\mathbb{T}_{\infty}; \mathbb{S}^m) $.
	
	Next, we give the principal  question to be addressed as follows:
	
	\textbf{Problem (BLQ).} We need to find an admissible control $\bar{v}(\cdot) \in \mathbb{U}$ such that

	\begin{equation}\label{eq:5.16}
		J( \bar{v}(\cdot))=\inf _{v(\cdot) \in \mathbb{U}} J(v(\cdot)).
	\end{equation}
	
	Problem (BLQ) is considered to be solvable if there exists $\bar{v}(\cdot)$ above such that infimum of the criterion function \eqref{eq:5.15} is achieved. In this case, $\bar{v}(\cdot)$ is referred to as an optimal control .
	
	In addition, we put forward the following assumption:
	\begin{ass}\label{ass:5.2}
		(i) $M$ is positive definite;\\
		(ii) for any $(\omega ,k)\in \Omega \times  \mathbb{T}_{\infty}$, $Q_k$ and $L_k$ are nonnegative definite;\\
		(iii) for any $(\omega ,k)\in \Omega \times  \mathbb{T}_{\infty}$, there exists a positive constant $\delta$ such that  $R_k - \delta I_m \ge 0$ .
	\end{ass}
	Hamiltonian system for Problem(BLQ) is presented as follows:
	\begin{equation}\label{eq:5.17}
		\left\{\begin{aligned}
			\bar x_{k+1}&=e^{\rho }A_k^{\top} \bar x_k-e^{\rho}Q_k \bar y'_{k+1}+e^{\rho }\left(B_k^{\top}\bar  x_k-L_k \bar z'_{k+1}\right) \omega_k , \quad k\in
			\mathbb{T}_{\infty} ,\\
			\bar y_k&=A_k \bar y'_{k+1}+B_k \bar z'_{k+1}+C_k \bar v_k+\alpha_k , \\
			\bar x_0&=-M \bar y_0 ,\\
			0&=C_k^{\top} \bar x_k-R_k \bar v_k.
		\end{aligned}\right.
	\end{equation}
	\begin{rmk}
		Similar to before, we can also set a condition
		\begin{equation}\label{eq:7.6}
			\rho<-ln\left(4\left(\left\|A(\cdot)\right\|_{L_{\mathbb{F}}^{\infty}(\mathbb{T}_{\infty}; \mathbb{R}^{n\times n})}+\left\|B(\cdot)\right\|_{L_{\mathbb{F}}^{\infty}(\mathbb{T}_{\infty}; \mathbb{R}^{n\times n})}  \right)^{2}\right).
		\end{equation}
		Then, by Theorem \ref{thm:2.1}, the S$\bigtriangleup $E in Hamiltonian system \eqref{eq:5.17} admits a unique solution $x(\cdot)\in L_{\mathbb{F}}^{2,\rho}\left(\mathbb{T}_{\infty}; \mathbb{R}^n\right)$.
	\end{rmk}
	In order to comply the condition in Theorem \ref{thm:4.1}, we can let \eqref{eq:6.3} hold,
	so that  \eqref{eq:7.6} also holds.
	By making use of Theorem \ref{thm:4.1}, we can obtain the following Theorem.
	\begin{thm}\label{thm:5.2}
		Let Assumption \ref{ass:5.2} and condition \eqref{eq:6.3} hold.  Then the above Hamiltonian system \eqref{eq:5.17} admits a unique solution $(\bar {x}(\cdot),\bar{y}(\cdot)) \in N^{2,\rho}(\mathbb{T}_{\infty} ;\mathbb{R}^{2n} )$. Moreover,
		\begin{equation} \label{eq:5.18}
			\bar v_k=R^{-1}_k C_k^{\top} \bar x_k
		\end{equation}
		is the unique optimal control of Problem (BLQ).
	\end{thm}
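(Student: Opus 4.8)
The plan is to follow the same two-stage strategy used for Problem (FLQ) in Theorem \ref{thm:5.1}: first establish unique solvability of the Hamiltonian system \eqref{eq:5.17} through Theorem \ref{thm:4.1}, and then use a duality (summation-by-parts) argument to show that \eqref{eq:5.18} is indeed the unique optimal control. First I would dispose of the stationarity condition. Since $R_k-\delta I_m\ge0$ in Assumption \ref{ass:5.2}-(iii) forces $R_k$ to be invertible with uniformly bounded inverse, the last relation $0=C_k^{\top}\bar x_k-R_k\bar v_k$ of \eqref{eq:5.17} yields exactly \eqref{eq:5.18}. Substituting $\bar v_k=R_k^{-1}C_k^{\top}\bar x_k$ back into the first three equations of \eqref{eq:5.17} produces a fully coupled FBS$\bigtriangleup$E in $(\bar x(\cdot),\bar y(\cdot))$ alone. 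I would then verify that its coefficients satisfy Assumptions \ref{ass:4.1}-\ref{ass:4.2}: the Lipschitz bounds follow from the boundedness of $A(\cdot),B(\cdot),C(\cdot),Q(\cdot),L(\cdot)$ and of $R_k^{-1}$, while the domination-monotonicity structure is inherited from the positive (semi)definiteness of $M$, $Q_k$, $L_k$, $R_k$, since the system was built to meet these conditions. Because \eqref{eq:6.3} forces \eqref{eq:7.6} as well, the hypotheses of Theorem \ref{thm:4.1} hold, so that theorem delivers a unique $(\bar x(\cdot),\bar y(\cdot))\in N^{2,\rho}(\mathbb{T}_{\infty};\mathbb{R}^{2n})$, hence a unique solution of \eqref{eq:5.17}.

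For optimality, I would take an arbitrary $v(\cdot)\in\mathbb{U}$ with associated state $y(\cdot)$ solving \eqref{eq:5.14}, set $\widehat y_k=y_k-\bar y_k$, $\widehat v_k=v_k-\bar v_k$ (so that $\alpha_k$ cancels and $\widehat y_k=A_k\widehat y'_{k+1}+B_k\widehat z'_{k+1}+C_k\widehat v_k$), and expand the quadratic cost as
\[
J(v(\cdot))-J(\bar v(\cdot))=\frac12\mathbb{E}\Big\{\langle M\widehat y_0,\widehat y_0\rangle+\sum_{k=0}^{\infty}e^{-\rho k}\big[\langle Q_k\widehat y'_{k+1},\widehat y'_{k+1}\rangle+\langle L_k\widehat z'_{k+1},\widehat z'_{k+1}\rangle+\langle R_k\widehat v_k,\widehat v_k\rangle\big]\Big\}+\Delta_1,
\]
where $\Delta_1$ collects the cross terms $\mathbb{E}\{\langle M\bar y_0,\widehat y_0\rangle+\sum_k e^{-\rho k}[\langle Q_k\bar y'_{k+1},\widehat y'_{k+1}\rangle+\langle L_k\bar z'_{k+1},\widehat z'_{k+1}\rangle+\langle R_k\bar v_k,\widehat v_k\rangle]\}$.

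The crux is to prove $\Delta_1=0$. To this end I would apply summation by parts to $e^{-\rho k}\langle\bar x_k,\widehat y_k\rangle$ over $\mathbb{T}_N$, using the forward adjoint equation for $\bar x$ in \eqref{eq:5.17}: the discount weight $e^{-\rho(k+1)}$ exactly cancels the factor $e^{\rho}$ carried by the $\bar x$-dynamics, and conditioning on $\mathcal{F}_{k-1}$ together with $\mathbb{E}[\omega_k\mid\mathcal{F}_{k-1}]=0$, $\mathbb{E}[\omega_k^2\mid\mathcal{F}_{k-1}]=1$ turns $\widehat y_{k+1}$ and $\widehat y_{k+1}\omega_k$ into $\widehat y'_{k+1}$ and $\widehat z'_{k+1}$. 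After the $A_k^{\top}\bar x_k$ and $B_k^{\top}\bar x_k$ contributions cancel against the backward dynamics of $\widehat y_k$, and upon using the initial coupling $\bar x_0=-M\bar y_0$, this telescoping identity reduces $\Delta_1$ to $\sum_{k=0}^{\infty}e^{-\rho k}\mathbb{E}\langle R_k\bar v_k-C_k^{\top}\bar x_k,\widehat v_k\rangle$, which vanishes by the stationarity condition $C_k^{\top}\bar x_k-R_k\bar v_k=0$ in \eqref{eq:5.17}. Hence $J(v(\cdot))-J(\bar v(\cdot))\ge0$ by Assumption \ref{ass:5.2}.

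The step I expect to be the main obstacle is precisely this telescoping/duality computation, because the forward adjoint equation carries the factors $e^{\rho}$ that must be matched against the discounting weights $e^{-\rho k}$, the passage from $\widehat y_{k+1}$ to the conditional variables $\widehat y'_{k+1},\widehat z'_{k+1}$ must be tracked carefully, and the boundary term must be shown to vanish; the latter follows from $\bar x(\cdot),\widehat y(\cdot)\in L^{2,\rho}_{\mathbb{F}}$ via $e^{-\rho N}|\langle\bar x_N,\widehat y_N\rangle|\le e^{-\rho N}(|\bar x_N|^2+|\widehat y_N|^2)\to0$. Once $\Delta_1=0$ is secured, uniqueness is routine: if $\tilde v(\cdot)$ is another optimal control with state $\tilde y(\cdot)$, then $J(\bar v(\cdot))-J(\tilde v(\cdot))=0$ forces the nonnegative quadratic form to vanish, and the positive definiteness of $M$ and $R_k$ (Assumption \ref{ass:5.2}-(i),(iii)) yields $\bar y_0=\tilde y_0$ and $\bar v_k\equiv\tilde v_k$, establishing that \eqref{eq:5.18} is the unique optimal control.
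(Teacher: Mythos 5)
Your proposal is correct and follows essentially the same route as the paper's own proof: extract \eqref{eq:5.18} from the stationarity relation, substitute it into \eqref{eq:5.17} to obtain an FBS$\bigtriangleup$E covered by Theorem \ref{thm:4.1}, then establish optimality by expanding the cost difference into a nonnegative quadratic form plus a cross term, which is killed by the same summation-by-parts/duality identity (with the vanishing boundary term $e^{-\rho N}\langle\bar x_N,\widehat y_N\rangle$ and the stationarity condition), and finally derive uniqueness from the positive definiteness of $M$ and $R_k$. No substantive differences from the paper's argument.
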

	\begin{proof}
		From the final relationship in \eqref{eq:5.17}, we get the relationship (\ref{eq:5.18}). Upon substituting (\ref{eq:5.18}) into the Hamiltonian system (\ref{eq:5.17}), an FBS$\bigtriangleup $E is derived. Then, it is easy to see that the coefficients of this FBS$\bigtriangleup $E satisfy Assumptions \ref{ass:4.1}-\ref{ass:4.2}. Consequently, according to Theorem \ref{thm:4.1}, there exists a unique solution $(\bar x(\cdot),\bar y(\cdot) ) \in N^{2,\rho}(\mathbb{T}_{\infty} ;\mathbb{R}^{2n} )$ for this FBS$\bigtriangleup $E. This indicates that there is a unique solution to the Hamiltonian system (\ref{eq:5.17}).
		
		Now we start to prove  (\ref{eq:5.18}) is an unique optimal control.
		Suppose $v(\cdot) \in  \mathbb{U} $ is any given  admissible controls, and let $ y(\cdot) $ denotes the corresponding state process associated with $v(\cdot)$. We will then take into account the difference between the cost functionals in these two distinct states:
		\begin{equation} \label{eq:5.19}
			\begin{aligned}
				& J( v(\cdot))-J( \bar{v}(\cdot)) \\
				& =\frac{1}{2} \mathbb{E}\Bigg\{ \left\langle M y_0, y_0\right\rangle-\left\langle M \bar{y}_0, \bar{y}_0\right\rangle+\sum_{k=0}^{\infty}e^{-\rho k}\bigg[\left\langle Q_k {y}'_{k+1}, {y}'_{k+1}\right\rangle -\left\langle Q_k \bar{y}'_{k+1}, \bar{y}'_{k+1}\right\rangle\\
				& \left. \quad +\left\langle L_k {z}'_{k+1}, {z}'_{k+1}\right\rangle -\left\langle L_k \bar{z}'_{k+1}, \bar{z}'_{k+1}\right\rangle+\left\langle R_k v_k, v_k\right\rangle -\left\langle R_k \bar{v}_k, \bar{v}_k\right\rangle\bigg]\right\} \\
				& =\frac{1}{2}\mathbb{E}\Bigg\{\left\langle M\left(y_0-\bar{y}_0\right), y_0-\bar{y}_0\right\rangle +\sum_{k=0}^{\infty}e^{-\rho k}\bigg[\left\langle Q_k\left({y}'_{k+1}-\bar{y}'_{k+1}\right), {y}'_{k+1}-\bar{y}'_{k+1}\right\rangle\\
				&\left. \quad +\left\langle L_k\left({z}'_{k+1}-\bar{z}'_{k+1}\right), {z}'_{k+1}-\bar{z}'_{k+1}\right\rangle + \left\langle R_k(v_k-\bar{v}_k),v_k-\bar{v}_k\right\rangle \bigg]\right\}+\Delta_2,
			\end{aligned}
		\end{equation}
		where
		\begin{equation}\label{eq:5.20}
			\Delta_2=\mathbb{E} \left\{\left\langle M \bar{y}_0, y_0-\bar{y}_0\right\rangle+\sum_{k=0}^{\infty}e^{-\rho k}\left[\left\langle Q_k {\bar{y}}'_{k+1}, {y}'_{k+1}-\bar{y}'_{k+1}\right\rangle+
			\left\langle L_k {\bar{z}}'_{k+1},  {z}'_{k+1}-\bar{z}'_{k+1} \right\rangle+\left\langle R_k \bar{v}_k, v_k-\bar{v}_k\right\rangle\right]\right\}.
		\end{equation}
		Moreover,  from the relationships in (\ref{eq:5.17}), we can get that
		
		\begin{equation} \label{eq:5.21}
			\begin{aligned}
				&\mathbb{E}\left[\left\langle M \bar{y}_0, y_0-\bar{y}_0\right\rangle+e^{-\rho N}\left\langle\bar{x}_N, y_N-\bar{y}_N\right\rangle \right]
				\\=&\mathbb E\bigg\{e^{-\rho N}\left\langle \bar{x}_N, y_N-\bar{y}_N\right\rangle- \left\langle \bar{x}_0, y_0-\bar{y}_0\right\rangle\bigg\}
				\\=& \mathbb E\bigg\{\sum_{k=0}^{N -1}e^{-\rho k}\bigg[e^{-\rho}\left\langle \bar{x}_{k+1}, y_{k+1}-\bar{y}_{k+1}\right\rangle-\left\langle \bar{x}_k, y_{k}-\bar{y}_{k}\right\rangle\bigg]
				\bigg\}\\
				=&\mathbb{E}\bigg\{ \sum_{k=0}^{N-1}e^{-\rho k}\bigg[
				e^{-\rho}\left \langle e^{\rho }A_k^{\top} \bar x_k-e^{\rho }Q_k \bar y'_{k+1}+e^{\rho }\left(B_k^{\top}\bar  x_k-L_k \bar z'_{k+1}\right) \omega_k ,y_{k+1}-\bar{y}_{k+1}\right\rangle\\
				&\quad - \left\langle \bar{x}_k ,A_k (y'_{k+1}-\bar y'_{k+1})+B_k (z'_{k+1}-\bar z'_{k+1})+C_k (v_k-\bar v_k)\right\rangle
				\bigg]\bigg\}\\
				=&\mathbb{E}\bigg\{\sum_{k=0}^{N-1}e^{-\rho k}\bigg[-\left\langle Q_k \bar{y}'_{k+1}, y'_{k+1}-\bar{y}'_{k+1}\right\rangle-\left\langle L_k \bar{z}'_{k+1}, z'_{k+1}-\bar{z}'_{k+1}\right\rangle-\left\langle C_k^{\top} \bar{x}_k, v_k-\bar{v}_k\right\rangle\bigg] \bigg\}\\
				=&\mathbb{E}\bigg\{\sum_{k=0}^{N-1}e^{-\rho k}\bigg[-\left\langle Q_k \bar{y}'_{k+1}, y'_{k+1}-\bar{y}'_{k+1}\right\rangle-\left\langle L_k \bar{z}'_{k+1}, z'_{k+1}-\bar{z}'_{k+1}\right\rangle-\left\langle R_k^{\top} \bar{v}_k, v_k-\bar{v}_k\right\rangle\bigg]\bigg\}.
			\end{aligned}
		\end{equation}
		Let $N\rightarrow\infty$ on both sides of \eqref{eq:5.21}. We have
		\begin{equation} \label{eq:5.22}
			\begin{aligned}
				\mathbb{E}\left[\left\langle M \bar{y}_0, y_0-\bar{y}_0\right\rangle \right]
				=\mathbb{E}\bigg\{\sum_{k=0}^{\infty}e^{-\rho k}\left[-\left\langle Q_k \bar{y}'_{k+1}, y'_{k+1}-\bar{y}'_{k+1}\right\rangle-\left\langle L_k \bar{z}'_{k+1}, z'_{k+1}-\bar{z}'_{k+1}\right\rangle
				-\left\langle R_k^{\top} \bar{v}_k, v_k-\bar{v}_k\right\rangle\right]\bigg\}.
			\end{aligned}
		\end{equation}
		Consequently, $\Delta_2 = 0$. Then, based on \eqref{eq:5.19} and by taking into consideration the positive definiteness of the relevant coefficient matrices in Assumption \ref{ass:5.2}, it can be readily demonstrated that $J( v(\cdot)) - J( \bar{v}(\cdot)) \ge 0$. Since  $v(\cdot)$ is an arbitrary admissible control, this verifies the optimality of $\bar{v}(\cdot)$.

		Now we continue to prove the uniqueness. Suppose that there exists another optimal control  $\tilde{u}(\cdot) \in \mathbb{U}$ aside from $\bar{v}(\cdot)$. Let $\tilde{y}(\cdot)$ be the state process corresponding to $\tilde{u}(\cdot)$. This means that $J(\tilde{u}(\cdot)) = J(\bar{v}(\cdot))$. Then, by referring back to (\ref{eq:5.19}) and \eqref{eq:5.22}, we can infer the following:
		\begin{equation}\label{eq:5.23}
			\begin{aligned}
				0= &J( \tilde{u}(\cdot)) - J( \bar{v}(\cdot))
				\\=&\frac{1}{2}\mathbb{E}\Bigg\{\left\langle M\left(\tilde{y}_0-\bar{y}_0\right), \tilde{y}_0-\bar{y}_0\right\rangle +\sum_{k=0}^{\infty}\bigg[\left\langle Q_k\left(\tilde{y}'_{k+1}-\bar{y}'_{k+1}\right), \tilde{y}'_{k+1}-\bar{y}'_{k+1}\right\rangle\\
				&\left. \quad +\left\langle L_k\left(\tilde{z}'_{k+1}-\bar{z}'_{k+1}\right), \tilde{z}'_{k+1}-\bar{z}'_{k+1}\right\rangle + \left\langle R_k(\tilde{u}_k -\bar{v}_k),\tilde{u}_k -\bar{v}_k\right\rangle \bigg]\right\}\\
				\geqslant & \frac{1}{2} \mathbb{E}\Bigg\{\left\langle M\left(\tilde{y}_0-\bar{y}_0\right), \tilde{y}_0-\bar{y}_0\right\rangle+\sum_{k=0}^{\infty}\bigg[\left\langle R_k( \tilde{u}_k-\bar{v}_k), \tilde{u}_k-\bar{v}_k\right\rangle\bigg]\Bigg\}.
			\end{aligned}
		\end{equation}
		According to positive definiteness of $M$ and $R_k$, we get $\tilde{u}_k=\bar{v}_k$. It has been proved that the uniqueness holds.
	\end{proof}
	
	\section*{Declarations}
	The authors have not disclosed any competing interests.
	\bibliographystyle{amsplain}
	
\end{document}